\crefname{hypothesis}{Hypothesis}{Hypotheses}
\theoremstyle{plain}
\newtheorem{thm}{Theorem}[section]
\theoremstyle{plainNoItalics}
\newtheorem{example}[thm]{Example}
\begin{document}


\title{Non-splitting Eulerian-Lagrangian WENO schemes for two-dimensional nonlinear convection-diffusion equations%
  \thanks{%
\funding{J. Qiu was partially supported by National Key R\&D Program of China
(No. 2022YFA1004500);
 N. Zheng and J.-M. Qiu were supported by the National Science Foundation NSF-DMS-2111253, Air Force
Office of Scientific Research FA9550-22-1-0390, and the Department of Energy DE-SC0023164;
X. Cai was supported by the National Natural Science Foundation (China) (No. 12201052), the Guangdong Provincial Key Laboratory of Interdisciplinary Research and Application for Data Science, BNU-HKBU United International College, project code 2022B1212010006.
 }} }


\author{
Nanyi Zheng%
\thanks{Department of Mathematical Sciences, University of Delaware, Newark, DE, 19716, USA. (\email{nyzheng@udel.edu}).}
\and
Xiaofeng Cai
  \thanks{Research Center for Mathematics, Advanced Institute of Natural Sciences, Beijing Normal University, Zhuhai, 519087, China, and
  Guangdong Provincial Key Laboratory of Interdisciplinary Research and Application for Data Science, BNU-HKBU United International College, Zhuhai, 519087, China. (\email{xfcai@bnu.edu.cn}).}
  \and
  Jing-Mei Qiu%
  \thanks{Department of Mathematical Sciences, University of Delaware, Newark, DE, 19716, USA. (\email{jingqiu@udel.edu}).}
    \and
 Jianxian Qiu%
  \thanks{School of Mathematical Sciences and Fujian Provincial Key Laboratory of Mathematical Modeling and High-Performance Scientific Computing, Xiamen University, Xiamen, Fujian 361005, China. (\email{jxqiu@xmu.edu.cn}).}
}

 \headers{Eulerian-Lagrangian WENO schemes}
{ N. Zheng, X. Cai,  J.-M. Qiu,  J. Qiu}
\maketitle

 \begin{abstract}
 In this paper, we develop high-order, conservative, non-splitting  Eulerian-Lagrangian (EL) Runge-Kutta (RK) finite volume (FV) weighted essentially non-oscillatory (WENO) schemes for convection-diffusion equations. The proposed EL-RK-FV-WENO scheme defines modified characteristic lines and evolves the solution along them, significantly relaxing the time-step constraint for the convection term.
The main algorithm design challenge arises from the complexity of constructing accurate and robust reconstructions on dynamically varying Lagrangian meshes. This reconstruction process is needed for flux evaluations on time-dependent upstream quadrilaterals and time integrations along moving characteristics.
To address this, we propose a strategy that utilizes a WENO reconstruction on a fixed Eulerian mesh for spatial reconstruction, and updates intermediate solutions on the Eulerian background mesh for implicit-explicit  RK temporal integration. This strategy leverages efficient reconstruction and remapping algorithms to manage the complexities of polynomial reconstructions on time-dependent quadrilaterals, while ensuring local mass conservation.
The proposed scheme ensures mass conservation due to the flux-form semi-discretization and the mass-conservative reconstruction on both background and upstream cells. Extensive numerical tests have been performed to verify the effectiveness of the proposed scheme.

 \end{abstract}

\begin{keywords}
convection-diffusion, Eulerian-Lagrangian, modified characteristic lines, WENO reconstruction, mass conservation, varying Lagrangian meshes.
\end{keywords}

\begin{AMS}
  65M25, 65M60, 76M10


\end{AMS}

\section{Introduction}


Simulating convection-diffusion phenomena has a wide range of applications, including fluid dynamics \cite{yang_generalized_2014,lee_multiscale_2016,chai_multiple-distribution-function_2022}, materials science \cite{sorensen_computational_1999,wang_impact_2019,boulais_two-dimensional_2020}, and geophysics \cite{smolarkiewicz_mpdata_1998,singh_advection_2013}. In this paper, we consider a scalar convection-diffusion equation:
\begin{equation}\label{eq:general_conv_diff}
    u_t + \nabla_{\mathbf{x}}\cdot(\mathbf{F}(u,\mathbf{x},t))=\epsilon \Delta u,
\end{equation}
where $\epsilon \geq 0$. Existing methods include the Eulerian \cite{cockburn1998local, hidalgo2011ader} and Lagrangian approaches \cite{cheng2007high,benitez2012numericalI,benitez2012numericalII,benitez2014pure}. The Eulerian approach evolves the equation upon a fixed spatial mesh (or Eulerian mesh), and such methods are usually robust and relatively easy to implement, but they suffer from time-step constraints.  The Lagrangian approach follows characteristics in time evolution by generating a Lagrangian mesh that moves with the velocity field, allowing for a larger time-step size compared with Eulerian schemes. However, the moving Lagrangian mesh can be greatly distorted, leading to significant challenges in analysis and implementation. Between the two approaches, there are the Eulerian-Lagrangian (EL) approach \cite{wang1999ellam,arbogast2010fully,nakao2022eulerian}, the semi-Lagrangian (SL) approach \cite{xiu2001semi,spiegelman2006semi,ding_semi-lagrangian_2020}, and the arbitrary Lagrangian-Eulerian (ALE) approach \cite{braescu2007arbitrary,mackenzie2012unconditionally,zhou2022arbitrary}. Both the EL and SL approaches utilize a fixed background mesh and accurately or approximately track information propagation along characteristics, which helps them ease the numerical time-step constraint. The schemes from the ALE approach consider a dynamically moving mesh. These approaches aim to balance between the Eulerian and Lagrangian approaches in various ways, tailored for better efficiency of computational algorithms in different settings.


In this paper, we continue our development of the EL Runge-Kutta (RK) schemes \cite{cai_eulerian-lagrangian_2021, nakao2022eulerian}, but now in a truly multi-dimensional finite volume (FV) fashion for nonlinear convection-diffusion problems. In the finite volume setting, we need to update only one degree of freedom per cell, as opposed to multiple ones, compared with the previous EL-RK discontinuous Garlerkin (DG) scheme \cite{cai_eulerian-lagrangian_2021}. Building upon the EL-RK framework, we introduce a modified velocity field as a first-order approximation of the analytic velocity field, offering triple benefits: firstly, the modified velocity field has straight characteristic lines, leading to upstream cells with straight edges that are easier to approximate than polygons with curved edges; secondly, tracking characteristics approximately allows a greatly relaxed time-stepping constraint compared with Eulerian methods; thirdly, the EL framework offers flexibility in treating nonlinearity, while integrating diffusion terms, thereby presenting a truly multi-dimensional EL finite volume scheme compared to our earlier work in \cite{ding_semi-lagrangian_2020, nakao2022eulerian}.

Yet new challenges arise in reconstructing high-order polynomials on dynamically varying Lagrangian upstream cells; robust and accurate weighted essentially non-oscillatory (WENO) reconstructions on time-dependent upstream polygons can be computationally complex and expensive. Furthermore, performing high-order time integration along moving characteristics brings new complications in algorithm design. Below, we elaborate major computational roadblocks and our proposed strategy in the following two aspects:
\begin{itemize}
    \item {\em Spatial reconstruction.} The EL RK formulation necessitates flux evaluations at the interface of upstream cells; thus, we need to reconstruct piecewise polynomials on upstream quadrilaterals. Performing WENO reconstruction of polynomials on distorted upstream quadrilaterals, e.g., see the red mesh in \Cref{fig:schematic_remap_poly_b}, can be computationally involved. Further, shapes of these upstream cells differ in every time-step, leading to expensive mesh-dependent local computations. To address such challenges, we propose to (a) perform a robust and efficient WENO reconstruction of piecewise polynomials on the background Eulerian mesh; and (b) leverage a remapping algorithm to compute cell averages on upstream cells from cell averages on the background Eulerian mesh in a mass conservative fashion \cite{lauritzen_conservative_2010,zheng_fourth-order_2022}. Finally, we perform piecewise polynomial reconstruction on upstream quadrilaterals, with preservation of cell averages computed in (b), while utilizing the piecewise polynomials on Eulerian mesh reconstructed in (a) for accuracy consideration.
    \item {\em Implicit-explicit (IMEX) RK temporal integration along linear approximation of characteristics.} A major computational challenge in performing method-of-lines time integrations along moving meshes is the complexity again in polynomial reconstructions of solutions on quadrilateral meshes that are time varying.
    To address this issue, we propose to update intermediate IMEX solutions at the background Eulerian mesh as in \cite{ding_semi-lagrangian_2020, nakao2022eulerian}, for which efficient reconstruction and the remapping algorithms can be utilized to faciliate the polynomial reconstruction on time-dependent quadrilaterals as mentioned above.
\end{itemize}
We emphasize that efficient polynomial reconstruction on a fixed Eulerian mesh serves as a cornerstone in our EL-RK algorithm, upon which polynomial reconstructions on distorted upstream quadrilaterals are performed. Indeed, WENO reconstructions on a background Eulerian mesh have been well developed in the literature \cite{levy2000compact,dumbser_arbitrary_2007,zhu_new_2016,cravero2019optimal,zheng_fourth-order_2022}. In this paper, we further improve upon our previous work \cite{zheng_fourth-order_2022} and propose a new 2D WENO reconstruction. This new approach strikes a good balance between controlling numerical oscillations and achieving optimal accuracy, by optimizing small stencil polynomial approximations and the weighting strategy.


The rest of the paper is organized as follows. \Cref{sec:EL_FV_WENO_scheme} presents the proposed EL-RK-FV-WENO schemes; \Cref{sec:numerical_tests} presents extensive numerical results showcasing the scheme's effectiveness. Finally, we conclude in \Cref{sec:conclusion}.

\section{EL-RK-FV-WENO schemes}\label{sec:EL_FV_WENO_scheme}

In \Cref{sec:first_order_EL_scheme}, we introduce a first-order EL-RK-FV scheme for a linear convection-diffusion equation. Then,  building upon the basic concepts introduced in \Cref{sec:first_order_EL_scheme}, we discuss the construction of high-order EL-RK-FV-WENO schemes in \Cref{sec:high_order_EL_scheme}. Finally, the extension of the proposed EL-RK-FV-WENO scheme to a nonlinear model is presented in \Cref{sec:nonlinear}.

\subsection{First-order EL-RK-FV scheme}\label{sec:first_order_EL_scheme}
Consider
\begin{equation}\label{eq:2D_linear_convection_equation}
	u_t + (a(x,y,t)u )_x + (b(x,y,t)u)_y = \epsilon(u_{xx}+u_{yy}).
\end{equation}
We assume a rectangle computational domain denoted by $\Omega:=[x_L,x_R]\times[y_B,y_T]$ with following partitions for each dimension
$$x_L=x_{\frac12}<x_{\frac32}<\ldots<x_{i-\frac12}<x_{i+\frac12}<\ldots<x_{N+\frac12}=x_R,$$
$$y_B=y_{\frac12}<y_{\frac32}<\ldots<y_{j-\frac12}<y_{j+\frac12}<\ldots<y_{N+\frac12}=y_T$$
with $x_i:=(x_{i-\frac12}+x_{i+\frac12})/2$,  $y_j:=(y_{j-\frac12}+y_{j+\frac12})/2$, $\Delta x_i=x_{i+\frac12}-x_{i-\frac12}$, $\Delta y_j = y_{j+\frac12} - y_{j-\frac12}$, $I_{i}^x:=[x_{i-\frac12},x_{i+\frac12}]$, $I_{j}^y:=[y_{j-\frac12},y_{j+\frac12}]$ and $I_{i,j}:=I_i^x\times I_j^y,\quad \forall i,j$. We define the numerical solutions on the Eulerian mesh as $\{\overline{u}^n_{i,j}\}$, which approximate the averages of the $u(x,y,t^n)$ over the Eulerian cells $\{I_{i,j}\}$, i.e. $\{\frac{1}{|I_{i,j}|}\iint_{I_{i,j}}u(x,y,t^n)dxdy\}$.

To derive an EL-RK-FV formulation, we first define a modified velocity field $(\alpha(x,y,t),\beta(x,y,t))$. The definition of $(\alpha(x,y,t),\beta(x,y,t))$ is summarized as follows.

\begin{enumerate}
	\item At $t=t^{n+1}$, $\alpha(x,y,t^{n+1})$ and $\beta(x,y,t^{n+1})$ belong to $Q^1(I_{i,j})$ satisfying
	\begin{equation}\label{eq:alpha_beta_tnp1}
		\begin{split}
			\alpha(x_{i\pm\frac12},y_{j\pm\frac12},t^{n+1})=a(x_{i\pm\frac12},y_{j\pm\frac12},t^{n+1}),\\
			\beta(x_{i\pm\frac12},y_{j\pm\frac12},t^{n+1})=b(x_{i\pm\frac12},y_{j\pm\frac12},t^{n+1}).
		\end{split}
	\end{equation}
	\item We define a dynamic region (see \Cref{fig:dynamic_region})
	 \begin{equation}\label{eq:dynamic_region}
	 	\widetilde{I}_{i,j}(t):=\{(x,y)|(x,y)=(\widetilde{x}(t;(\xi,\eta,t^{n+1})),\widetilde{y}(t;(\xi,\eta,t^{n+1}))), (\xi,\eta)\in I_{i,j}\},
	 \end{equation}
 	where $(\widetilde{x}(t;(\xi,\eta,t^{n+1})),\widetilde{y}(t;(\xi,\eta,t^{n+1})))$ represents the straight line going through $(\xi,\eta,t^{n+1})$ satisfying
	\begin{equation}\label{eq:modified_characteristics}
		\begin{cases}
			\widetilde{x}(t;(\xi,\eta,t^{n+1})) = \xi + (t-t^{n+1})\alpha(\xi,\eta,t^{n+1}),\\
			\widetilde{y}(t;(\xi,\eta,t^{n+1})) = \eta + (t-t^{n+1})\beta(\xi,\eta,t^{n+1}).
		\end{cases}
	\end{equation}
	We call \eqref{eq:modified_characteristics} a modified characteristic line.
	\item For $t\in[t^n,t^{n+1})$ and $(\widetilde{x}(t;(\xi,\eta,t^{n+1})),\widetilde{y}(t;(\xi,\eta,t^{n+1})))\in \widetilde{I}_{i,j}(t)$,
	\begin{equation}\label{eq:modified_velocity}
		\begin{cases}
			\alpha(\widetilde{x}(t;(\xi,\eta,t^{n+1})),\widetilde{y}(t;(\xi,\eta,t^{n+1}))) = \alpha(\xi,\eta,t^{n+1}),\\
			\beta(\widetilde{x}(t;(\xi,\eta,t^{n+1})),\widetilde{y}(t;(\xi,\eta,t^{n+1}))) = \beta(\xi,\eta,t^{n+1}).
		\end{cases}
	\end{equation}
\end{enumerate}

\begin{figure}[htb]
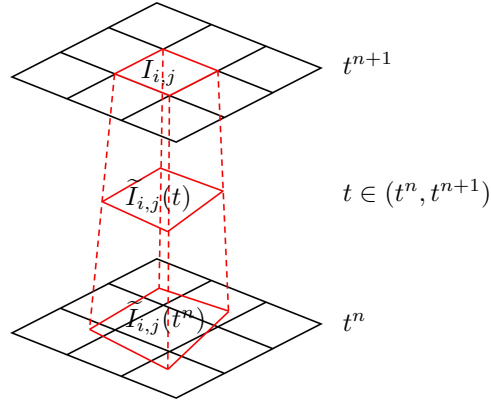

	\begin{center}
		\begin{overpic}[scale=0.3]{Figures//schematic_EL}
			\put(33,81){$I_{i,j}$}
			\put(29,18){$\widetilde{I}_{i,j}(t^n)$}
			\put(29,49){$\widetilde{I}_{i,j}(t)$}
			\put(84,17) {$t^n$}
			\put(84,50) {$t\in(t^n,t^{n+1})$}
			\put(84,82){$t^{n+1}$}
		\end{overpic}
	\end{center}
	\caption{Schematic illustration for the dynamic region $\widetilde{I}_{i,j}(t)$.}
	\label{fig:dynamic_region}
\end{figure}

With the definitions above, we can derive
\begin{equation}\label{EL_FV_formulation_derivation}
	\begin{split}
	&\frac{d}{dt}\iint_{\widetilde{I}_{i,j}(t)}u(x,y,t)dxdy\\
   =&\iint_{\widetilde{I}_{i,j}(t)} u_t(x,y,t)dxdy + \int_{\partial \widetilde{I}_{i,j}(t)}(\alpha,\beta)u\cdot \mathbf{n}ds\\
   =&\iint_{\widetilde{I}_{i,j}(t)} u_t(x,y,t)dxdy + \iint_{\widetilde{I}_{i,j}(t)}\left[(au)_x+(bu)_y\right]dxdy\\
    &-\int_{\partial \widetilde{I}_{i,j}(t)}(a,b)u\cdot\mathbf{n}ds+\int_{\partial \widetilde{I}_{i,j}(t)}(\alpha,\beta)u\cdot \mathbf{n}ds\\
   =&-\int_{\partial \widetilde{I}_{i,j}(t)}(a-\alpha,b-\beta)u\cdot\mathbf{n}ds+\epsilon\iint_{\widetilde{I}_{i,j}(t)}\Delta udxdy.
   \end{split}
\end{equation}
We define that $\mathbf{F}(u,x,y,t):=((a-\alpha)u,(b-\beta)u)$ and provide the concise EL-FV formulation:
\begin{equation}\label{EL_FV_formulation}
    \begin{split}
	\frac{d}{dt}\iint_{\widetilde{I}_{i,j}(t)}u(x,y,t)dxdy &= -\int_{\partial \widetilde{I}_{i,j}(t)}\mathbf{F}(u,x,y,t)\cdot\mathbf{n}ds+\epsilon\iint_{\widetilde{I}_{i,j}(t)}\Delta udxdy\\
        &=:\mathcal{F}_{i,j}(u;t)+\mathcal{G}_{i,j}(u;t).
    \end{split}
\end{equation}

To evaluate the right-hand side (RHS) of \eqref{EL_FV_formulation}, we introduce the following notation for semi-discretization:
\begin{equation}\label{eq:semi_discretization}
    \frac{d\widetilde{u}_{i,j}(t)}{dt}=\widetilde{\mathcal{F}}_{i,j}(\overline{\mathbf{U}};t)+\widetilde{\mathcal{G}}_{i,j}(\overline{\mathbf{U}};t),
\end{equation}
where
\begin{itemize}
    \item the notation $\widetilde{\cdot}$ specifies that the integral value corresponds to the characteristic spatial region $\widetilde{I}_{i,j}(t)$,
    \item $\widetilde{u}_{i,j}(t)$ approximates $\iint_{\widetilde{I}_{i,j}(t)}u(x,y,t)dxdy$,
    \item $\widetilde{\mathcal{F}}_{i,j}(\overline{\mathbf{U}};t)$ approximates $\mathcal{F}_{i,j}(u;t)$,
    \item $\widetilde{\mathcal{G}}_{i,j}(\overline{\mathbf{U}};t)$ approximates $\mathcal{G}_{i,j}(u;t)$,
    \item $\overline{\mathbf{U}}:=\left(\overline{u}_{i,j}(t)\right)_{NxNy}$ represents the finite volumes such that
    \begin{equation}
        \overline{u}_{i,j}(t)\approx\frac{1}{|I_{i,j}|}\iint_{I_{i,j}}u(x,y,t)dxdy.
    \end{equation}
\end{itemize}

Similar to $\overline{\mathbf{U}}$, we can also represent \eqref{eq:semi_discretization} globally as follows:
\begin{equation}\label{eq:vec_semi_discretization}
    \frac{d\widetilde{\mathbf{U}}(t)}{dt}=\bm{\widetilde{\mathcal{F}}} (\overline{\mathbf{U}};t)+\bm{\widetilde{\mathcal{G}}}(\overline{\mathbf{U}};t),
\end{equation}
where $\bm{\widetilde{\mathcal{F}}}:=\left(\widetilde{\mathcal{F}}_{i,j}\right)_{N_xNy}$ and 
$\bm{\widetilde{\mathcal{G}}} :=\left(\widetilde{\mathcal{G}}_{i,j}\right)_{N_xNy}$. Coupling \eqref{eq:vec_semi_discretization}  with the first-order forward-backward Euler IMEX method in \cite{ascher_implicit_explicit_1997} yields the first-order EL-RK-FV scheme:
\begin{align}\label{eq:scheme_IMEX111}
    \mathbf{M}\overline{\mathbf{U}}^{n+1} = \widetilde{\mathbf{U}}^n + \Delta t\bm{\widetilde{\mathcal{F}}}(\overline{\mathbf{U}}^n,t^n)+\Delta t\left(\epsilon\mathbf{M}\mathbf{D}\overline{\mathbf{U}}^{n+1}\right),
\end{align}
where \begin{itemize}
    \item $\mathbf{M}$ is a diagonal matrix such that $\mathbf{M}\overline{\mathbf{U}}^{n+1} = \left(|I_{i,j}|\overline{u}^{n+1}_{i,j}\right)_{N_xN_y}$,
    \item $\widetilde{\mathbf{U}}^n$ approximates $\left(\iint_{\widetilde{I}_{i,j}(t^n)}u(x,y,t^n)dxdy\right)_{N_xN_y}$,
    \item $\mathbf{D}$ is a differential matrix such that
    $$\mathbf{D}\overline{\mathbf{U}}^{n+1} \text{approximates}  \left(\frac{1}{|I_{i,j}|}\iint_{I_{i,j}}\Delta udxdy\right)_{N_xN_y}.$$
\end{itemize}
As shown, the diffusion term in \eqref{eq:scheme_IMEX111} is implicit. Consequently, $\overline{\mathbf{U}}^{n+1}$ is obtained by solving the following linear system:
\begin{equation}
    \mathbf{M}\left(\mathbf{I} - \Delta t\epsilon\mathbf{D}\right)\overline{\mathbf{U}}^{n+1} = \widetilde{\mathbf{U}}^n + \Delta t\bm{\widetilde{\mathcal{F}}}(\overline{\mathbf{U}}^n,t^n).
\end{equation}

In \eqref{eq:scheme_IMEX111}, the methods for approximating the $\widetilde{\mathcal{F}}$ and $\widetilde{\mathcal{G}}$ terms with first-order accuracy are not discussed. High-order spatial approximations for these terms will be introduced in the next section. The first-order approximations can be viewed as simplified versions of their high-order counterparts.


\begin{remark}(Empirical time-step constraint of the convection term for stability)\label{rem:time_step_constraint}
Similar to the flux-form finite volume method in \cite{shu_essentially_1998}, where $\alpha=\beta=0$, we require that
\begin{equation}\label{eq:empirical_time_step_estimate_1}
	\Delta t \leq \frac{1}{\frac{\max|a-\alpha|}{\Delta x} + \frac{\max|b-\beta|}{\Delta y}} = \frac{\Delta x\Delta y}{\Delta y\max|a-\alpha| + \Delta x\max|b-\beta|}
\end{equation}
with $\Delta x := \max\{\Delta x_i\}$ and $\Delta y := \max\{\Delta y_j\}.$
Furthermore, we stipulate that $\widetilde{I}_{i,j}(t^n)$ remains a convex quadrilateral. Otherwise, $\widetilde{I}_{i,j}(t^n)$ might become ill-posed in various situations. Therefore, it suffices to require that any three vertices of $\widetilde{I}_{i,j}(t^n)$ cannot be collinear. In other words,
\begin{equation}\label{eq:empirical_time_step_estimate_2}
	\iint_{\triangle_{l}(\widetilde{I}_{i,j}(t))} dxdy > 0, \quad l\in\{LT,RT,LB,RB\},
\end{equation}
where
\begin{align}
	\triangle_{LT}(\widetilde{I}_{i,j}(t)):=\{(x,y)|(x,y)=(\widetilde{x}(t;(\xi,\eta,t^{n+1})),\widetilde{y}(t;(\xi,\eta,t^{n+1}))),\\
	y_{j-\frac{1}{2}}+(\xi-x_{i-\frac{1}{2}})\Delta y_j/\Delta x_i \leq \eta \leq y_{j+\frac{1}{2}},~\xi\in [x_{i-\frac{1}{2}},x_{i+\frac{1}{2}}] \}.
\end{align}
Similar definitions hold for the other $\{\triangle_{l}(\widetilde{I}_{i,j}(t))\}$. Through tedious derivation and omitting some higher-order terms, we can establish that \eqref{eq:empirical_time_step_estimate_2} implies
\begin{equation}\label{eq:empirical_time_step_estimate_3}
	\Delta t < \frac{1}{|a_x(x_i,y_j,t^{n+1})| + |b_y(x_i,y_j,t^{n+1})|}.
\end{equation}
By combining \eqref{eq:empirical_time_step_estimate_1} with \eqref{eq:empirical_time_step_estimate_3}, and considering that $|a-\alpha|, |b-\beta| = O(\Delta t) + O(\Delta x^2) + O(\Delta y^2)$, we arrive at an approximate time-step constraint:
\begin{equation}\label{eq:time_constraint_es}
	\Delta t \sim \sqrt{\min\{\Delta x,\Delta y\}}.
\end{equation}
\end{remark}

\subsection{High-order EL-RK-FV-WENO schemes}\label{sec:high_order_EL_scheme}
In this section, we introduce the high-order EL-RK-FV-WENO schemes by first focusing on constructing the high-order approximations of the convection and the diffusion terms of the semi-discretization \eqref{eq:semi_discretization} in \Cref{sec:flux} and \Cref{sec:diff_source} respectively. Finally, the coupling of the semi-discretization with high-order IMEX RK methods is discussed in \Cref{sec:IMEX}.
\subsubsection{Flux approximation}\label{sec:flux}
As analyzed in \Cref{rem:time_step_constraint}, the design of the flux function at the boundaries of dynamically changing, nonuniform Lagrangian cells significantly relaxes the time-step constraint. However, compared to the flux function of the Eulerian approach, located at $\{\partial I_{i,j}\}$, this brings significant challenges in terms of designing an efficient spatial discretization for such a framework. To address this, our strategy contains three basic steps. First, we conduct an efficient WENO-type reconstruction on the fixed Eulerian mesh. Second, an efficient remapping procedure is designed to map the piecewise WENO reconstruction polynomial with respect to the Eulerian mesh $\{I_{i,j}\}$ to another piecewise polynomial with respect to the Lagrangian mesh $\{\widetilde{I}_{i,j}(t)\}$. Finally, we use the new piecewise polynomial to provide upwind point values at the boundaries of $\{\partial \widetilde{I}_{i,j}(t)\}$ and approximate the flux function. The details of these three steps are concluded as follows:


\begin{enumerate}
    \item[\textbf{Step 1:}] \textit{Construct a piecewise polynomial with respect to the Eulerian mesh, $\{I_{i,j}\}$.}

    We construct a piecewise reconstruction polynomial $u^{\text{WENO}}(x,y)$ such that
    \begin{itemize}
        \item $u^{\text{WENO}}(x,y)|_{I_{i,j}} = u^{\text{WENO}}_{i,j}(x,y)$ with $u^{\text{WENO}}_{i,j}\in P^2(I_{i,j})$ for all $i,j,$
        \item each $u^{\text{WENO}}_{i,j}$ is constructed based on the information $\overline{u}_{i,j}(t)$ along with its eight neighbor finite volumes,
        \item $\iint_{I_{i,j}} u^{\text{WENO}}(x,y)dxdy=\Delta x_i\Delta y_j\overline{u}_{i,j}(t)$ for all $i,j,$
        \item $u^{\text{WENO}}(x,y) = u(x,y,t) + O(\Delta x^3),$ $(x,y)\in \Omega.$
    \end{itemize}
    Here, we summarize the details of constructing \(u^{\text{WENO}}(x,y)\) in \Cref{sec:WENO_ZQ_Eulerian} for conciseness. A schematic is offered in \Cref{fig:schematic_remap_poly_a} to demonstrate the discontinuity of $u^{\text{WENO}}(x,y)$. We use various colors to shade different Eulerian cells, indicating that $u^{\text{WENO}}(x,y)$ has distinct polynomial expressions in each cell. We would like to emphasize that the WENO-ZQ reconstruction method introduced in \Cref{sec:WENO_ZQ_Eulerian} is a more advanced version than the one we previously designed, as detailed in \cite{zheng_fourth-order_2022}. We redesigned the small stencils and part of the weighting strategy. The newly designed WENO-ZQ method presented in this paper significantly enhances the control of numerical oscillation, addressing the suboptimal performance of the previously designed method in managing nonphysical oscillations.

    \begin{figure}[htb]
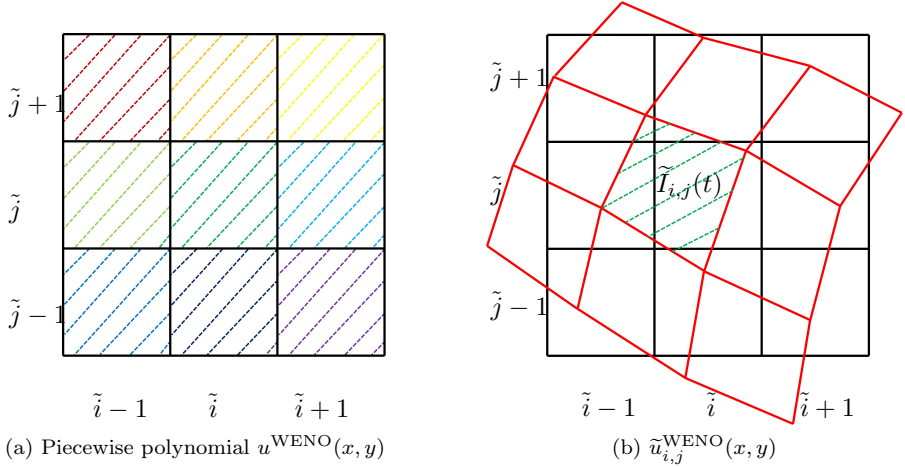

    \centering
        \subfloat[Piecewise polynomial $u^{\text{WENO}}(x,y)$]{
		\begin{overpic}[scale=0.28]{Figures//remap_Eulerian}
			\put(22.5,2){$\tilde{i}-1$}
                \put(52,2){$\tilde{i}$}
                \put(74,2){$\tilde{i}+1$}
			\put(1,26){$\tilde{j}-1$}
                \put(1,53){$\tilde{j}$}
                \put(1,80){$\tilde{j}+1$}
		\end{overpic}\label{fig:schematic_remap_poly_a}
	}
        \hspace{1cm}
	\subfloat[$\widetilde{u}_{i,j}^{\text{WENO}}(x,y)$]{
		\begin{overpic}[scale=0.28]{Figures//remap_poly}
			\put(22.5,2){$\tilde{i}-1$}
                \put(52,2){$\tilde{i}$}
                \put(74,2){$\tilde{i}+1$}
			\put(1,26){$\tilde{j}-1$}
                \put(1,53){$\tilde{j}$}
                \put(1,80){$\tilde{j}+1$}
                \put(40,55){$\widetilde{I}_{i,j}(t)$}
		\end{overpic}\label{fig:schematic_remap_poly_b}
	}
	\caption{Schematic illustrations of $u^{\text{WENO}}(x,y)$ and $\widetilde{u}_{i,j}^{\text{WENO}}(x,y)$.}
	\label{fig:schematic_remap_poly}
    \end{figure}

    \item[\textbf{Step 2:}] \textit{Construct a Piecewise Polynomial on the Lagrangian mesh  $\{\widetilde{I}_{i,j}(t)\}$ (Remapping).}

    \textbf{Step 1} is an efficient reconstruction method and is inevitable, as will be shown in \Cref{sec:IMEX}. The basic idea of this remapping step is that we want to conduct an efficient modification to $u^{\text{WENO}}(x,y)$ instead of involving a new reconstruction on the Lagrangian mesh. The resulting new piecewise polynomial, denoted by $\widetilde{u}^{\text{WENO}}(x,y)$, satisfies the following conditions:
    \begin{itemize}
        \item $\widetilde{u}^{\text{WENO}}(x,y)|_{\widetilde{I}_{i,j}(t)} = \widetilde{u}^{\text{WENO}}_{i,j}(x,y)$ with $\widetilde{u}^{\text{WENO}}_{i,j}\in P^2(\widetilde{I}_{i,j}(t))$ for all $i,j,$
        \item $\iint_{\widetilde{I}_{i,j}(t)} \widetilde{u}^{\text{WENO}}(x,y)dxdy=\iint_{\widetilde{I}_{i,j}(t)} u^{\text{WENO}}(x,y)dxdy$ for all $i,j,$
        \item $\widetilde{u}^{\text{WENO}}(x,y) = u(x,y,t) + O(\Delta x^3),$ $(x,y)\in \Omega.$
    \end{itemize}
    Here, the first condition refers to the capacity of $\widetilde{u}^{\text{WENO}}(x,y)$ to provide upwind point value information for the flux approximation, the second condition is related to the mass conservation property, and the third one is an accuracy requirement. We summarize the procedure for constructing $\widetilde{u}^{\text{WENO}}(x,y)$ in a given Lagrangian cell, $\widetilde{I}_{i,j}(t)$, as follows (see \Cref{fig:schematic_remap_poly_b}):

    \begin{enumerate}
    	\item[\textbf{Step 2.1:}] Compute the exact ``mass" of $u^{\text{WENO}}(x,y)$ over $\widetilde{I}_{i,j}(t)$, i.e.
    	\begin{equation}\label{eq:SL_remapping}
    		\widetilde{u}_{i,j}:=\iint_{\widetilde{I}_{i,j}(t)}u^{\text{WENO}}(x,y)dxdy.
    	\end{equation}
    	The integrand in \Cref{eq:SL_remapping} is discontinuous over $\widetilde{I}_{i,j}(t)$. A numerical integral for \Cref{eq:SL_remapping} contains two basic steps. First, a clipping procedure is conducted to divide $\widetilde{I}_{i,j}(t)$ into smaller polygons such that $u^{\text{WENO}}(x,y)$ is continuous in each of them. Second, a numerical integration is conducted in each polygon, and the results are summed to obtain the final integral. Following these two basic steps, there are different implementation methods  \cite{cheng_high_2008,lauritzen_conservative_2010,zheng_fourth-order_2022}. For a detailed implementation, we refer to our previous work \cite{zheng_fourth-order_2022}.
    	\item[\textbf{Step 2.2:}] Find all Eulerian cells that intersect with $\widetilde{I}_{i,j}(t)$. We define that $\mathcal{K}:=\{(p,q)|I_{p,q}\cap \widetilde{I}_{i,j}(t)\neq \emptyset\}$. For instance, in \Cref{fig:schematic_remap_poly_b}, $\mathcal{K}=\{(\tilde{i}-1,\tilde{j}+1),(\tilde{i},\tilde{j}+1),(\tilde{i}-1,\tilde{j}),(\tilde{i},\tilde{j}),(\tilde{i},\tilde{j}-1)\}$.
    	\item[\textbf{Step 2.3:}] Compute the integrals of the candidate $P^2$ polynomials over $\widetilde{I}_{i,j}(t)$, i.e.
    	\begin{equation}
    		\widetilde{u}_{i,j}^{p,q}:=\iint_{\widetilde{I}_{i,j}(t)}u^{\text{WENO}}_{p,q}(x,y)dxdy,\quad (p,q)\in\mathcal{K}.
    	\end{equation}
    	\item[\textbf{Step 2.4:}] Choose the index $(\widetilde{p},\widetilde{q})$ such that $|\widetilde{u}^{p,q}_{i,j}-\widetilde{u}_{i,j}|$ reaches the minimum, i.e.
    	\begin{equation}
    		|\widetilde{u}^{\widetilde{p},\widetilde{q}}_{i,j}-\widetilde{u}_{i,j}| = \min_{(p,q)\in\mathcal{K}}\{|\widetilde{u}^{p,q}_{i,j}-\widetilde{u}_{i,j}|\}.
    	\end{equation}
    	\item[\textbf{Step 2.5:}] Define a $P^2$ polynomial, denoted by $\widetilde{u}^{\text{WENO}}_{i,j}(x,y)$, on $\widetilde{I}_{i,j}(t)$ such that
    	\begin{equation}\label{eq:remapping_final_formulation}
    		\widetilde{u}^{\text{WENO}}_{i,j}(x,y) := u^{\text{WENO}}_{(\widetilde{p},\widetilde{q})}(x,y)|_{\widetilde{I}_{i,j}(t)}-\frac{1}{|\widetilde{I}_{i,j}(t)|}\widetilde{u}^{(\widetilde{p},\widetilde{q})}_{i,j}+\frac{1}{|\widetilde{I}_{i,j}(t)|}\widetilde{u}_{i,j},
    	\end{equation}
    	where $\cdot|_{\widetilde{I}_{i,j}(t)}$ means that we redefine the domain of definition to be $\widetilde{I}_{i,j}(t)$ for a given function. As an example, consider the case demonstrated in \Cref{fig:schematic_remap_poly_b} where $(\tilde{p},\tilde{q}) = (\tilde{i},\tilde{j})$. In this case, we simply change the domain of definition for $u_{\tilde{i},\tilde{j}}^{\text{WENO}}(x,y)$ and adjust it according to \eqref{eq:remapping_final_formulation}.

    \end{enumerate}
    \item[\textbf{Step 3:}] \textit{Construct the final flux approximation.}

    The final flux approximation is given by the following conservative formulation
    \begin{equation}\label{eq:EL_FV_flux_approximation}
    	\widetilde{\mathcal{F}}_{i,j}(\overline{\mathbf{U}};t):=-\int_{\partial \widetilde{I}_{i,j}(t)}\hat{F}\left(\widetilde{u}^{\text{WENO}},x,y,t\right)ds,
    \end{equation}
    where
    \begin{equation}
    	\hat{F}\left(\widetilde{u}^{\text{WENO}},x,y,t\right):=W(x,y,t)\widetilde{u}_{i,j}^{\text{up}}(x,y)
    \end{equation}
    with
    \begin{equation}
    	W(x,y,t):=\left(a-\alpha,b-\beta\right)\cdot \mathbf{n}
    \end{equation}
    and
    \begin{equation}
    	\widetilde{u}_{i,j}^{\text{up}}(x,y):=
    	\begin{cases}
    		\widetilde{u}^{\text{WENO}}_{i,j}(x,y),\quad W> 0,\\
    		\widetilde{u}^{\text{WENO,ext}}_{i,j}(x,y),\quad W\leq 0.
    	\end{cases}
    \end{equation}
    Here, $\widetilde{u}^{\text{WENO,ext}}_{i,j}(x,y)$ are the exterior solution with respect to corresponding edge of $\widetilde{I}_{i,j}(t)$.
\end{enumerate}

At the end of this section, we prove two basic properties of $\widetilde{u}^{\text{WENO}}(x,y)$.
\begin{proposition}(Mass conservation for the remapping method) Local integrals of $\widetilde{u}^{\text{WENO}}$ over $\{\widetilde{I}_{i,j}(t)\}$ are consistent with the integrals of $u^{\text{WENO}}$.

\end{proposition}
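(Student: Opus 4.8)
The plan is to verify directly that the local integral of the remapped polynomial $\widetilde{u}^{\text{WENO}}_{i,j}$ over each Lagrangian cell $\widetilde{I}_{i,j}(t)$ reproduces the integral of the Eulerian reconstruction $u^{\text{WENO}}$ over the same cell; that is, I would establish the identity
\[
\iint_{\widetilde{I}_{i,j}(t)}\widetilde{u}^{\text{WENO}}_{i,j}(x,y)\,dxdy = \widetilde{u}_{i,j},
\]
where $\widetilde{u}_{i,j}$ is the exact ``mass'' defined in \eqref{eq:SL_remapping}. Since the construction is carried out cell by cell, it suffices to prove the statement on a single $\widetilde{I}_{i,j}(t)$; the claim then holds for every index and yields the global consistency asserted in the proposition.

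The key step is to integrate the defining formula \eqref{eq:remapping_final_formulation} term by term over $\widetilde{I}_{i,j}(t)$, using linearity of integration. The first term integrates to $\iint_{\widetilde{I}_{i,j}(t)}u^{\text{WENO}}_{(\widetilde{p},\widetilde{q})}(x,y)\,dxdy$, which by the definition in \textbf{Step 2.3} equals $\widetilde{u}^{(\widetilde{p},\widetilde{q})}_{i,j}$. The two remaining terms in \eqref{eq:remapping_final_formulation} are constants, so their integrals equal those constants multiplied by $|\widetilde{I}_{i,j}(t)| = \iint_{\widetilde{I}_{i,j}(t)}dxdy$. In each, the factor $|\widetilde{I}_{i,j}(t)|$ cancels against the normalizing denominator, leaving $-\,\widetilde{u}^{(\widetilde{p},\widetilde{q})}_{i,j} + \widetilde{u}_{i,j}$. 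Summing the three contributions, the $\widetilde{u}^{(\widetilde{p},\widetilde{q})}_{i,j}$ terms cancel exactly and one is left with $\widetilde{u}_{i,j}$, as desired.

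Since $\widetilde{u}_{i,j}$ is by construction \eqref{eq:SL_remapping} precisely $\iint_{\widetilde{I}_{i,j}(t)}u^{\text{WENO}}(x,y)\,dxdy$, the displayed identity is exactly the mass-consistency assertion. There is no genuine analytical obstacle here: the argument is a one-line computation, and the only point to emphasize is that the two correction constants in \eqref{eq:remapping_final_formulation} were designed precisely so that the chosen candidate's own integral $\widetilde{u}^{(\widetilde{p},\widetilde{q})}_{i,j}$ is subtracted off and the target mass $\widetilde{u}_{i,j}$ is added back. I would also note that the minimization in \textbf{Step 2.4} plays no role in conservation—it governs only accuracy and oscillation control—so the result holds for any admissible index $(\widetilde{p},\widetilde{q})\in\mathcal{K}$.
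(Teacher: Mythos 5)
Your proposal is correct and follows exactly the paper's own argument: integrate the defining formula \eqref{eq:remapping_final_formulation} term by term over $\widetilde{I}_{i,j}(t)$, cancel $|\widetilde{I}_{i,j}(t)|$ against the normalizing denominators, and observe that the $\widetilde{u}^{(\widetilde{p},\widetilde{q})}_{i,j}$ contributions cancel, leaving $\widetilde{u}_{i,j}=\iint_{\widetilde{I}_{i,j}(t)}u^{\text{WENO}}(x,y)\,dxdy$. Your added remark that the minimization in \textbf{Step 2.4} is irrelevant to conservation is a correct and useful observation, though the paper does not state it.
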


\begin{proof}
    \begin{equation}
        \begin{split}
            &\iint_{\widetilde{I}_{i,j}(t)}\widetilde{u}^{\text{WENO}}(x,y)dxdy\\
            =&\iint_{\widetilde{I}_{i,j}(t)}\left(u^{\text{WENO}}_{(\widetilde{p},\widetilde{q})}(x,y)|_{\widetilde{I}_{i,j}(t)}-\frac{1}{|\widetilde{I}_{i,j}(t)|}\widetilde{u}^{(\widetilde{p},\widetilde{q})}_{i,j}+\frac{1}{|\widetilde{I}_{i,j}(t)|}\widetilde{u}_{i,j}\right)dxdy\\
            =&~~\widetilde{u}^{(\widetilde{p},\widetilde{q})}_{i,j}-\widetilde{u}^{(\widetilde{p},\widetilde{q})}_{i,j}+\widetilde{u}_{i,j}\\
            =&\iint_{\widetilde{I}_{i,j}(t)}u^{\text{WENO}}(x,y)dxdy.
        \end{split}
    \end{equation}
\end{proof}

\begin{proposition}(Accuracy of the remapping method) Assuming $\Delta x \sim \Delta y$, we have the following estimate for $\widetilde{u}^{\text{WENO}}$:
\begin{equation}
    \widetilde{u}^{\text{WENO}}(x,y)=u(x,y,t)+O(\Delta x^3).
\end{equation}
\begin{proof}
    For $(\widetilde{x},\widetilde{y})\in \widetilde{I}_{i,j}(t)$,
    \begin{equation}
    \begin{split}
        &\widetilde{u}^{\text{WENO}}(\widetilde{x},\widetilde{y})-u(\widetilde{x},\widetilde{y},t)\\
        =&\left(u^{\text{WENO}}_{(\widetilde{p},\widetilde{q})}(\widetilde{x},\widetilde{y})|_{\widetilde{I}_{i,j}(t)}-\frac{1}{|\widetilde{I}_{i,j}(t)|}\widetilde{u}^{(\widetilde{p},\widetilde{q})}_{i,j}+\frac{1}{|\widetilde{I}_{i,j}(t)|}\widetilde{u}_{i,j}\right)-u(\widetilde{x},\widetilde{y},t)\\
        =&\left(u^{\text{WENO}}_{(\widetilde{p},\widetilde{q})}(\widetilde{x},\widetilde{y})|_{\widetilde{I}_{i,j}(t)}-u(\widetilde{x},\widetilde{y},t)\right)-\frac{1}{|\widetilde{I}_{i,j}(t)|}\left(\widetilde{u}^{(\widetilde{p},\widetilde{q})}_{i,j}-\widetilde{u}_{i,j}\right)
    \end{split}
    \end{equation}
    For $u^{\text{WENO}}_{(\widetilde{p},\widetilde{q})}(\widetilde{x},\widetilde{y})|_{\widetilde{I}_{i,j}(t)}-u(\widetilde{x},\widetilde{y},t)$, we have
    \begin{equation*}
        \begin{split}
            &u^{\text{WENO}}_{(\widetilde{p},\widetilde{q})}(\widetilde{x},\widetilde{y})|_{\widetilde{I}_{i,j}(t)}-u(\widetilde{x},\widetilde{y},t)\\
            =&\sum\limits_{k=0}^2\left[\left(\widetilde{x}-x_{\widetilde{p}}\right)\frac{\partial}{\partial x}+\left(\widetilde{y}-y_{\widetilde{q}}\right)\frac{\partial}{\partial y}\right]^k\left(u^{\text{WENO}}_{(\widetilde{p},\widetilde{q})}(\cdot,\cdot)-u(\cdot,\cdot,t)\right)|_{(x_{\widetilde{p}},y_{\widetilde{q}})}+O(\Delta x^3).
        \end{split}
    \end{equation*}
    Following the same procedure in \Cref{remark:WENO_ZQ_accuracy}, we can easily prove that
    \begin{equation}\label{eq:estimate_remapping_1}
        \|D^{\alpha}\left(u^{\text{WENO}}_{(\widetilde{p},\widetilde{q})}(\cdot,\cdot)-u(\cdot,\cdot,t)\right)\|_{\infty}=O(\Delta x^{3-\alpha})
    \end{equation}
    Combining \eqref{eq:estimate_remapping_1} with the fact that $\widetilde{x}-x_{\widetilde{p}}=O(\Delta t)$, $\widetilde{y}-y_{\widetilde{q}}=O(\Delta t)$, and $\Delta t \sim \Delta x \sim \Delta y$, we have
    \begin{equation}\label{{eq:estimate_remapping_2}}
        u^{\text{WENO}}_{(\widetilde{p},\widetilde{q})}(\widetilde{x},\widetilde{y})|_{\widetilde{I}_{i,j}(t)}-u(\widetilde{x},\widetilde{y},t) = O(\Delta x^3).
    \end{equation}
    For $\frac{1}{|\widetilde{I}_{i,j}(t)|}\left(\widetilde{u}^{(\widetilde{p},\widetilde{q})}_{i,j}-\widetilde{u}_{i,j}\right)$,
    \begin{equation}
        \begin{split}
            &\frac{1}{|\widetilde{I}_{i,j}(t)|}\left(\widetilde{u}^{(\widetilde{p},\widetilde{q})}_{i,j}-\widetilde{u}_{i,j}\right)\\
            =&\frac{1}{|\widetilde{I}_{i,j}(t)|}\left(\iint_{\widetilde{I}_{i,j}(t)}u^{\text{WENO}}_{(\widetilde{p},\widetilde{q})}(x,y)dxdy-\iint_{\widetilde{I}_{i,j}(t)}u^{\text{WENO}}(x,y)dxdy\right)\\
            =&\frac{1}{|\widetilde{I}_{i,j}(t)|}\left(\iint_{\widetilde{I}_{i,j}(t)}\left(u^{\text{WENO}}_{(\widetilde{p},\widetilde{q})}(x,y)-u^{\text{WENO}}(x,y)\right)dxdy\right)\\
        \end{split}
    \end{equation}
    Since \eqref{{eq:estimate_remapping_2}} is true for all $(\widetilde{x},\widetilde{y})\in \widetilde{I}_{i,j}(t)$ and
    \begin{equation}
        u^{\text{WENO}}(x,y) - u(x,y,t) = O(\Delta x^3)\quad \text{for all } (x,y)\in\Omega
    \end{equation}
    from \Cref{remark:WENO_ZQ_accuracy}, we immediately have
    \begin{equation}
        \frac{1}{|\widetilde{I}_{i,j}(t)|}\left(\widetilde{u}^{(\widetilde{p},\widetilde{q})}_{i,j}-\widetilde{u}_{i,j}\right) = O(\Delta x^3).
    \end{equation}
\end{proof}
	
\end{proposition}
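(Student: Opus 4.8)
The plan is to estimate the pointwise error at an arbitrary $(\widetilde{x},\widetilde{y})\in\widetilde{I}_{i,j}(t)$ by substituting the definition \eqref{eq:remapping_final_formulation} and splitting $\widetilde{u}^{\text{WENO}}(\widetilde{x},\widetilde{y})-u(\widetilde{x},\widetilde{y},t)$ into two contributions: the \emph{polynomial error} $E_1:=u^{\text{WENO}}_{(\widetilde{p},\widetilde{q})}(\widetilde{x},\widetilde{y})|_{\widetilde{I}_{i,j}(t)}-u(\widetilde{x},\widetilde{y},t)$ and the \emph{average correction} $E_2:=-\frac{1}{|\widetilde{I}_{i,j}(t)|}\bigl(\widetilde{u}^{(\widetilde{p},\widetilde{q})}_{i,j}-\widetilde{u}_{i,j}\bigr)$. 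I would show $E_1=O(\Delta x^3)$ and $E_2=O(\Delta x^3)$ separately, working throughout under the scaling $\Delta t\sim\Delta x\sim\Delta y$.

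For $E_1$, the key geometric observation is that $\widetilde{I}_{i,j}(t)$ is the image of $I_{i,j}$ under an $O(\Delta t)$ displacement along the modified characteristics \eqref{eq:modified_characteristics}, and $(\widetilde{p},\widetilde{q})$ is selected among the cells meeting $\widetilde{I}_{i,j}(t)$; hence $\widetilde{x}-x_{\widetilde{p}}=O(\Delta t)$ and $\widetilde{y}-y_{\widetilde{q}}=O(\Delta t)$. I would then Taylor-expand $u^{\text{WENO}}_{(\widetilde{p},\widetilde{q})}-u$ about the cell center $(x_{\widetilde{p}},y_{\widetilde{q}})$ to second order. Because $u^{\text{WENO}}_{(\widetilde{p},\widetilde{q})}\in P^2$, its own expansion is exact, so the only remainder is the order-three Taylor remainder of $u$, which is $O(\Delta t^3)=O(\Delta x^3)$. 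The degree-$k$ term of the expansion carries a monomial $(\widetilde{x}-x_{\widetilde{p}})^a(\widetilde{y}-y_{\widetilde{q}})^b=O(\Delta x^k)$ with $a+b=k$, multiplied by a derivative of the reconstruction error of order $k$; the essential ingredient is therefore the derivative bound that the reconstruction error loses exactly one order per derivative, i.e. $\|D^{\alpha}(u^{\text{WENO}}_{(\widetilde{p},\widetilde{q})}-u)\|_{\infty}=O(\Delta x^{3-|\alpha|})$, which I would obtain by the same reconstruction-consistency argument used for the Eulerian WENO accuracy in \Cref{remark:WENO_ZQ_accuracy}. Each term is then $O(\Delta x^k)\cdot O(\Delta x^{3-k})=O(\Delta x^3)$, and summing over $k=0,1,2$ yields $E_1=O(\Delta x^3)$ uniformly on $\widetilde{I}_{i,j}(t)$.

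For $E_2$, I would use the mass-conservation identity behind Step 2.1, namely $\widetilde{u}_{i,j}=\iint_{\widetilde{I}_{i,j}(t)}u^{\text{WENO}}\,dxdy$ and $\widetilde{u}^{(\widetilde{p},\widetilde{q})}_{i,j}=\iint_{\widetilde{I}_{i,j}(t)}u^{\text{WENO}}_{(\widetilde{p},\widetilde{q})}\,dxdy$, to rewrite $E_2$ as the average over $\widetilde{I}_{i,j}(t)$ of $u^{\text{WENO}}_{(\widetilde{p},\widetilde{q})}-u^{\text{WENO}}$. On each sub-cell where the piecewise reconstruction $u^{\text{WENO}}$ agrees with a single polynomial $u^{\text{WENO}}_{p,q}$, both that polynomial and $u^{\text{WENO}}_{(\widetilde{p},\widetilde{q})}$ approximate $u$ to $O(\Delta x^3)$ — the former directly from the WENO accuracy in \Cref{remark:WENO_ZQ_accuracy}, the latter from the uniform bound on $E_1$ just established, which holds throughout $\widetilde{I}_{i,j}(t)$. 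Thus the integrand is $O(\Delta x^3)$ pointwise; since averaging a quantity of size $O(\Delta x^3)$ over $\widetilde{I}_{i,j}(t)$ preserves its order, $E_2=O(\Delta x^3)$, and combining with $E_1$ finishes the proof.

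The hard part will be the $E_1$ estimate, specifically justifying that a polynomial reconstructed to be third-order accurate on the Eulerian cell $I_{\widetilde{p},\widetilde{q}}$ remains third-order accurate when evaluated at points of the displaced cell $\widetilde{I}_{i,j}(t)$ that can fall outside $I_{\widetilde{p},\widetilde{q}}$. This rests entirely on the one-order-per-derivative bound $\|D^{\alpha}(u^{\text{WENO}}_{(\widetilde{p},\widetilde{q})}-u)\|_{\infty}=O(\Delta x^{3-|\alpha|})$ together with the geometric estimate $\widetilde{x}-x_{\widetilde{p}},\,\widetilde{y}-y_{\widetilde{q}}=O(\Delta t)$; once these are in hand, the remaining balancing of orders is routine bookkeeping.
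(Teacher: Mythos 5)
Your proposal is correct and follows essentially the same route as the paper's proof: the same splitting into the pointwise polynomial error and the cell-average correction, the same Taylor expansion of $u^{\text{WENO}}_{(\widetilde{p},\widetilde{q})}-u$ about $(x_{\widetilde{p}},y_{\widetilde{q}})$ combined with the derivative bound $\|D^{\alpha}(u^{\text{WENO}}_{(\widetilde{p},\widetilde{q})}-u)\|_{\infty}=O(\Delta x^{3-|\alpha|})$ and the $O(\Delta t)$ displacement estimate, and the same reduction of the correction term to an average of $u^{\text{WENO}}_{(\widetilde{p},\widetilde{q})}-u^{\text{WENO}}$ bounded pointwise via the Eulerian WENO accuracy. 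Your observation that the $P^2$ polynomial's own expansion is exact is a small clarifying addition, but the argument is otherwise identical to the paper's.
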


\subsubsection{Approximation of the diffusion term}\label{sec:diff_source}

The strategy of constructing $\widetilde{\mathcal{G}}_{i,j}(\overline{\mathbf{U}};t)$ contains three steps. First, based on the finite volume information of $u$, we recover high-order finite volume information of $\Delta u$  (assuming uniform mesh) by introducing a differential matrix. Second, we reconstruct a high-order piecewise polynomial to approximate $\Delta u$. Finally, we evaluate the corresponding integral over $\widetilde{I}_{i,j}(t)$ and obtain $\widetilde{\mathcal{G}}_{i,j}(\overline{\mathbf{U}};t)$. We emphasize here that the implicit treatment of the diffusion term is performed on the Eulerian mesh. This means that the differential matrix introduced in the following \textbf{Step 1} is necessary, as will be shown in \Cref{sec:IMEX}. The details of these three steps are summarized as follows:

\begin{enumerate}
    \item[\textbf{Step 1:}] \textit{Recover high-order finite volume information of $\Delta u$.}

    We recover
    \begin{equation}
        \Delta \overline{\mathbf{U}} := \mathbf{D}\overline{\mathbf{U}},
    \end{equation}
    where $\mathbf{D}$ is the differential matrix assembled by the following local operators
    \begin{equation*}
        \overline{\Delta u}_{i,j} := \left[-\frac{1}{12}~~\frac{4}{3}~~-\frac{5}{2}~~\frac{4}{3}~~ -\frac{1}{12}\right]\left(\frac{1}{\Delta x^2}\left[\begin{array}{c}
            \overline{u}_{i-2,j}\\
            \overline{u}_{i-1,j}\\
            \overline{u}_{i,j}\\
            \overline{u}_{i+1,j}\\
            \overline{u}_{i+2,j}
        \end{array}\right]+\frac{1}{\Delta y^2}\left[\begin{array}{c}
            \overline{u}_{i,j-2}\\
            \overline{u}_{i,j-1}\\
            \overline{u}_{i,j}\\
            \overline{u}_{i,j+1}\\
            \overline{u}_{i,j+2}
        \end{array}\right]\right),
    \end{equation*}
    for all $i,j$, and corresponding boundary conditions.

    \item[\textbf{Step 2:}] \textit{Recover a piecewise reconstruction polynomial of $\Delta u$.}

    We utilize the same polynomial $q_0(x,y)$ in \Cref{sec:WENO_ZQ_Eulerian} as the reconstruction formula and denote the final piecewise polynomial by $\Delta u^{\text{rec}}(x,y)$.

    \item[\textbf{Step 3:}] \textit{Compute the integral of $\Delta u^{\text{rec}}(x,y)$ as the final approximation.}

    \begin{align}
          &\widetilde{\mathcal{G}}_{i,j}(\overline{\mathbf{U}};t) \nonumber \\
         =&\epsilon\iint_{\widetilde{I}_{i,j}(t)}\Delta udxdy\approx \epsilon\iint_{\widetilde{I}_{i,j}(t)}\Delta u^{\text{rec}}(x,y)dxdy :=\epsilon\widetilde{\Delta u}_{i,j}\left(\overline{\mathbf{U}};t\right)\quad \text{for all}~~i,j,\label{eq:EL_FV_diffussion_approx}
    \end{align}
    where the integral of piesewise polynomial is accomplished by the method of our previous work \cite{zheng_fourth-order_2022}.
\end{enumerate}

\subsubsection{High-order IMEX RK temporal discretization}\label{sec:IMEX}

In light of the high-order spatial discretization presented in \eqref{eq:semi_discretization}, \eqref{eq:EL_FV_flux_approximation}, and \eqref{eq:EL_FV_diffussion_approx}, we introduce the IMEX RK temporal discretizations. Under the IMEX setting, we evolve the convection flux term explicitly while evolving the diffusion term implicitly. In addition, the time-step constraint is controlled by the explicit convection part, with $\Delta t\sim \sqrt{\min\{\Delta x, \Delta y\}}$.

An IMEX RK scheme can be represented by the following two butcher tables \cite{ascher_implicit_explicit_1997}:
\[
\begin{array}{cc}
\textbf{Implicit Scheme} & \hspace{1.cm}\textbf{Explicit Scheme} \\
\begin{array}{c|ccccc}
0 & 0 & 0 &  0 & \cdots & 0 \\
c_1 & 0  & a_{11} & 0 & \cdots & 0 \\
c_2 & 0  & a_{21} & a_{22} & \cdots & 0 \\
\vdots & \vdots &  \vdots & \vdots & \ddots & \vdots \\
c_s & 0  & a_{s1} & a_{s2} & \cdots & a_{ss} \\
\hline
& 0 & b_1 & b_2 & \cdots & b_s
\end{array}
& \hspace{1.cm}
\begin{array}{c|ccccc}
0 & 0 & 0 & 0 & \cdots & 0 \\
c_1 & \hat{a}_{21} & 0 &  0 & \cdots & 0 \\
c_2 & \hat{a}_{31} & \hat{a}_{32} & 0 &  \cdots & 0 \\
\vdots & \vdots & \vdots & \vdots & \ddots & \vdots \\
c_{\sigma-1} & \hat{a}_{\sigma,1} & \hat{a}_{\sigma,2} & \hat{a}_{\sigma,3} & \cdots & 0 \\
\hline
& \hat{b}_1 & \hat{b}_2 & \hat{b}_3 & \cdots & \hat{b}_{\sigma}
\end{array}
\end{array}
\]
A triplet $(s,\sigma,p)$ is used to demonstrate that the IMEX scheme uses an s-stage implicit scheme and a $\sigma$-stage explicit scheme achieving $p$th-order accuracy. 

The butcher table of first-order IMEX scheme in \eqref{eq:scheme_IMEX111}, which is also called IMEX(1,1,1) in \cite{ascher_implicit_explicit_1997}, is
\[
\begin{array}{cc}
\textbf{Implicit Scheme} & \hspace{1cm}\textbf{Explicit Scheme} \\
\begin{array}{c|cc}
0 & 0 & 0 \\
1 & 0 & 1 \\
\hline
& 0 & 1
\end{array}
& \hspace{1cm}
\begin{array}{c|cc}
0 & 0 & 0 \\
1 & 1 & 0 \\
\hline
& 1 & 0
\end{array}
\end{array}
\]
The second-order IMEX(1,2,2) scheme in \cite{ascher_implicit_explicit_1997} is represented by the following butcher tables:
\[
\begin{array}{cc}
\textbf{Implicit Scheme} & \hspace{1cm}\textbf{Explicit Scheme} \\
\begin{array}{c|cc}
0 & 0 & 0 \\
\frac{1}{2} & 0 & \frac{1}{2} \\
\hline
& 0 & 1
\end{array}
& \hspace{1cm}
\begin{array}{c|cc}
0 & 0 & 0 \\
\frac{1}{2} & \frac{1}{2} & 0 \\
\hline
& 0 & 1
\end{array}
\end{array}
\]

\begin{figure}[!htbp]
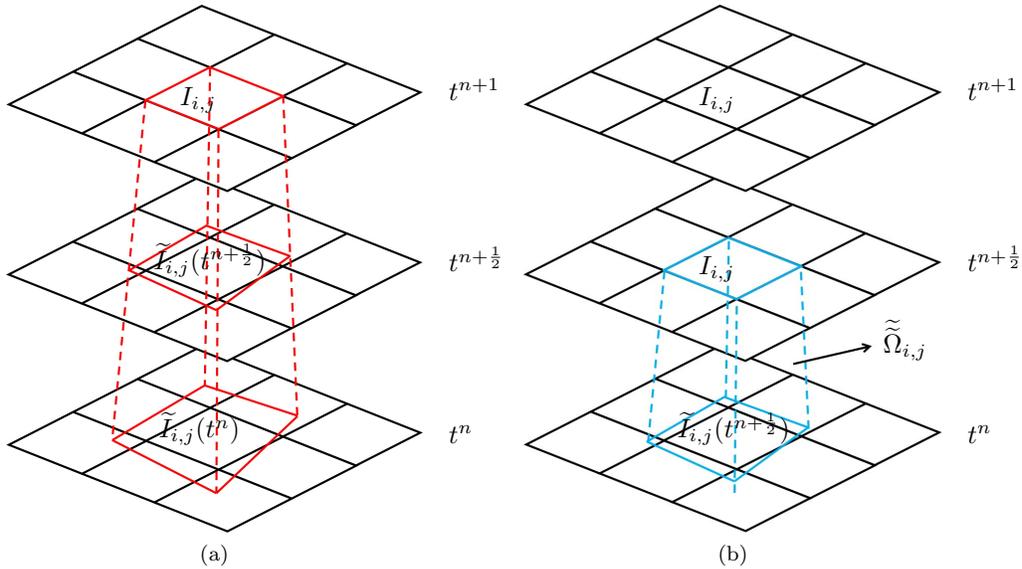

    \centering
    \subfloat[]{
        \begin{overpic}[scale=0.4]{Figures//schematic_RK3_2}
            \put(33,81){$I_{i,j}$}
            \put(29,18){$\widetilde{I}_{i,j}(t^n)$}
            \put(28,50){$\widetilde{I}_{i,j}(t^{n+\frac{1}{2}})$}
            \put(84,17) {$t^n$}
            \put(84,49.5) {$t^{n+\frac{1}{2}}$}
            \put(84,82){$t^{n+1}$}
        \end{overpic}\label{fig:schematic_IMEX122_a}
    }
    \hspace{1cm}
    \subfloat[]{
        \begin{overpic}[scale=0.4]{Figures//schematic_RK3_1}
            \put(33,81){$I_{i,j}$}
            \put(33,49){$I_{i,j}$}
            \put(29,18){$\widetilde{I}_{i,j}(t^{n+\frac{1}{2}})$}
            \put(84,17) {$t^n$}
            \put(84,49.5) {$t^{n+\frac{1}{2}}$}
            \put(84,82){$t^{n+1}$}
            \put(68,34){$\widetilde{\widetilde{\Omega}}_{i,j}$}
        \end{overpic}\label{fig:schematic_IMEX122_b}
    }
    \caption{Schematic illustration of full discretization with IMEX(1,2,2).}\label{fig:schematic_IMEX122}
\end{figure}

One noteworthy complexity of IMEX(1,2,2), in contrast to IMEX(1,1,1), is its involvement of the intermediate time level $t^{n+\frac{1}{2}}$ (see \Cref{fig:schematic_IMEX122_a}). To construct a full discretization based on IMEX(1,2,2), we define a new characteristic region $\widetilde{\widetilde{\Omega}}_{i,j}:=\{(x,y,t)|(x,y)\in \widetilde{I}_{i,j}(t+\frac{1}{2}\Delta t),~~t\in[t^n,t^{n+\frac{1}{2}}]\}$ for each mesh index $(i,j)$ (see \Cref{fig:schematic_IMEX122_b}), following a similar strategy as proposed in \cite{nakao2022eulerian}. For $\{\widetilde{\widetilde{\Omega}}_{i,j}\}$, we define an operator $\widetilde{\widetilde{\mathcal{W}}}$ such that $\widetilde{\widetilde{\mathcal{W}}}(\overline{\mathbf{U}},t^{n+\alpha})$ represents the corresponding WENO piecewise polynomial with respect to $\{\widetilde{I}_{i,j}(t^{n+\frac{1}{2}+\alpha})\}$, which is constructed based on $\overline{\mathbf{U}}$. Furthermore, we define that $\widetilde{\widetilde{\bm{\mathcal{F}}}}(\overline{\mathbf{U}},t^{n+\alpha}) :=\left(\widetilde{\widetilde{\mathcal{F}}}_{i,j}(\overline{\mathbf{U}},t^{n+\alpha}) \right)_{N_xN_y}$ with
\begin{equation}
\widetilde{\widetilde{\mathcal{F}}}_{i,j}(\overline{\mathbf{U}},t^{n+\alpha}) := -\int_{\partial \widetilde{I}_{i,j}(t^{n+\frac{1}{2}+\alpha})}\hat{F}(\widetilde{\widetilde{W}}(\overline{\mathbf{U}},t^{n+\alpha}),x,y,t^{n+\alpha})ds,\quad\text{for all}~~i,j.
\end{equation}
Then, the second-order fully discrete scheme is provided as follows (see \Cref{fig:schematic_IMEX122}):
\begin{align}
    \mathbf{M}\overline{\mathbf{U}}^{(1)} &= \widetilde{\widetilde{\mathbf{U}}}^n+\frac{1}{2}\Delta t\widetilde{\widetilde{\bm{\mathcal{F}}}}(\overline{\mathbf{U}}^n,t^n) + \frac{1}{2}\Delta t\left(\epsilon\mathbf{M}\mathbf{D}\overline{\mathbf{U}}^{(1)}\right),\label{eq:scheme_IMEX122_1}\\
    \mathbf{M}\overline{\mathbf{U}}^{n+1} &= \widetilde{\mathbf{U}}^n+\Delta t\widetilde{\bm{\mathcal{F}}}(\overline{\mathbf{U}}^{(1)},t^{n+\frac{1}{2}}) + \Delta t\widetilde{\bm{\mathbf{\mathcal{G}}}}(\overline{\mathbf{U}}^{(1)},t^{n+\frac{1}{2}}),\label{eq:scheme_IMEX122_2}
\end{align}
where \begin{itemize}
    \item the notation $\widetilde{\widetilde{\cdot}}$ is used to indicate that the corresponding values or operations refer to specific slices of the characteristic region $\widetilde{\widetilde{\Omega}}_{i,j}$,
    \item $\widetilde{\widetilde{\mathbf{U}}}^n=:\left(\iint_{\widetilde{I}_{i,j}(t^{n+\frac{1}{2}})}\mathcal{W}\left(\overline{\mathbf{U}}^n\right)dxdy\right)_{N_xN_y}$ (see \Cref{fig:schematic_IMEX122_b}).
\end{itemize}

The design of the additional characteristic region $\widetilde{\widetilde{\Omega}}_{i,j}$ helps us avoid the expansive reconstruction procedure associated with nonuniform Lagrangian meshes at the intermediate time level, which dynamically change as time evolves. For the numerical tests, we employ IMEX(2,3,3) as described in \cite{ascher_implicit_explicit_1997} to construct a third-order fully discretized EL-RK-FV-WENO-IMEX scheme. We omit the details for IMEX(2,3,3) since the key techniques have already been covered in the IMEX(1,2,2) case.


\subsection{EL-RK-FV-WENO scheme for the nonlinear convection-diffusion equation}\label{sec:nonlinear}
Consider
\begin{equation}\label{eq:convec_diffu_reac_nonlinear}
    u_t+(f_1(u))_x+(f_2(u))_y=\epsilon(u_{xx}+u_{yy}).
\end{equation}
To extend the proposed scheme for \eqref{eq:convec_diffu_reac_nonlinear}, we further design  two modifications with very limited extra cost. The two modifications utilize the EL-RK-FV framework designed in the previous sections and no longer require the adoption of one temporal method within another as in \cite{cai_eulerian-lagrangian_2021}. We summarize the details of these two modifications as follows:
\begin{enumerate}
    \item[\textbf{1.}] \textit{Redesign the modified velocity field.}

    The construction of the original modified velocity field requires the exact velocity field at $t = t^{n+1}$, which is unknown for nonlinear models. We redesign that $(\alpha(x,y,t),\beta(x,y,t))$ is defined by first applying interpolation at $t=t^{n+1}$ such that
	\begin{equation}\label{eq:alpha_beta_tnp1_2}
		\begin{split}
			\alpha(x_{i\pm\frac12},y_{j\pm\frac12},t^{n+1})=f_1^\prime(\mathcal{W}(\overline{\mathbf{U}}^n))|_{(x_{i\pm\frac12},y_{j\pm\frac12})},\\
			\beta(x_{i\pm\frac12},y_{j\pm\frac12},t^{n+1})=f_2^\prime(\mathcal{W}(\overline{\mathbf{U}}^n))|_{(x_{i\pm\frac12},y_{j\pm\frac12})},
		\end{split}
	\end{equation}
and then following the same procedure as in \eqref{eq:dynamic_region}-\eqref{eq:modified_velocity}. This modified velocity field can still give us the same time-step constraint $\Delta t \sim \sqrt{\min\{\Delta x, \Delta y\}}$ if we apply similar analysis as in \Cref{rem:time_step_constraint}. This flexibility of defining the modified velocity field is of vital importance. In our previous semi-Lagrangian work for convection-diffusion equations \cite{ding_semi-lagrangian_2020}, the necessity of an exact velocity field presents challenges in adapting the approach for nonlinear models.

    \item[\textbf{2.}] \textit{Recover the velocity fields at intermediate time levels.}

    The velocity field for $t\in(t^n,t^{n+1}]$, which is required for evaluating numerical fluxs, is unknown for nonlinear models. For this issue, we simply use the predicted solutions at the intermediate time levels to recover corresponding velocity fields. For example, in \eqref{eq:scheme_IMEX122_2}, the exact velocity field $(a(x,y,t^{n+\frac{1}{2}}),b(x,y,t^{n+\frac{1}{2}}))$ is replaced with $\left(f_1^\prime(\mathcal{W}(\overline{\mathbf{U}}^{(1)})),f_2^\prime(\mathcal{W}(\overline{\mathbf{U}}^{(1)}))\right)$ for $\widetilde{\bm{\mathcal{F}}}\left(\overline{\mathbf{U}}^{(1)},t^{n+\frac{1}{2}}\right)$.


\end{enumerate}

\section{Numerical tests}\label{sec:numerical_tests}
In this section, we apply the proposed EL-RK-FV-WENO scheme to four challenging problems. The first two problems involve linear equations: the swirling deformation flow, characterized by pure convection terms, and the 0D2V Leonard-Bernstein linearized Fokker-Planck equation. The latter two are nonlinear models: the Kelvin-Helmholtz instability problem, again with pure convection terms, and the incompressible Navier-Stokes equations. We use these four cases to demonstrate the effectiveness and the designed properties of the proposed scheme. The time-steps in the following are defined by:
\begin{equation}\label{eq:time_steps_linear}
    \Delta t = \frac{\text{CFL}}{\frac{\max\{|f^\prime(u)|\}}{\Delta x}+\frac{\max\{|g^\prime(u)|\}}{\Delta y}},
\end{equation}
where $\left(f^\prime(u),g^\prime(u)\right)$ represents the corresponding velocity field. For pure convection simulation, we use the third-order Runge-Kutta temporal discretization with the following butcher table:
\[
\begin{array}{c|ccc}
0 & 0 & 0 & 0 \\
\frac{1}{2} & \frac{1}{2} & 0 & 0 \\
1 & -1 & 2 & 0\\
\hline
& \frac{1}{6} & \frac{2}{3} & \frac{1}{6}
\end{array}
\]
For convection-diffusion simulation, we apply IMEX(2,3,3) in \cite{ascher_implicit_explicit_1997} with the following butcher tables:
\[
\begin{array}{cc}
\textbf{Implicit Scheme} & \hspace{1.5cm}\textbf{Explicit Scheme} \\
\begin{array}{c|ccc}
0 & 0 & 0 & 0 \\
\gamma & 0 & \gamma & 0 \\
1-\gamma & 0 & 1-2\gamma & \gamma\\
\hline
& 0 & \frac{1}{2} & \frac{1}{2}
\end{array}
& \hspace{1.5cm}
\begin{array}{c|ccc}
0 & 0 & 0 & 0 \\
\gamma & \gamma & 0 & 0 \\
1-\gamma & \gamma-1 & 2\left(1-\gamma\right) & 0\\
\hline
& 0 & \frac{1}{2} & \frac{1}{2}
\end{array}
\end{array}
\]
where $\gamma = (3+\sqrt{3})/6$.

\subsection{Linear models}

\begin{example}(Swirling deformation flow). Consider  the following equation:
	\begin{equation}\label{2_D_SDF}
		\begin{split}
			u_t-(2\pi\text{cos}^2(\frac x2)\text{sin}(y)g(t)u)_x + (2\pi\text{sin}(x)\text{cos}^2(\frac y2)g(t)u)_y=0,~~x,~y\in[-\pi,\pi],
		\end{split}
	\end{equation}
	where $g(t) = \text{cos}(\pi t/T)$ with $T = 1.5$. We first consider \eqref{2_D_SDF} with the following smooth initial condition:
	\begin{equation}\label{eq:SDF_smooth_initial_condtion}
		u(x,y,0) = \begin{cases}
			r^b_0\text{cos}(\frac{r^b(\mathbf{x})\pi}{2r^b_0})^6~~&\text{if}~ r^b(\mathbf{x})<r^b_0,\\
			0,~~&\text{otherwise},
		\end{cases}
	\end{equation}
	where $r^b_0=0.3\pi$, $r^b(\mathbf{x})=\sqrt{ (x-x_0^b)^2+(y-y_0^b)^2 }$ and the center of the cosine bell $(x_0^b,y_0^b) = (0.3\pi,0)$. Table \ref{tab_2_D_SDF} shows the $L^1$, $L^2$, and $L^{\infty}$ errors and corresponding orders of accuracy for the proposed scheme. As indicated, the expected 3rd-order spatial accuracy is achieved through mesh refinement.
	
\begin{table}[!htbp]
	\centering
	\caption{(Swirling deformation flow)  $L^1$, $L^2$, and $L^{\infty}$ errors and corresponding orders of accuracy of the EL-RK-FV-WENO scheme for \eqref{2_D_SDF} with initial condition \eqref{eq:SDF_smooth_initial_condtion} at $t = 1.5$ with CFL = 1.}\label{tab_2_D_SDF}
	\centering
	\begin{tabular}{|c|cc|cc|cc|}
            \hline
            mesh&$L^1$ error&order&$L^2$ error&order&$L^{\infty}$ error&order\\
            \hline
            20$\times$  20&    8.37E-03&   ---&    4.51E-02&   ---&    7.40E-01&   ---\\
            40$\times$  40&    3.85E-03&   1.12&    2.52E-02&   0.84&    4.74E-01&   0.64\\
            80$\times$  80&    1.16E-03&   1.72&    8.06E-03&   1.64&    1.66E-01&   1.52\\
            160$\times$ 160&    2.22E-04&   2.39&    1.50E-03&   2.43&    3.24E-02&   2.35\\
            320$\times$ 320&    3.01E-05&   2.88&    2.03E-04&   2.88&    4.86E-03&   2.74\\	
             \hline
	\end{tabular}
\end{table}

In Figure \ref{fig:SDF_CFL_vs_L2error}, by varying the CFL number while fixing the spatial mesh, we investigate the temporal order of accuracy. For the results using mesh $160\times160$, the proposed scheme demonstrates 3rd-order temporal accuracy and is stable when CFL is less than 21. For the mesh $320\times320$, stability is observed at least up to CFL = 30, corroborating our time-step constraint estimate \eqref{eq:time_constraint_es}.

\begin{figure}[!htbp]
	\centering
	\includegraphics[width=0.45\textwidth]{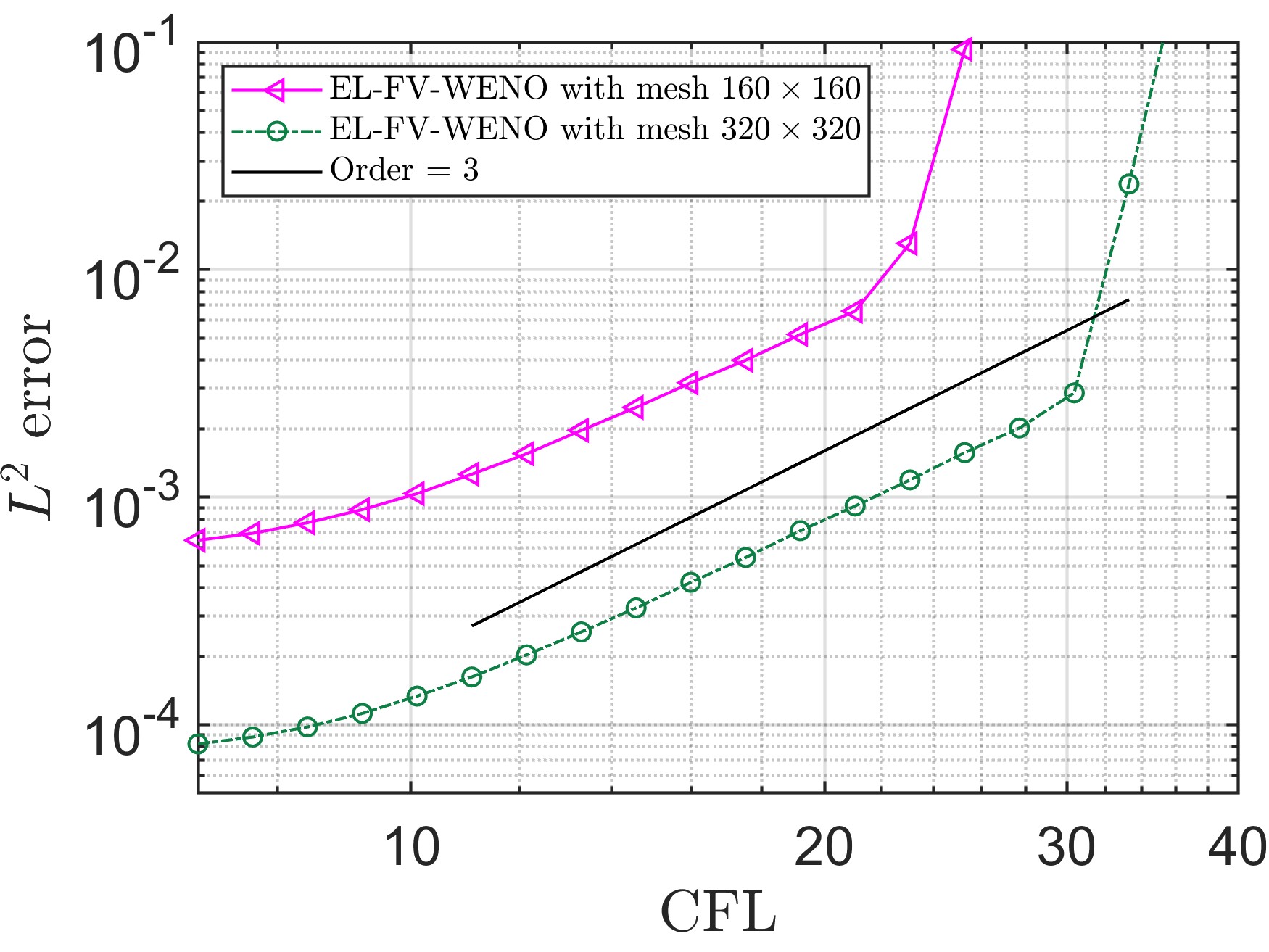}
	\caption{(Swirling deformation flow) Log-log plot of CFL numbers versus $L^2$ errors with fixed meshes $160\times160$ and $320\times320$ at $t = 1.5$ of the EL-RK-FV-WENO scheme.}\label{fig:SDF_CFL_vs_L2error}
\end{figure}	

To validate the non-oscillatory nature of the proposed WENO reconstruction, we consider a discontinuous initial condition featuring a cylinder with a notch, a cone, and a smooth bell as shown in Figure \ref{fig:SDF_initial}. Figures \ref{fig:SDF_0_75} and \ref{fig:SDF_1_5} display the mesh plots and contour plots of the numerical solutions at $t = 0.75$ and $t=1.5$. The swirling deformation flow significantly deforms the solution at half the period ($t=0.75$) and then reforms it back to its initial state at $t=1.5$. The WENO reconstruction method effectively controls numerical oscillations as demonstrated.

\begin{figure}[!htbp]
	\centering
	\subfloat[Mesh plot]{
		\includegraphics[width=0.45\textwidth]{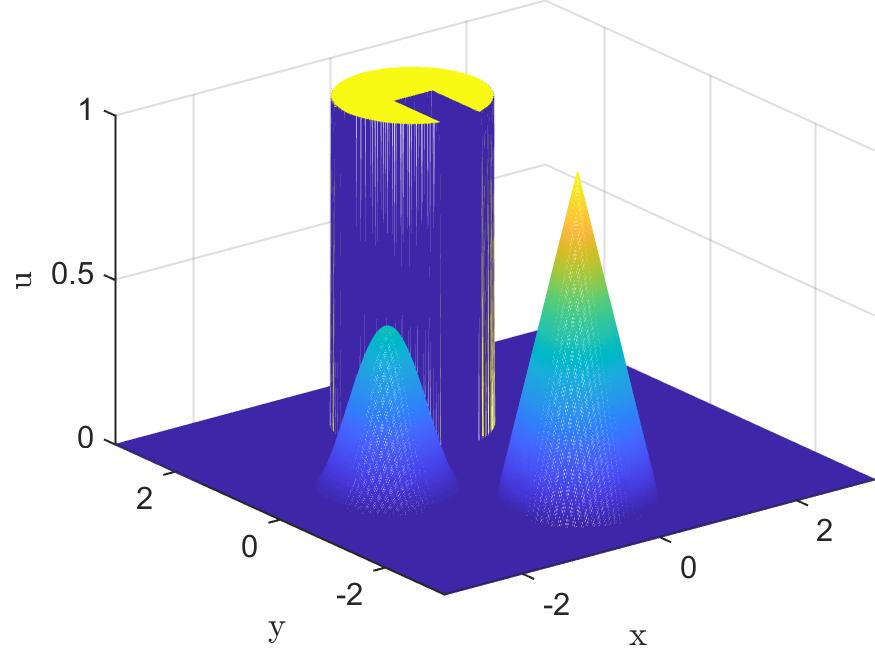}
	}
 	\subfloat[Contour plot]{
		\includegraphics[width=0.45\textwidth]{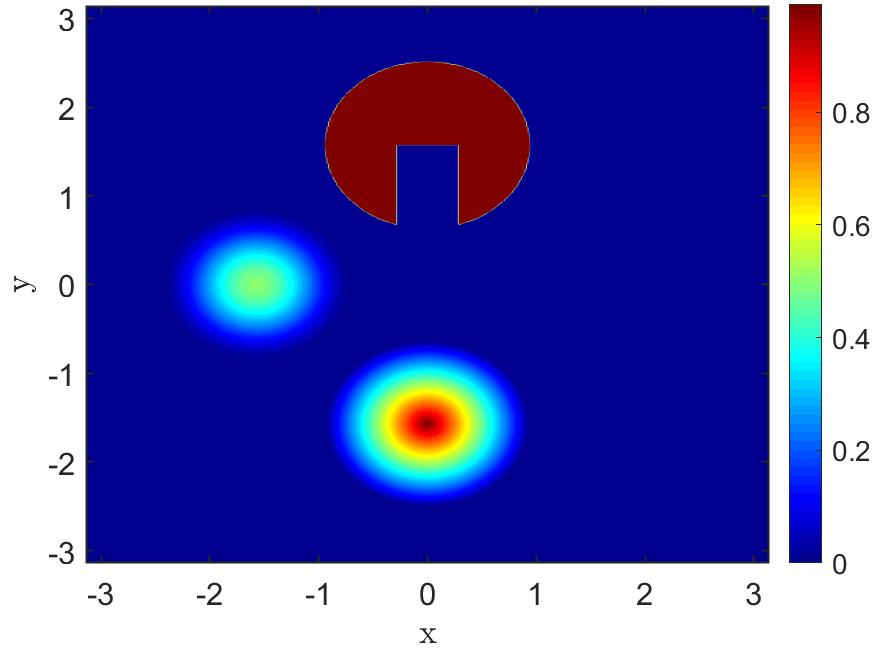}
	}
	\caption{(Swirling deformation flow) Mesh plot and contour plot of a discontinuous initial condition.}\label{fig:SDF_initial}
\end{figure}	

\begin{figure}[!htbp]
	\centering
	\subfloat[Mesh plot]{
		\includegraphics[width=0.45\textwidth]{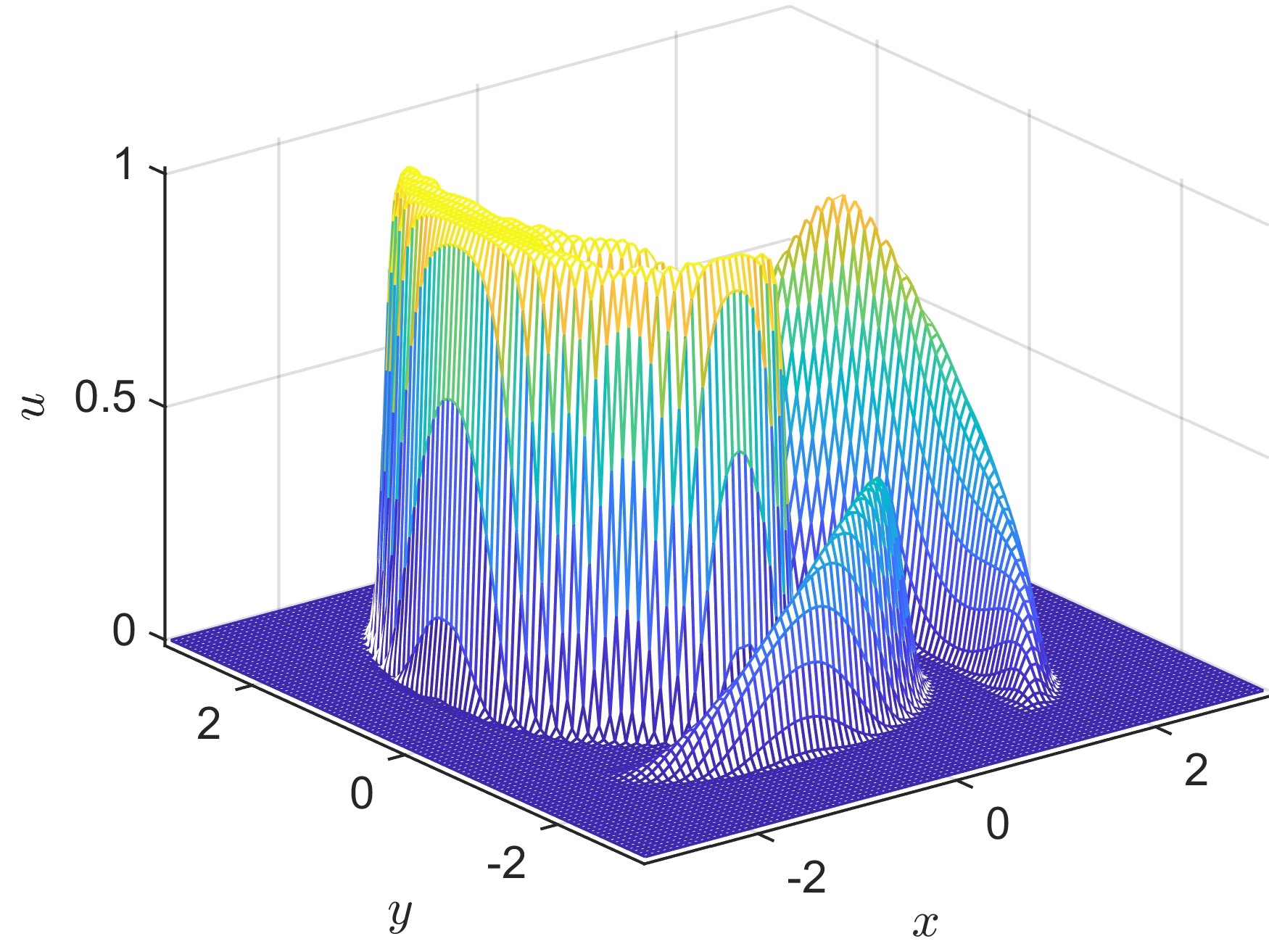}
	}
 	\subfloat[Contour plot]{
		\includegraphics[width=0.45\textwidth]{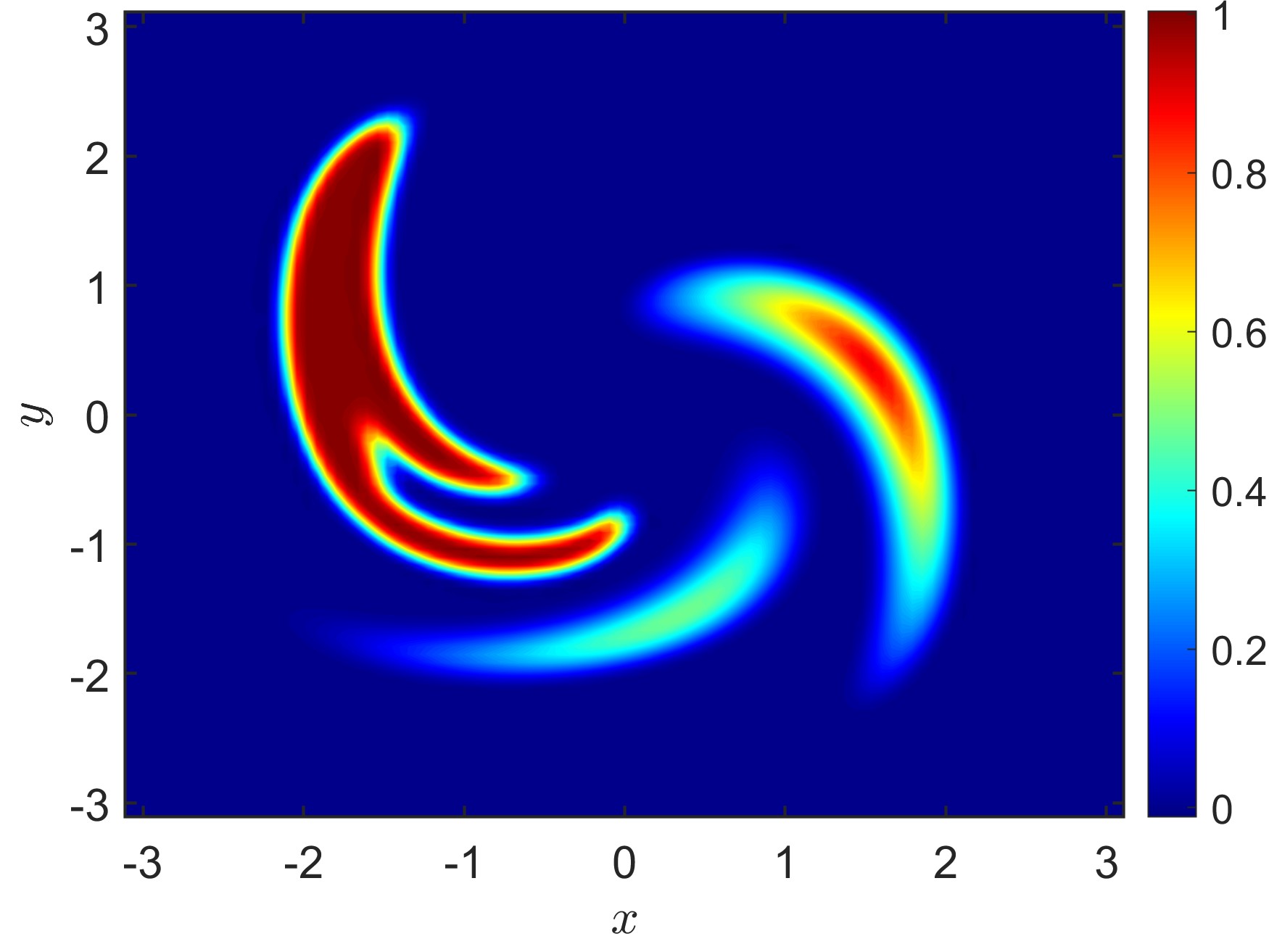}
	}
	\caption{(Swirling deformation flow) Mesh plot and contour plot of the numerical solution of the EL-RK-FV-WENO scheme with CFL = 10.2 and mesh size $100\times100$ at $t=0.75$.}\label{fig:SDF_0_75}
\end{figure}	

\begin{figure}[!htbp]
	\centering
	\subfloat[Mesh plot]{
		\includegraphics[width=0.45\textwidth]{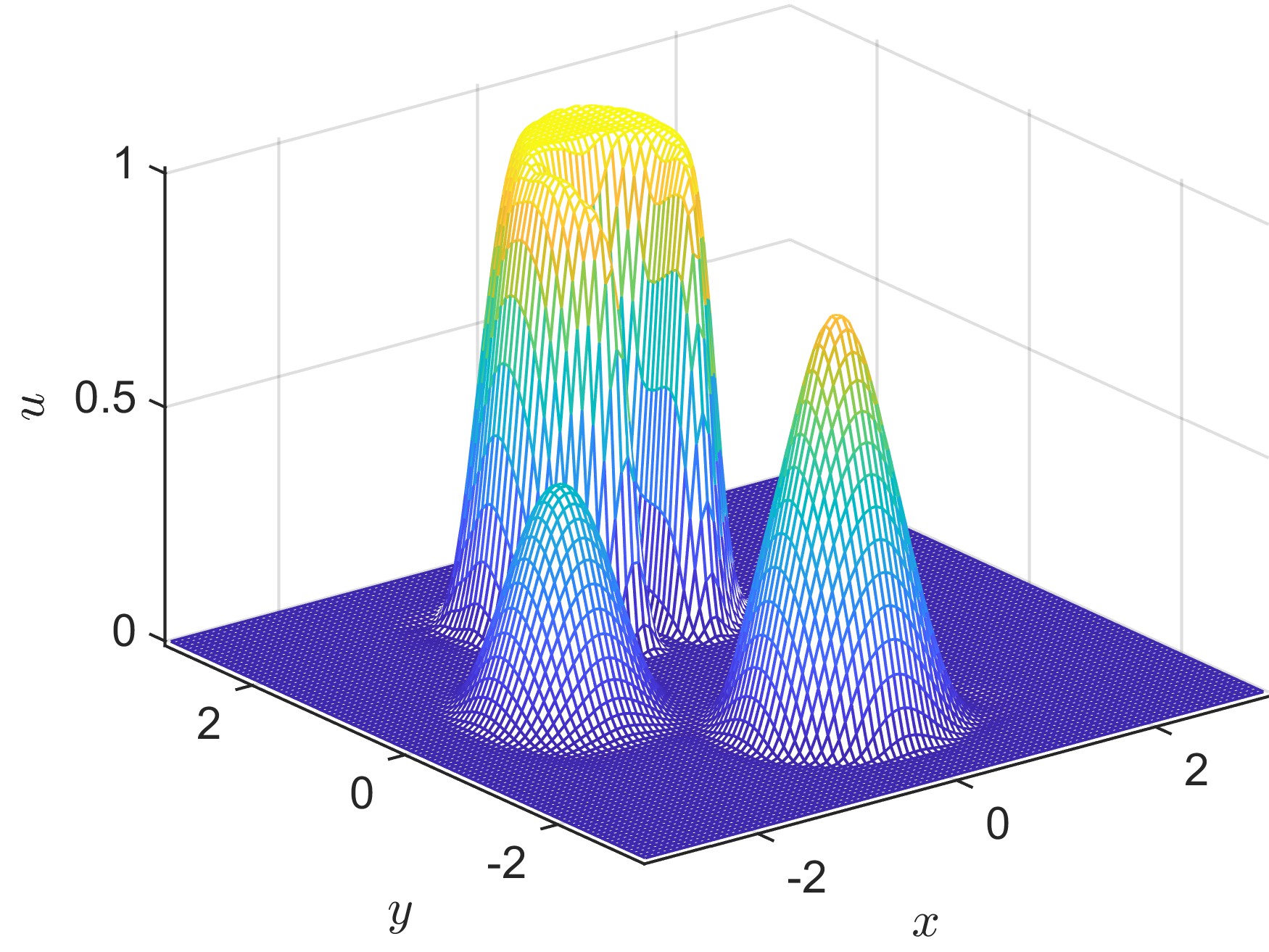}
	}
 	\subfloat[Contour plot]{
		\includegraphics[width=0.45\textwidth]{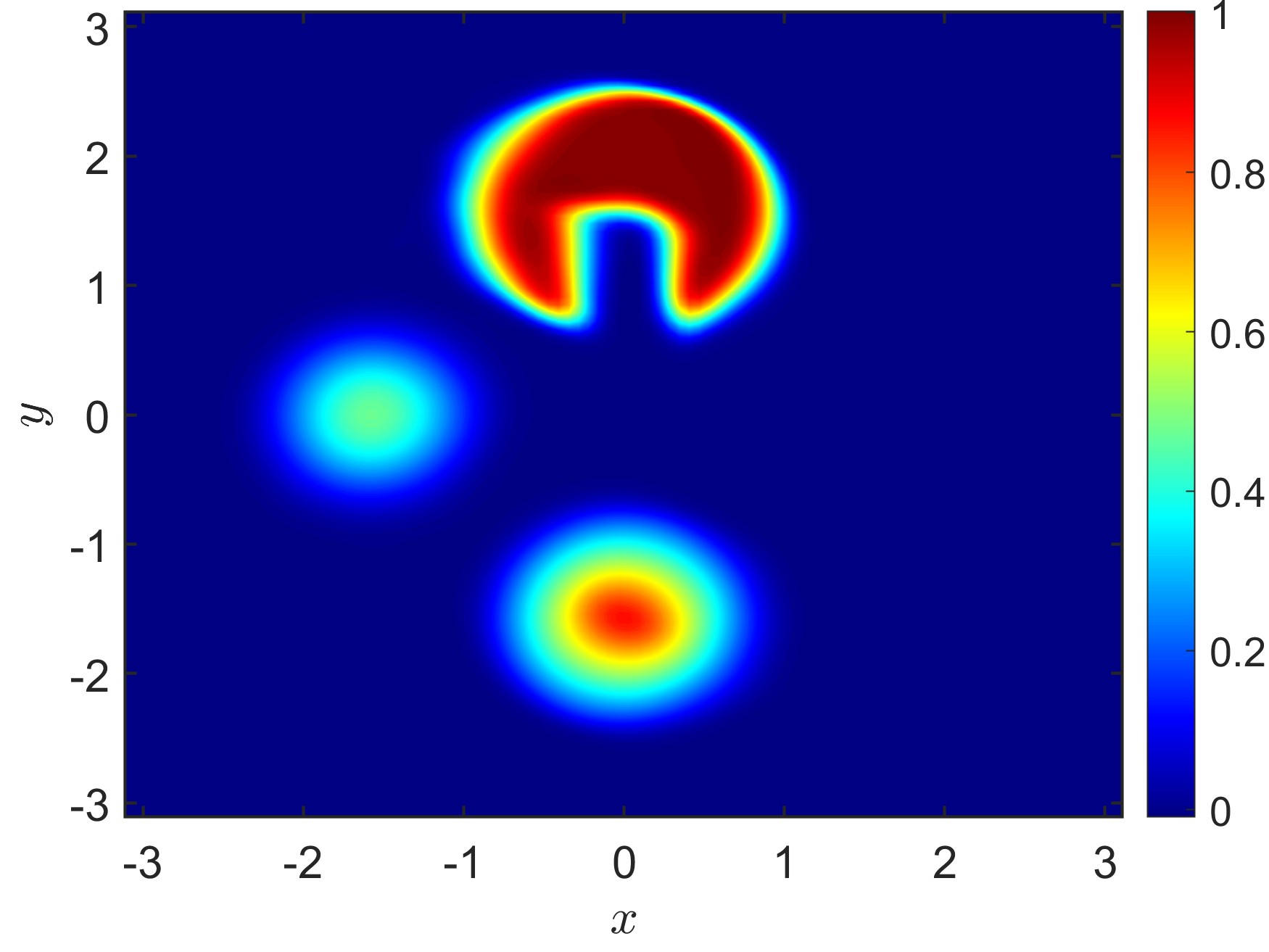}
	}
	\caption{(Swirling deformation flow) Mesh plot and contour plot of the numerical solution of the EL-RK-FV-WENO scheme with CFL = 10.2 and mesh size $100\times100$ at $t=1.5$.}\label{fig:SDF_1_5}
\end{figure}		

In summary, this example illustrates the effectiveness of the proposed EL-RK-FV evolving strategy for the convection terms. This strategy, which involves remapping from Eulerian to Lagrangian spatial approximations, not only achieves high-order spatial and temporal accuracy but also allows for large time-steps while maintaining a non-oscillatory property.

\end{example}


\begin{example}(The 0D2V Leonard-Bernstein linearized Fokker-Planck equation) Consider the following equation:
	\begin{equation}\label{eq:LBFP}
		f_t+\frac{1}{\epsilon}\left((v_x-\overline{v}_x)f\right)_{v_x}-\frac{1}{\epsilon}\left((v_y-\overline{v}_y)f\right)_{v_y} = \frac{1}{\epsilon}D\left(f_{v_xv_x}+f_{v_yv_y}\right),~~~~v_x,v_y\in[-2\pi,2\pi]
	\end{equation}
	with zero boundary conditions and an equilibrium solution by the given Maxwellian:
	\begin{equation}\label{eq:SBFP_initial}
		f_M(v_x,v_y) = \frac{n}{2\pi RT}\exp\left(\frac{(v_x-\overline{v}_x)^2+(v_y-\overline{v}_y)^2}{2RT}\right),
	\end{equation}
	where, $\epsilon = 1$, gas constant $R=1/6$, temperature $T=3$, thermal velocity $v_{th}=\sqrt{2RT}=\sqrt{2D}=1$, number density $n=\pi$, and bulk velocities $\overline{v}_x=\overline{v}_y=1$. For spatial and temporal accuracy tests, we choose $f(v_x,v_y,0)=f_M(v_x,v_y)$. In Table \ref{tab_2_D_LBFP}, we present the $L^1$, $L^2$, and $L^{\infty}$ errors and the corresponding orders of accuracy for the proposed scheme. The results demonstrate a consistent 3rd-order spatial accuracy.
	
	\begin{table}[!htbp]
		\centering
		\caption{(The 0D2V Leonard-Bernstein linearized Fokker-Planck equation)  $L^1$, $L^2$, and $L^{\infty}$ errors and corresponding orders of accuracy of the EL-RK-FV-WENO scheme for \eqref{eq:LBFP} with initial condition \eqref{eq:SBFP_initial} at $t = 0.5$ with CFL = 1.}\label{tab_2_D_LBFP}
		\centering
		\begin{tabular}{|c|cc|cc|cc|}
			\hline
			mesh&$L^1$ error&order&$L^2$ error&order&$L^{\infty}$ error& order\\
			\hline
			20$\times$  20&    2.18E-03&   ---&    1.38E-02&   ---&    2.95E-01&   ---\\
			40$\times$  40&    3.01E-04&   2.85&    2.05E-03&   2.75&    9.50E-02&   1.63\\
			80$\times$  80&    3.33E-05&   3.18&    1.58E-04&   3.70&    6.40E-03&   3.89\\
			160$\times$ 160&    4.10E-06&   3.02&    1.80E-05&   3.13&    2.88E-04&   4.47\\
			320$\times$ 320&    4.77E-07&   3.10&    2.10E-06&   3.11&    3.20E-05&   3.17\\
			\hline											
		\end{tabular}
	\end{table}

By fixing the spatial mesh and varying the CFL number, the temporal order of accuracy is investigated in Figure \ref{fig:LBFP_CFL_vs_L2error}. The proposed scheme exhibits 3rd-order temporal accuracy and allows the use of large time-steps, as evidenced by the stability for high CFL numbers.

	\begin{figure}[!htbp]
		\centering
		\includegraphics[width=0.45\textwidth]{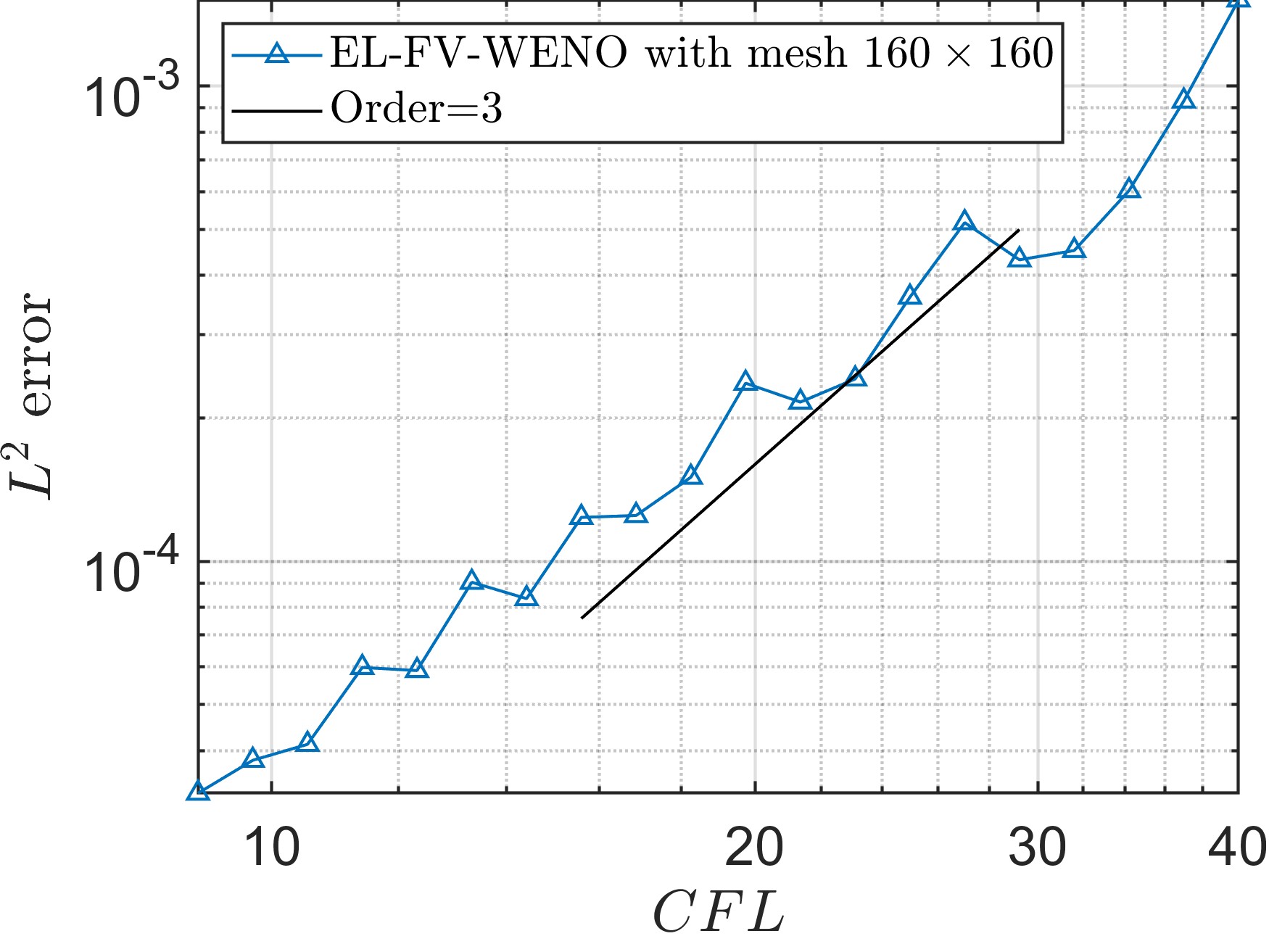}
		\caption{(The 0D2V Leonard-Bernstein linearized Fokker-Planck equation) Log-log plot of CFL numbers versus $L^2$ errors with fixed mesh $160\times160$ at $t = 0.5$ for the EL-RK-FV-WENO scheme.}\label{fig:LBFP_CFL_vs_L2error}
	\end{figure}	

 To test the relaxation of the system, we choose the initial condition $f(v_x, v_y, t=0) = f_{M1}(v_x, v_y) + f_{M2}(v_x, v_y)$, where the parameters of each Maxwellian, $f_{M1}$ and $f_{M2}$, are detailed in Table \ref{tab:Maxwellians}. The two Maxwellians are shifted along the $v_x$ direction with $\overline{v}_y=0$. The evolution of the numerical results, illustrated in Figure \ref{fig:LBFP_contour}, shows that after $t \geq 3$, there is no discernible difference between the numerical solution and the solution at $t=3$, validating the efficacy of the proposed scheme.

 \begin{table}[h!]
\centering
\begin{tabular}{|c|c|c|}
\hline
    & \( f_{M1} \) & \( f_{M2} \) \\
\hline
\( n \) & 1.990964530353041 & 1.150628123236752 \\
\hline
\( \bar{v}_x \) & 0.4979792385268875 & -0.8616676237412346 \\
\hline
\( \bar{v}_y \) & 0 & 0 \\
\hline
\( T \) & 2.46518981703837 & 0.4107062104302872 \\
\hline
\end{tabular}

\caption{(The 0D2V Leonard-Bernstein linearized Fokker-Planck equation) Settings of the Maxwellians $f_{M1}$ and $f_{M2}$.}
\label{tab:Maxwellians}
\end{table}

	\begin{figure}[!htbp]
		\centering
		\subfloat[$t=0$]{
			\includegraphics[width=0.3\textwidth]{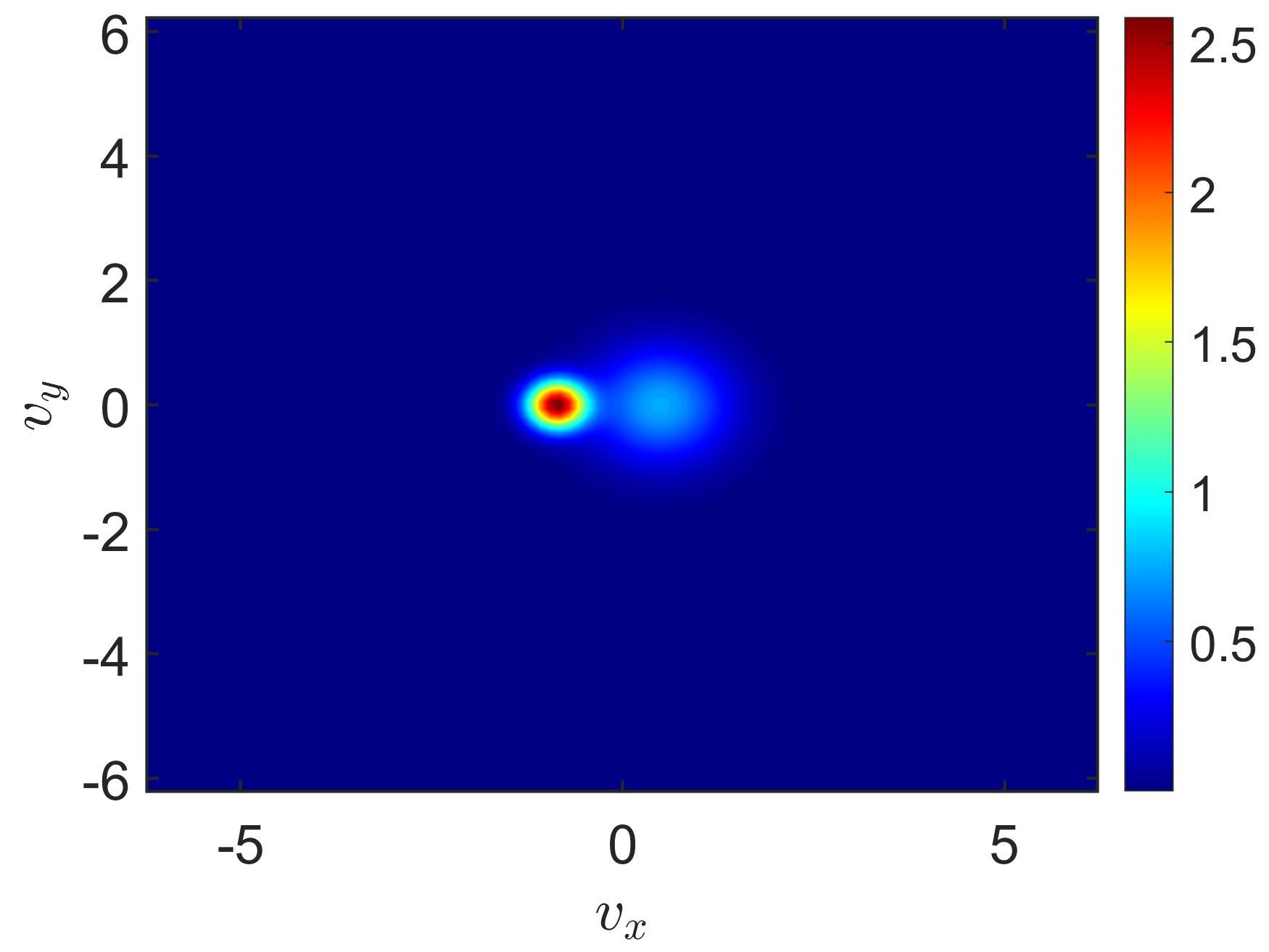}
		}
		\subfloat[$t=0.2$]{
			\includegraphics[width=0.3\textwidth]{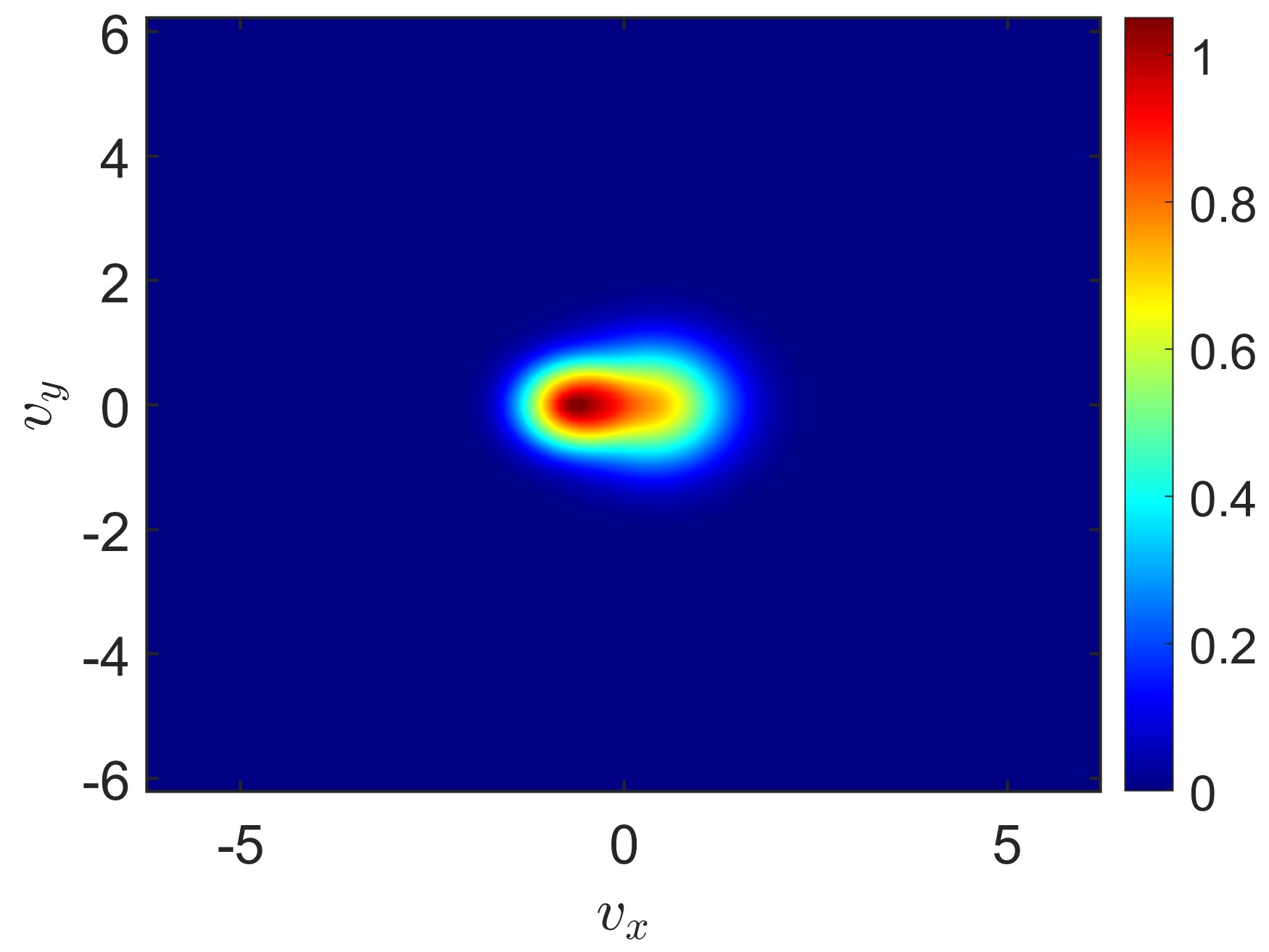}
		}
		\subfloat[$t=0.4$]{
			\includegraphics[width=0.3\textwidth]{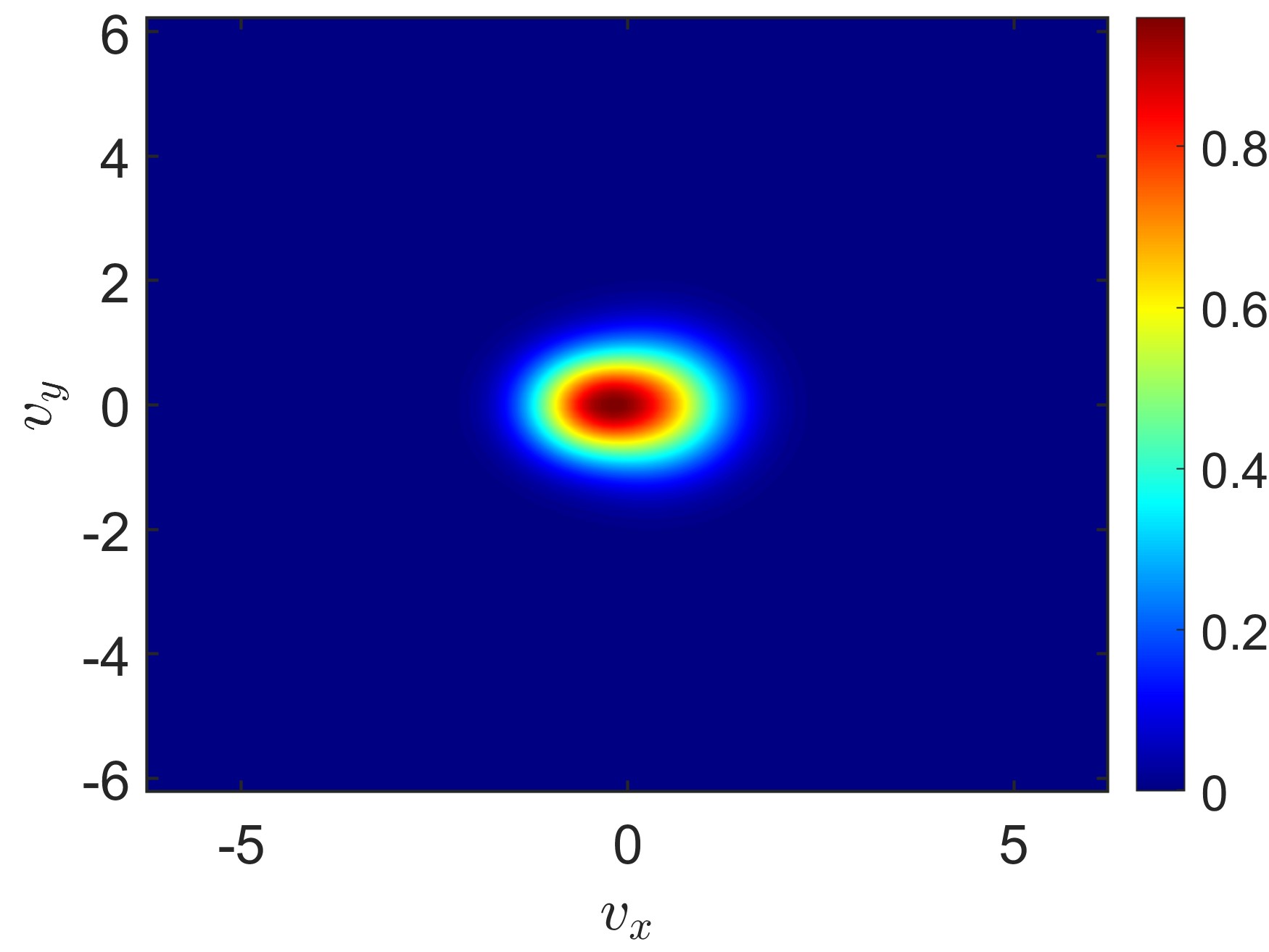}
		}
            \vspace{-0.4cm}
		\subfloat[$t=0.8$]{
			\includegraphics[width=0.3\textwidth]{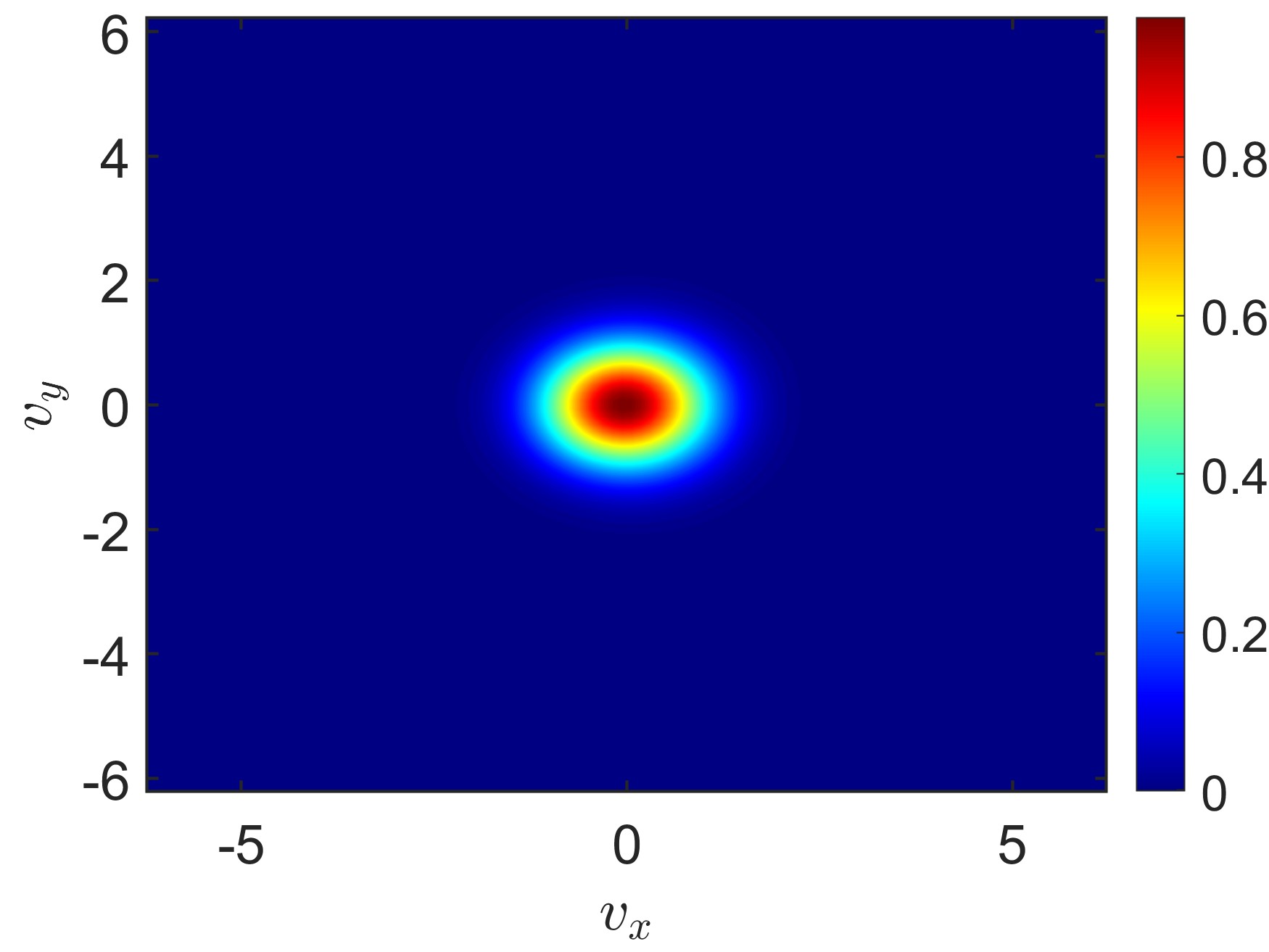}
		}
		\subfloat[$t=3$]{
			\includegraphics[width=0.3\textwidth]{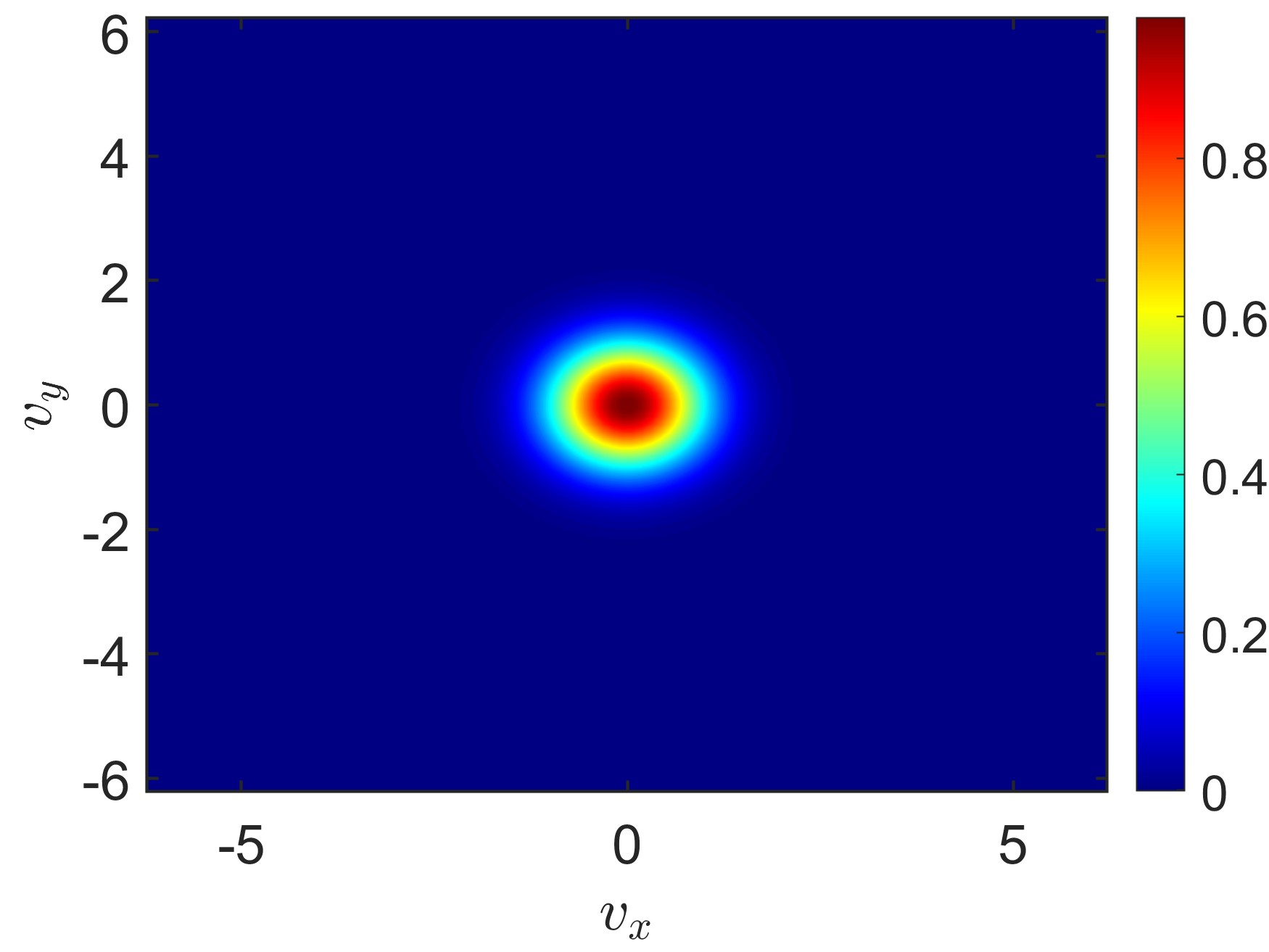}
		}
		\subfloat[$t=20$]{
			\includegraphics[width=0.3\textwidth]{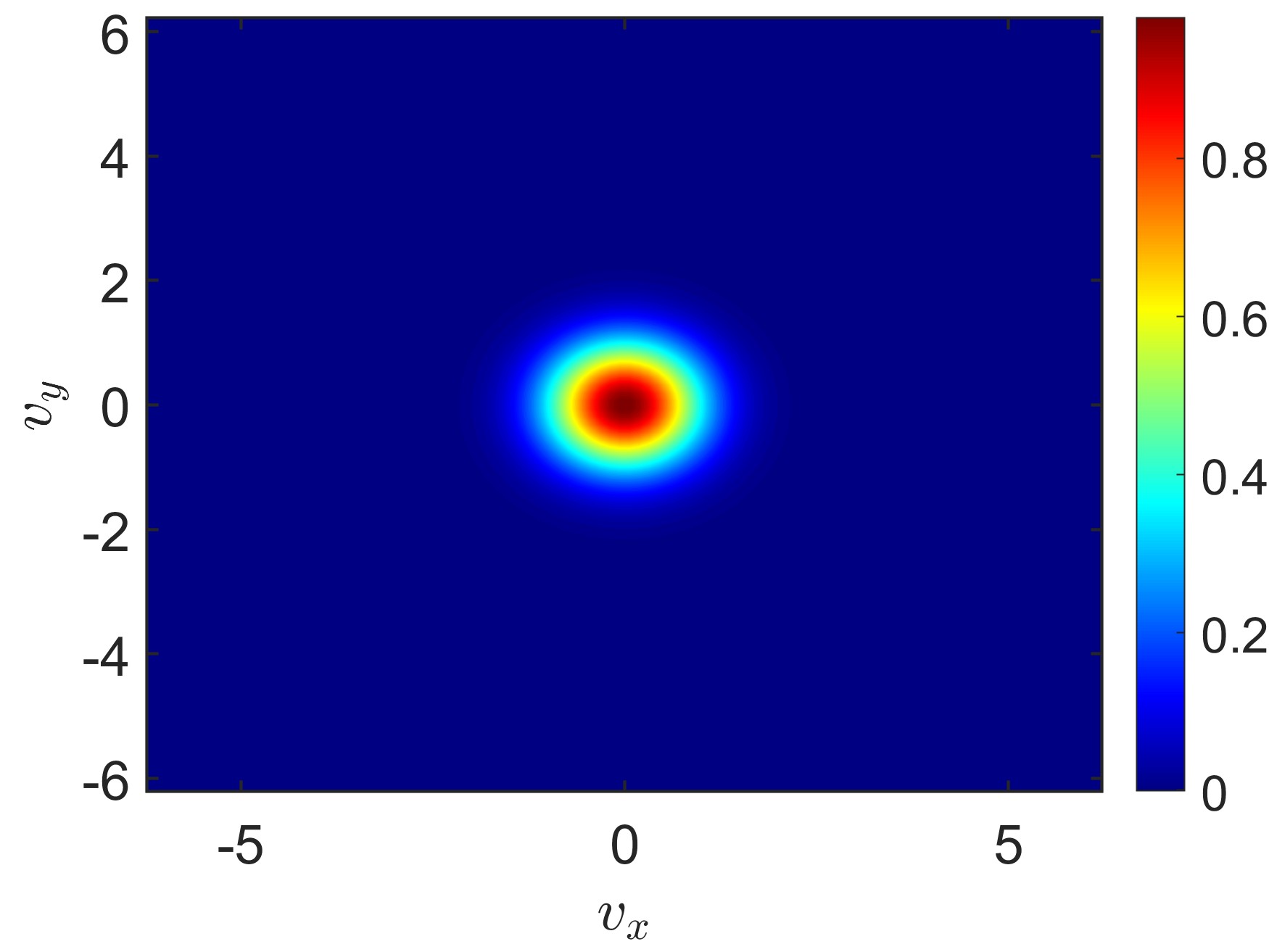}
		}
		\caption{(The 0D2V Leonard-Bernstein linearized Fokker-Planck equation) Contour plots of the numerical results of the EL-RK-FV-WENO scheme with CFL = 3 and mesh size  $100\times100$ at $t=0$ (initial condition), $t=0.2$, $t=0.4$, $t=0.6$, $t=3$, and $t=20$.}\label{fig:LBFP_contour}
	\end{figure}	

In \Cref{fig:LBFP_conservedquanties}, we present the capability of the proposed scheme in preserving various physical conservative quantities. The results indicate that while the scheme is effective in conserving mass, it does not maintain the other quantities to the machine precision.

	\begin{figure}[!htbp]
	\centering
	\subfloat{
		\includegraphics[width=0.4\textwidth]{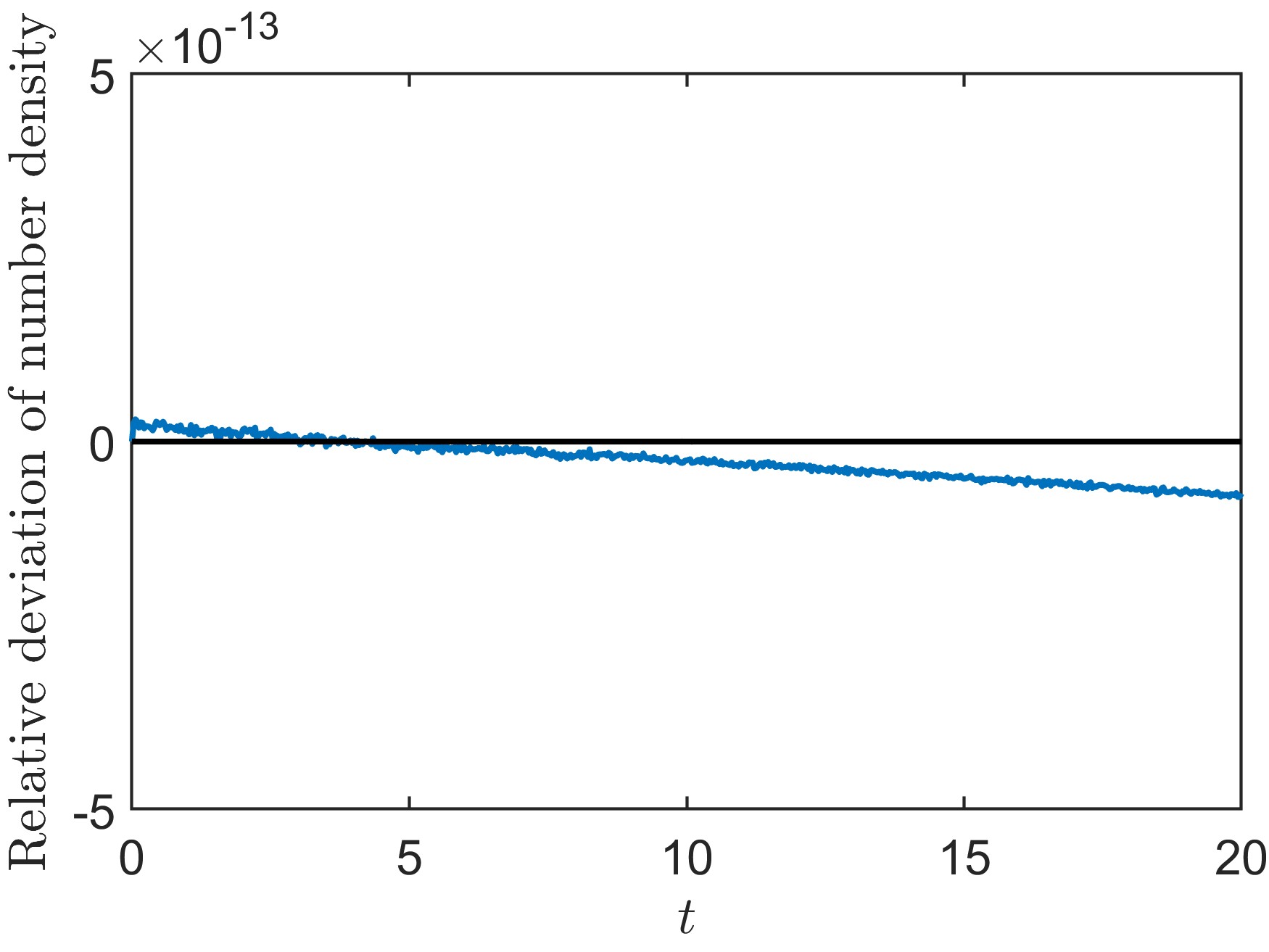}
	}
        \subfloat{
		\includegraphics[width=0.4\textwidth]{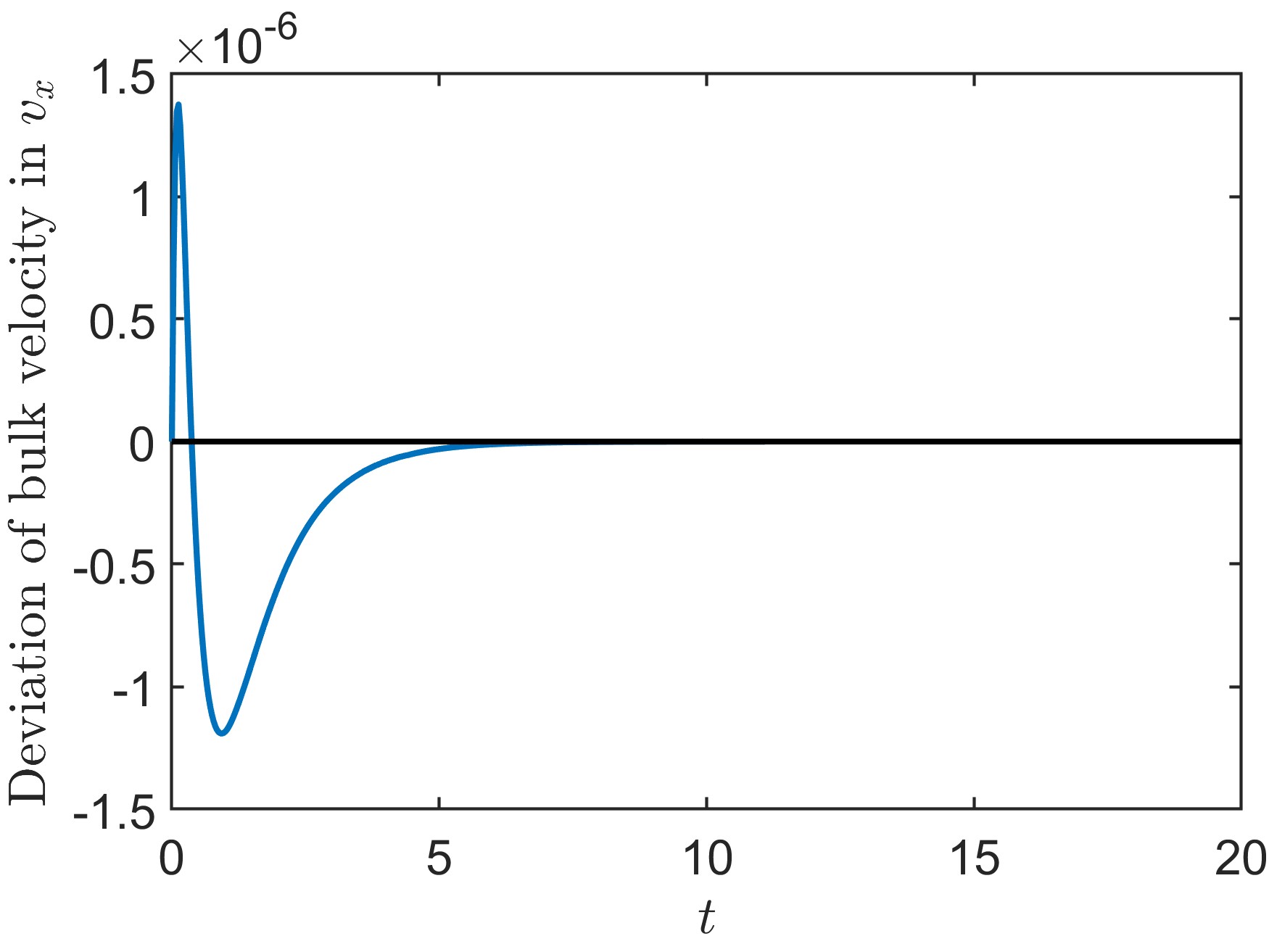}
	}
        \vspace{-0.5cm}
        \subfloat{
		\includegraphics[width=0.4\textwidth]{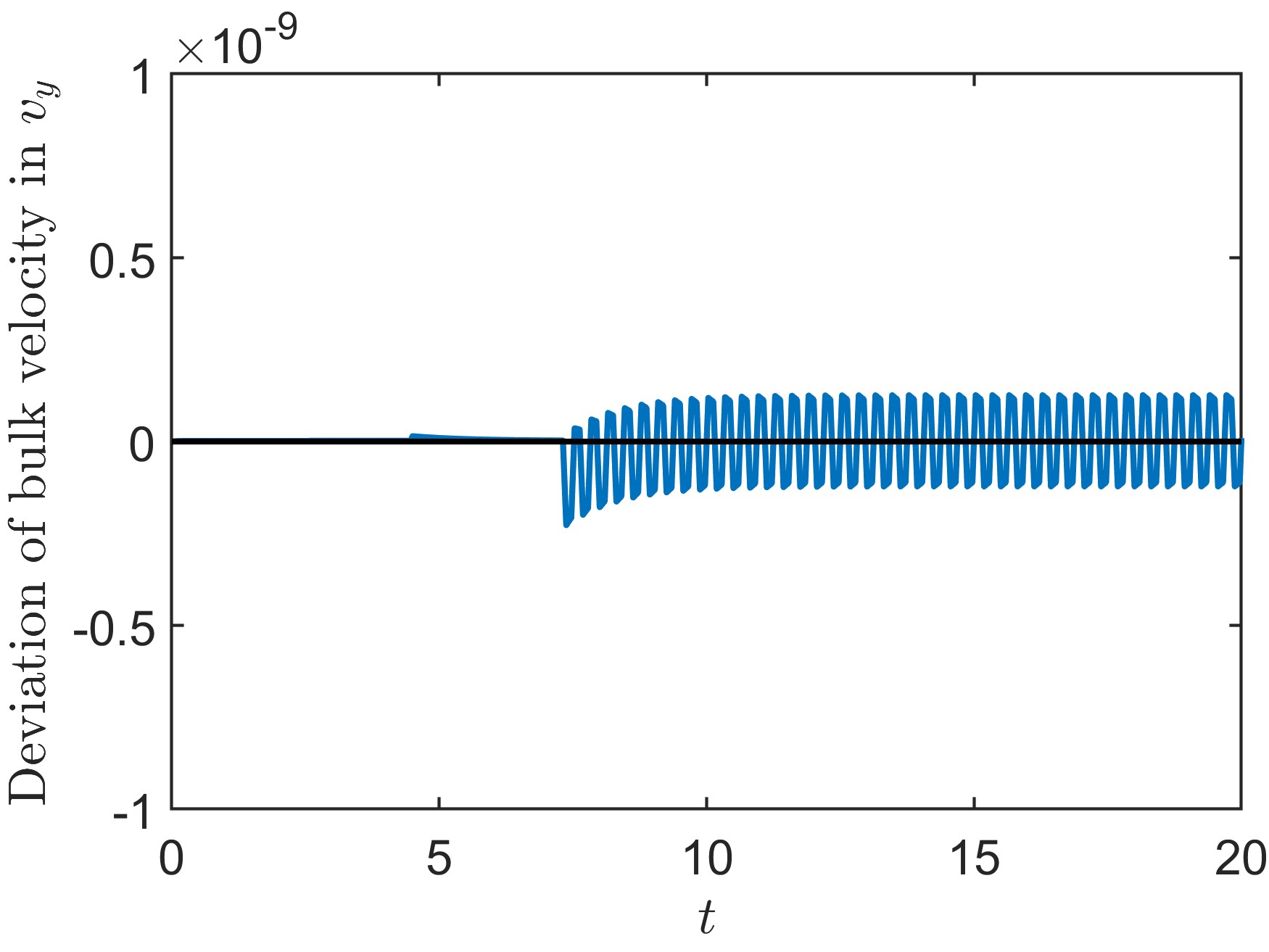}
	}
        \subfloat{
		\includegraphics[width=0.4\textwidth]{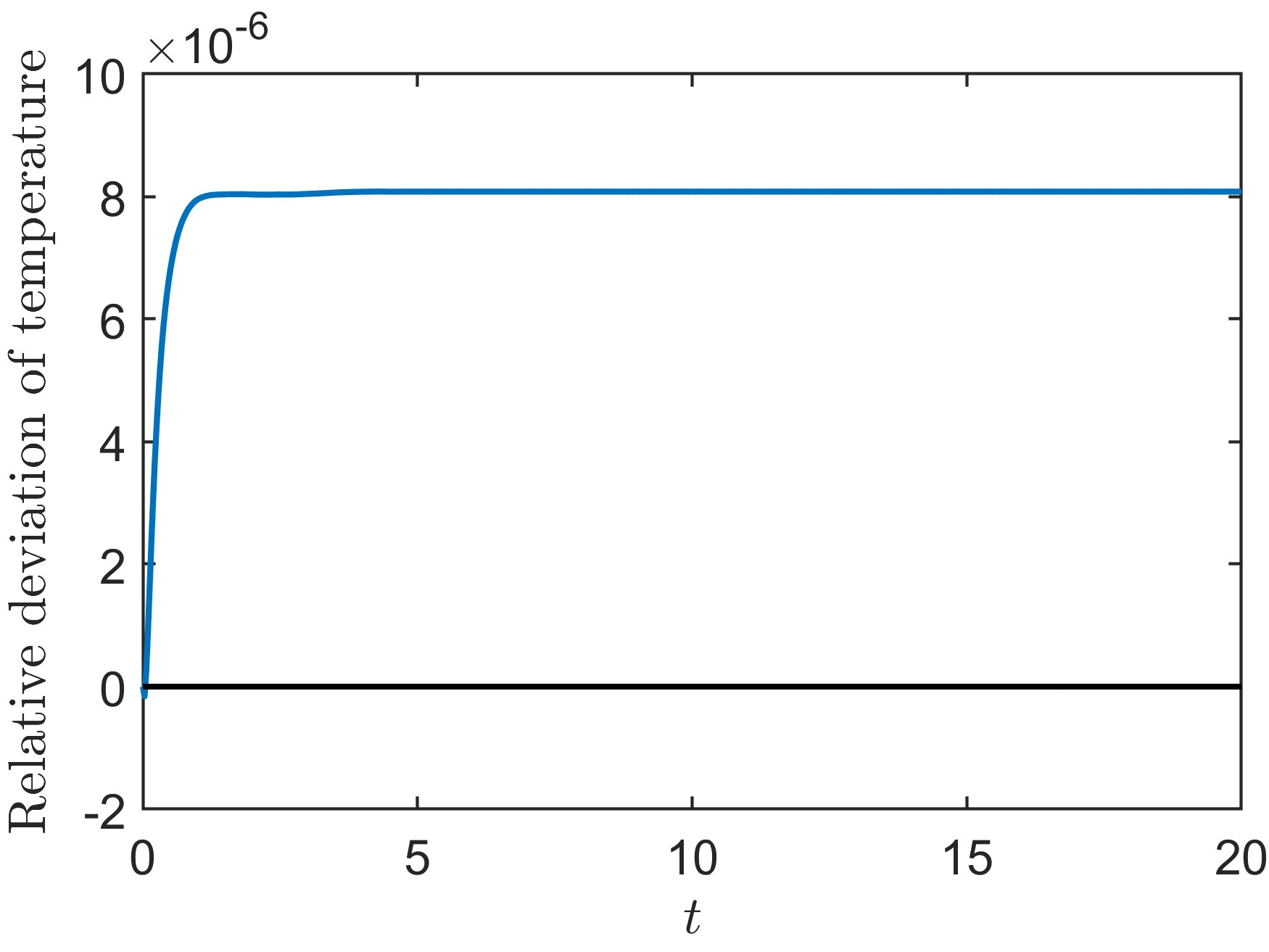}
	}
	\caption{(The 0D2V Leonard-Bernstein linearized Fokker-Planck equation) Relative deviation (or deviation) of  number density, bulk velocity in $v_x$, bulk velocity in $v_y$, and temperature for the EL-RK-FV-WENO scheme with
CFL = 10.2 and with mesh $160\times160$ from $t=0$ to $t=20$ .}\label{fig:LBFP_conservedquanties}
	\end{figure}	

To encapsulate, this example primarily demonstrates the effectiveness of the IMEX temporal discretization, which combines the implicit solver for the diffussion term and the EL evolving strategy for the convection terms, enabling the use of large time-steps.
\end{example}

\subsection{Nonlinear models}

\begin{example}(Kelvin-Helmholtz instability problem) Consider the guiding center Vlasov model \cite{shoucri_two-level_1981,crouseilles_conservative_2010}:
\begin{equation}\label{eq:GC}
    \begin{split}
        \rho_t+\nabla\cdot(\mathbf{E}^{\perp}\rho)=0,\\
        -\Delta \Phi=\rho,~~ \mathbf{E}^{\perp}=\left(-\Phi_y,\Phi_x\right),
    \end{split}
\end{equation}
with the periodic boundary condition and the following initial condition:
	\begin{equation}\label{GC_KHI}
		\begin{split}
			\rho(x,y,0) = \text{sin}(y) + 0.015\text{cos}(0.5x),~~x\in[0,4\pi],~~y\in[0,2\pi],
		\end{split}
	\end{equation}
 where $\rho$ is the charge density and $\mathbf{E}$ is the electric field. We validate the 3rd-order spatial and temporal accuracy of the proposed scheme in \Cref{tab_2_D_KHI} and \Cref{fig:KHI_CFL_vs_L2error}. Additionally, the stability of the scheme with large time-steps up to CFL = 50 is verified.

\begin{table}[!htbp]
	\centering
	\caption{ (Kelvin-Helmholtz instability problem)  $L^1$, $L^2$, and $L^{\infty}$ errors and corresponding orders of accuracy of the EL-RK-FV-WENO scheme for \eqref{eq:GC} with initial condition \eqref{GC_KHI} at $t = 5$ with CFL = 1.}\label{tab_2_D_KHI}
	\centering
	\begin{tabular}{|c|cc|cc|cc|}
		\hline
		mesh&$L^1$ error&order&$L^2$ error&order&$L^{\infty}$ error&order\\
		\hline
            16$\times$  16&    6.79E-03&   ---&    1.11E-02&   ---&    6.68E-02&   ---\\
            32$\times$  32&    5.48E-04&   3.63&    9.46E-04&   3.55&    1.18E-02&   2.50\\
            64$\times$  64&    4.35E-05&   3.66&    7.15E-05&   3.73&    1.43E-03&   3.05\\
            128$\times$ 128&    4.37E-06&   3.31&    6.45E-06&   3.47&    1.98E-04&   2.85\\
            256$\times$ 256&    5.23E-07&   3.06&    6.72E-07&   3.26&    1.73E-05&   3.51\\
		\hline											
	\end{tabular}
\end{table}

\begin{figure}[!htbp]
	\centering
	\includegraphics[width=0.45\textwidth]{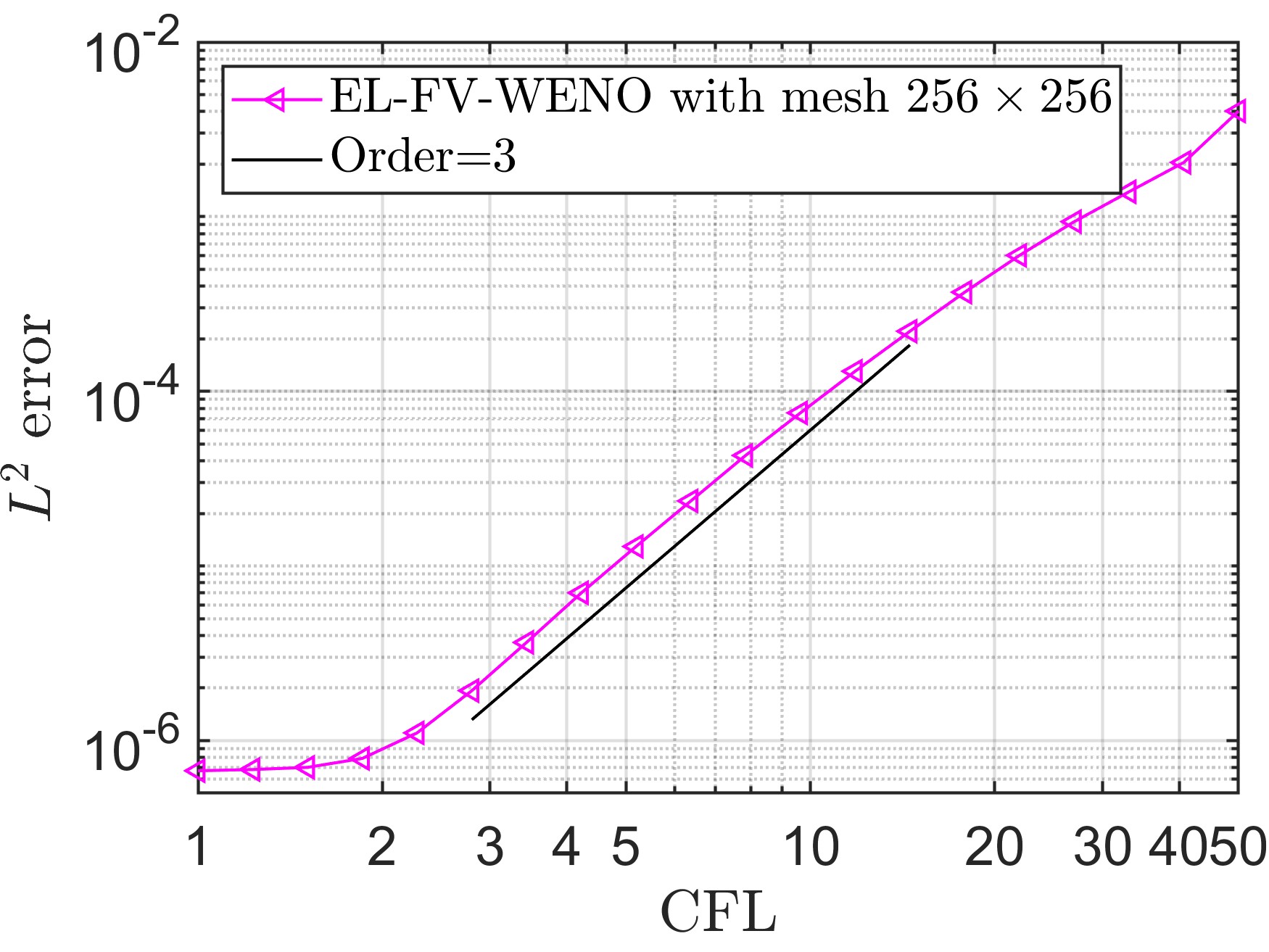}
	\caption{(Kelvin-Helmholtz instability problem) Log-log plot of CFL numbers versus $L^2$ errors with fixed mesh $256\times256$ at $t = 5$ of the EL-RK-FV-WENO scheme.}\label{fig:KHI_CFL_vs_L2error}
\end{figure}	

 In \Cref{fig:KHI_meshplot}, we display the mesh plot and contour plot of the numerical solution of the EL-RK-FV-WENO scheme at $t=40$. The result is comparable with the one of our fourth-order SL-FV-WENO scheme in \cite{zheng_fourth-order_2022}.
\begin{figure}[!htbp]
	\centering
	\subfloat[Mesh plot]{
		\includegraphics[width=0.45\textwidth]{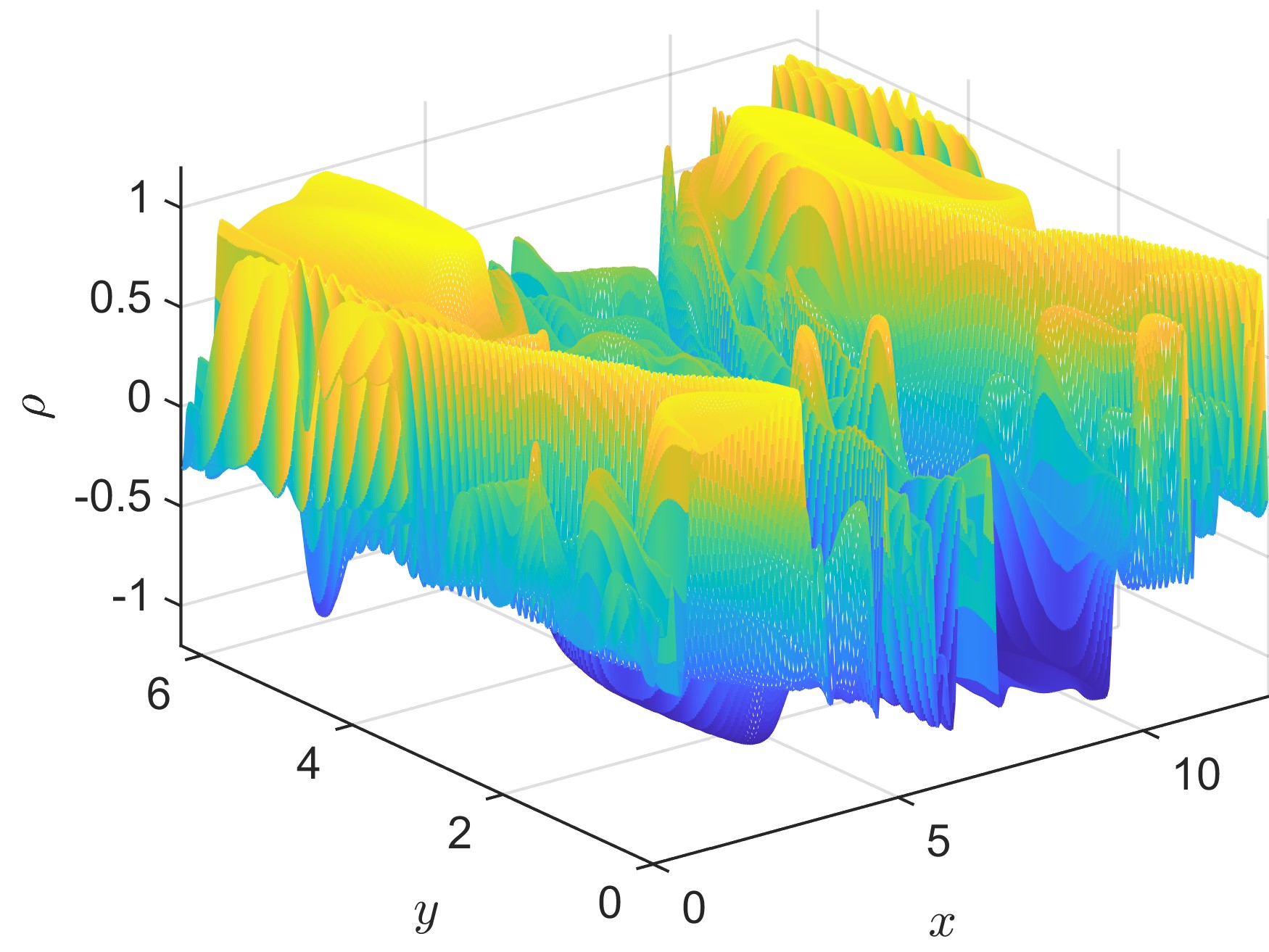}
	}
 	\subfloat[Contour plot]{
		\includegraphics[width=0.45\textwidth]{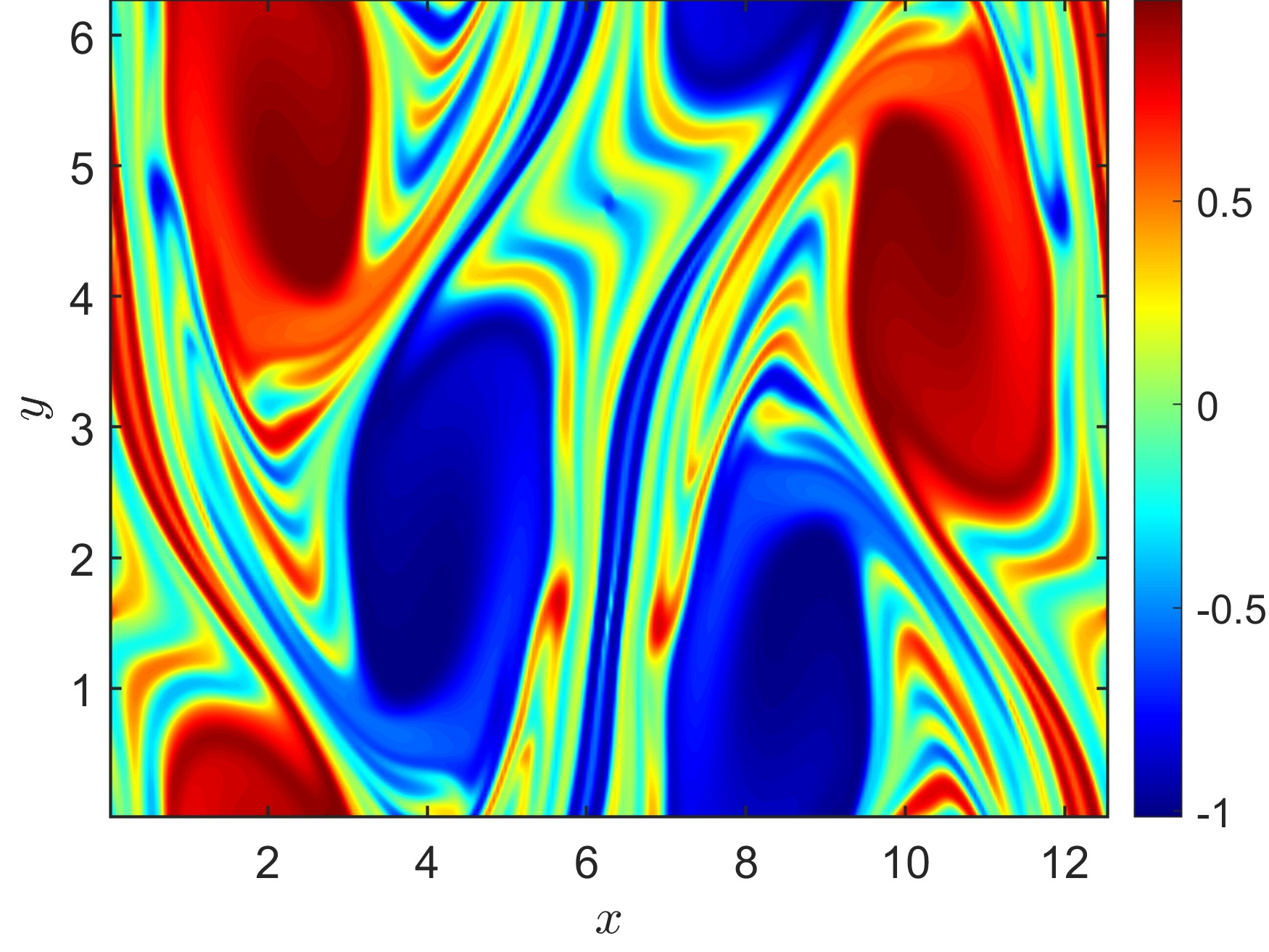}
	}
	\caption{(Kelvin-Helmholtz instability problem) Mesh plot and contour plot of the numerical solution of the EL-RK-FV-WENO scheme with CFL = 10.2 and with mesh $256\times256$ at $t=40$.}\label{fig:KHI_meshplot}
\end{figure}	

In \Cref{fig:KHI_conserve}, we show the deviation of mass, relative deviation of energy
and entropy of the proposed scheme from $t=0$ to $t=50$. As shown, the proposed scheme is mass conservative. The magnitudes of relative deviation of energy and entropy results are comparable with the ones in \cite{zheng_fourth-order_2022}.

\begin{figure}[!htbp]
	\centering
	\subfloat[Deviation of mass]{
		\includegraphics[width=0.3\textwidth]{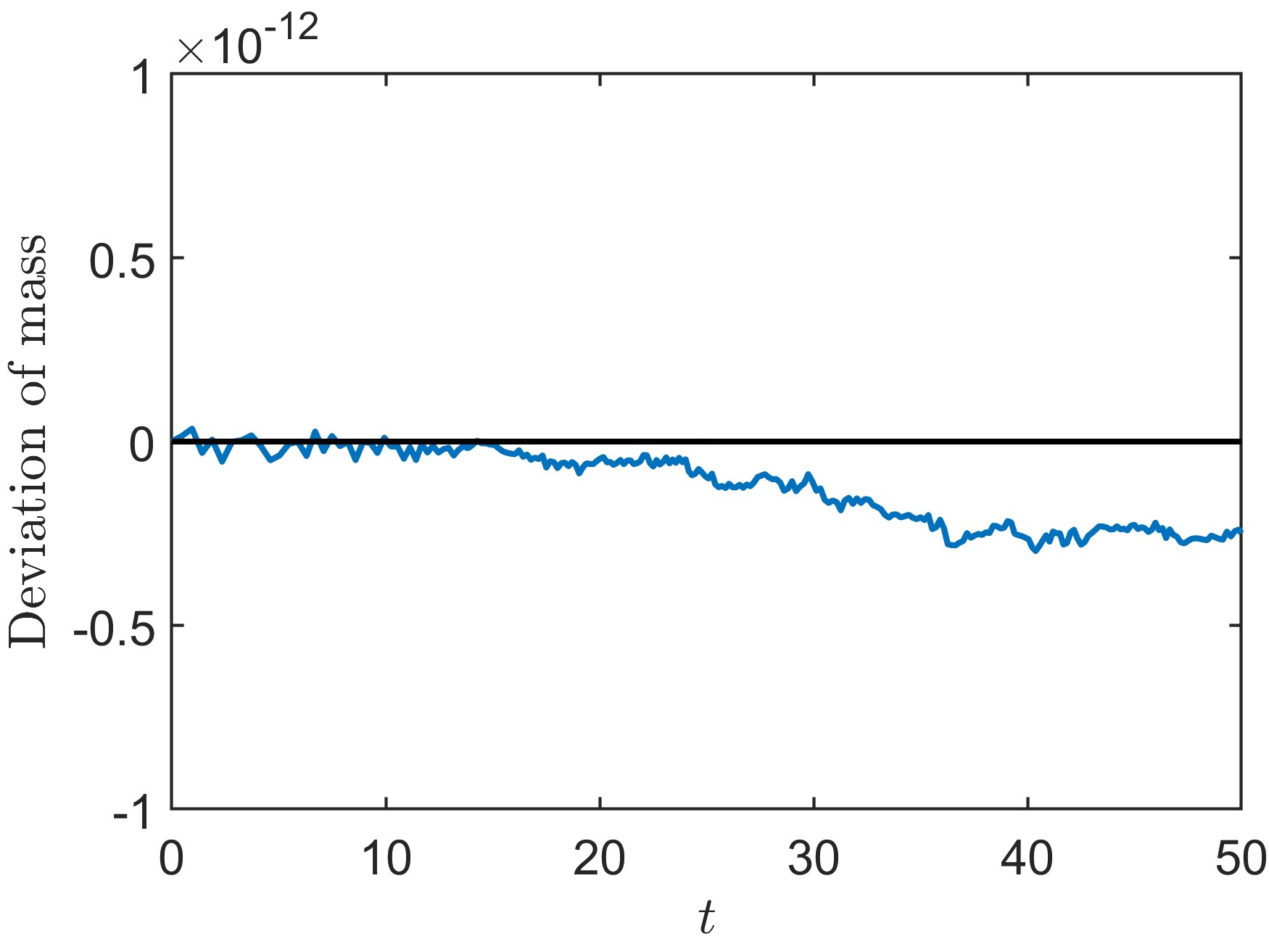}
	}
	\subfloat[Relative deviation of energy]{
		\includegraphics[width=0.3\textwidth]{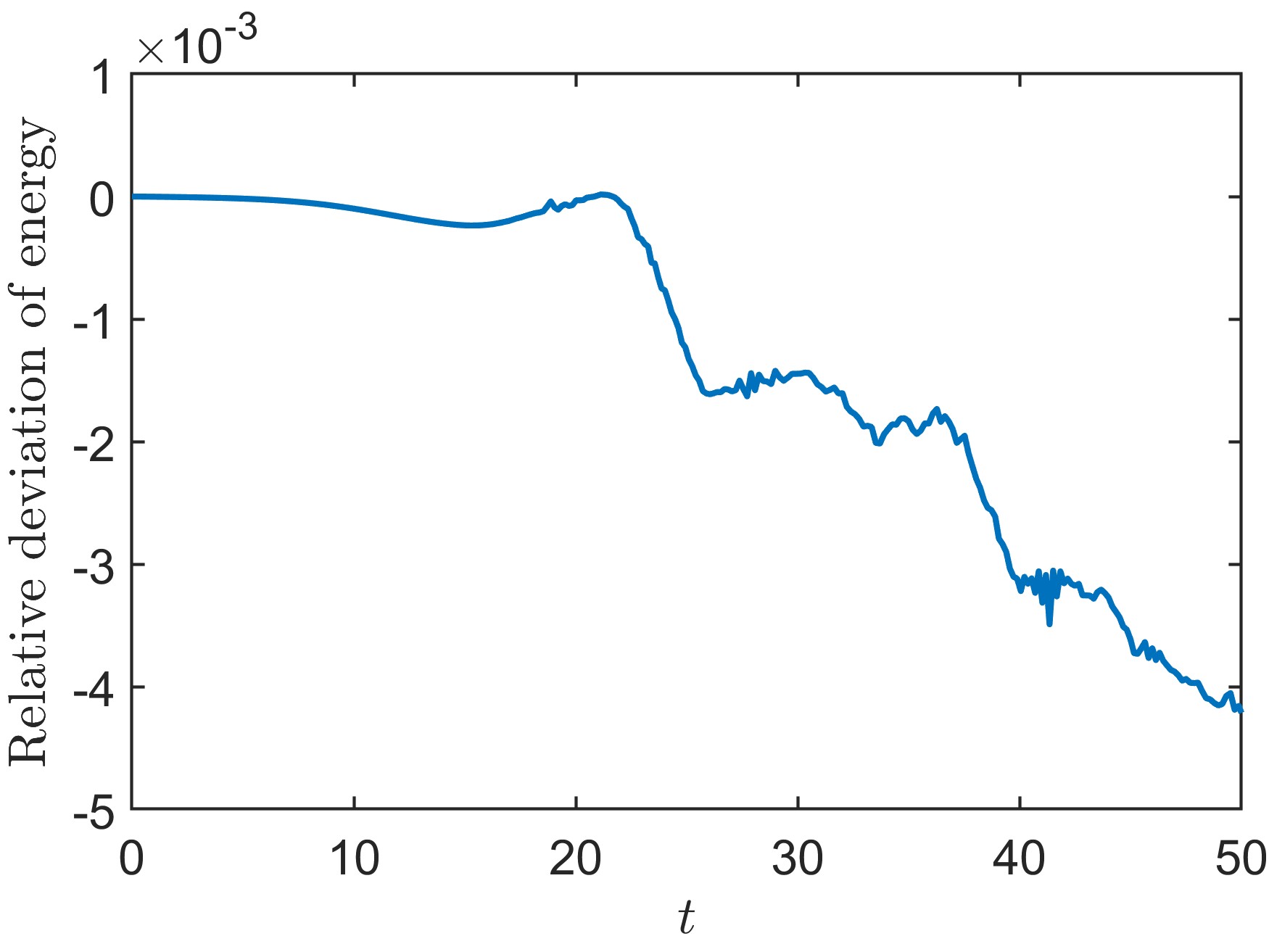}
	}
	\subfloat[Relative deviation of entropy]{
		\includegraphics[width=0.3\textwidth]{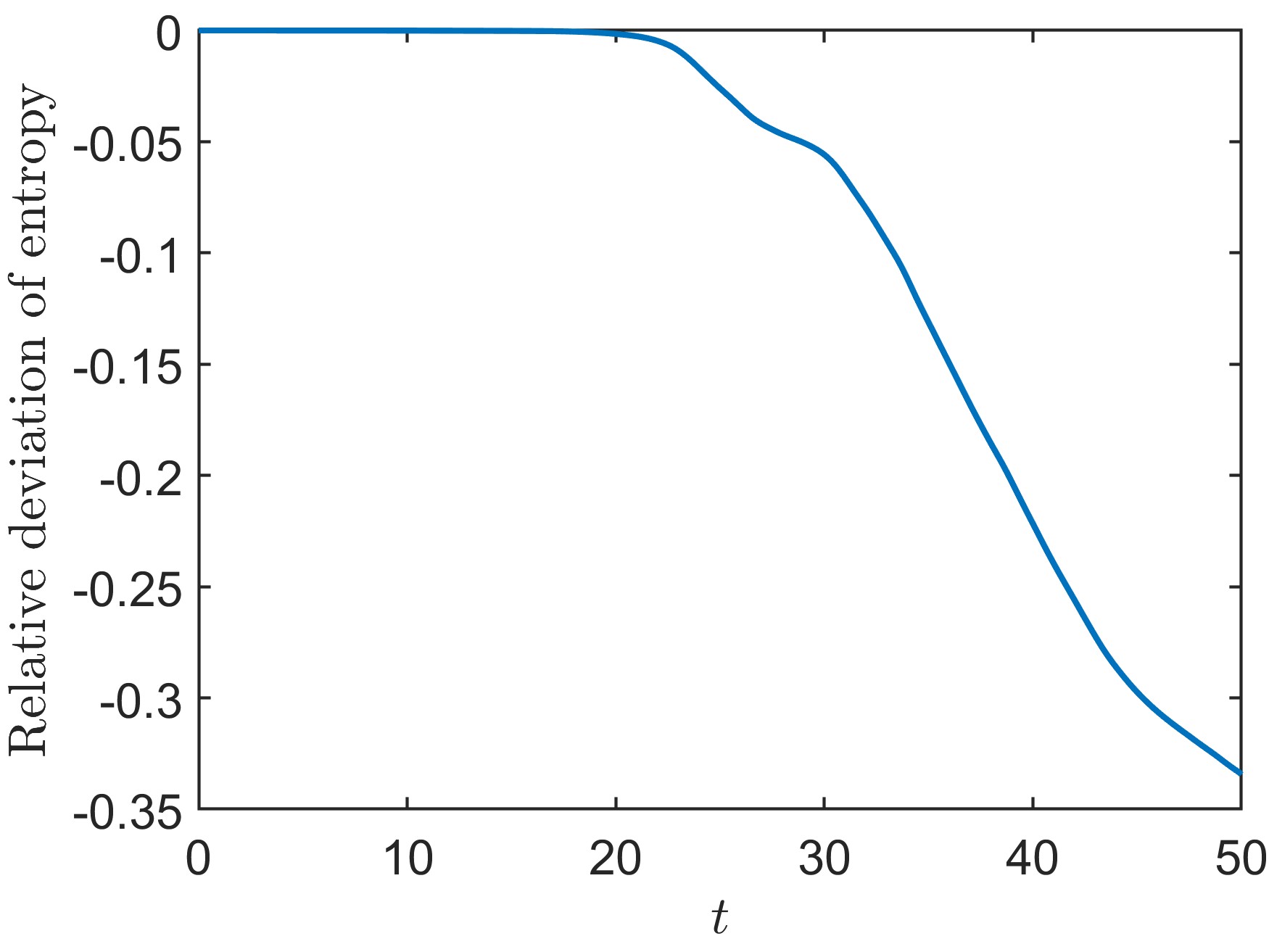}
	}
	\caption{(Kelvin-Helmholtz instability problem) Deviation of mass, relative deviation of energy and entropy for the EL-RK-FV-WENO scheme with
CFL = 10.2 and with mesh $256\times256$ from $t=0$ to $t=50$ .}\label{fig:KHI_conserve}
\end{figure}	

This nonlinear example effectively showcases the success of the generalization strategy for nonlinear model outlined in \Cref{sec:nonlinear}, which is achieved with minimal additional cost.

\end{example}

\begin{example}(Incompressible Navier-Stokes equations) The governing equations are as follows:
    \begin{equation}\label{eq:INS}
        \begin{split}
            \omega_t+(u\omega)_x+(v\omega)_y=\nu(\omega_{xx}+\omega_{yy}),\\
            \Delta\psi=\omega,~~\left(u,v\right)=\left(\psi_y,\psi_x\right),
        \end{split}
    \end{equation}
    where $\omega$ is the vorticity of the flow, $\left(u,v\right)$ is the velocity field, and $\nu$ is the kinematic viscosity, which is set to be $\frac{1}{100}$. We first consider an initial condition given by:
    \begin{equation}\label{eq:INS_init1}
    \omega(x,y,0)=-2\sin(x)\sin(y),\quad x\in [0,2\pi],~~y\in [0,2\pi]
    \end{equation}
    with the exact solution $\omega(x,y,t)=-2\sin(x)\sin(y)\exp(-2t\nu)$. Similarly, 3rd-order spatial and temporal order of accuracy of validated in \Cref{tab_2_D_IE} and \Cref{fig:IEIMEX_CFL_vs_L2error} respectively. For this problem, we observe that large time-steps are allowed for the proposed scheme up to CFL = 40.

	\begin{table}[!htbp]
	\centering
	\caption{(Incompressible Navier-Stokes equations) $L^1$, $L^2$, and $L^{\infty}$ errors and corresponding orders of accuracy of the EL-RK-FV-WENO scheme for \eqref{eq:INS} with initial condition \eqref{eq:INS_init1} at $t = 0.5$ with CFL = 1.}\label{tab_2_D_IE}
	\centering
	\begin{tabular}{|c|cc|cc|cc|}
		\hline
		mesh&$L^1$ error&order&$L^2$ error&order&$L^{\infty}$ error& order\\
		\hline
  		16$\times$  16&    3.90E-03&   ---&    4.65E-03&   ---&    1.14E-02&   ---\\
		32$\times$  32&    4.88E-04&   3.00&    5.82E-04&   3.00&    1.45E-03&   2.98\\
		64$\times$  64&    6.09E-05&   3.00&    7.27E-05&   3.00&    1.82E-04&   2.99\\
		128$\times$ 128&    7.61E-06&   3.00&    9.08E-06&   3.00&    2.29E-05&   2.99\\
		256$\times$ 256&    9.51E-07&   3.00&    1.13E-06&   3.00&    2.86E-06&   3.00\\
		\hline											
	\end{tabular}
\end{table}

	\begin{figure}[!htbp]
	\centering
	\includegraphics[width=0.45\textwidth]{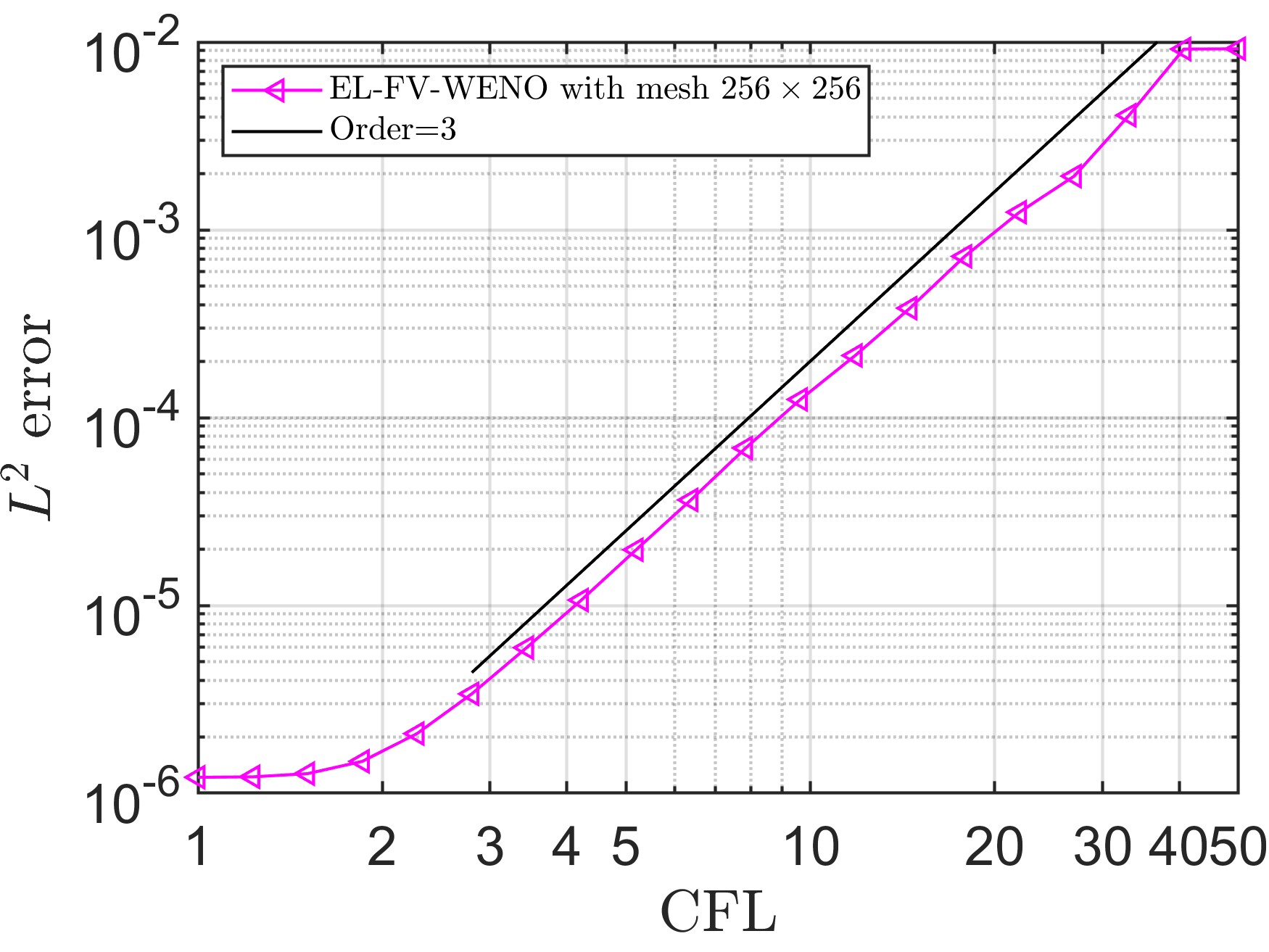}
	\caption{(Incompressible Navier-Stokes equations) Log-log plot of CFL numbers versus $L^2$ errors with fixed mesh $256\times256$ at $t = 0.5$ of the EL-RK-FV-WENO scheme.}\label{fig:IEIMEX_CFL_vs_L2error}
\end{figure}	

For a more complex scenario, we consider the incompressible Navier-Stokes equation \eqref{eq:INS} on $[0, 2\pi]^2$ with the following initial condition (the vortex patch problem)
\begin{equation}\label{eq:ini_vpp}
    \omega(x,y,0)=\begin{cases}
        -1,\quad\frac{\pi}{2}\leq x \leq\frac{3\pi}{2},~~\frac{\pi}{4}\leq y \leq\frac{3\pi}{4};\\
        1,\quad~~\frac{\pi}{2}\leq x \leq\frac{3\pi}{2},~~\frac{5\pi}{4}\leq y \leq\frac{7\pi}{4};\\
        0\quad~~~~ \text{otherwise}
    \end{cases}
\end{equation}
with zero boundary condition. We provide the mesh plot and contour plot of the numerical solution of the proposed scheme at $t = 5$ in \Cref{fig:VPP_meshplot}. The numerical result in \Cref{fig:VPP_meshplot} is comparable with the one in \cite{xiong2015high}.

\begin{figure}[!htbp]
	\centering
	\subfloat[Mesh plot]{
		\includegraphics[width=0.45\textwidth]{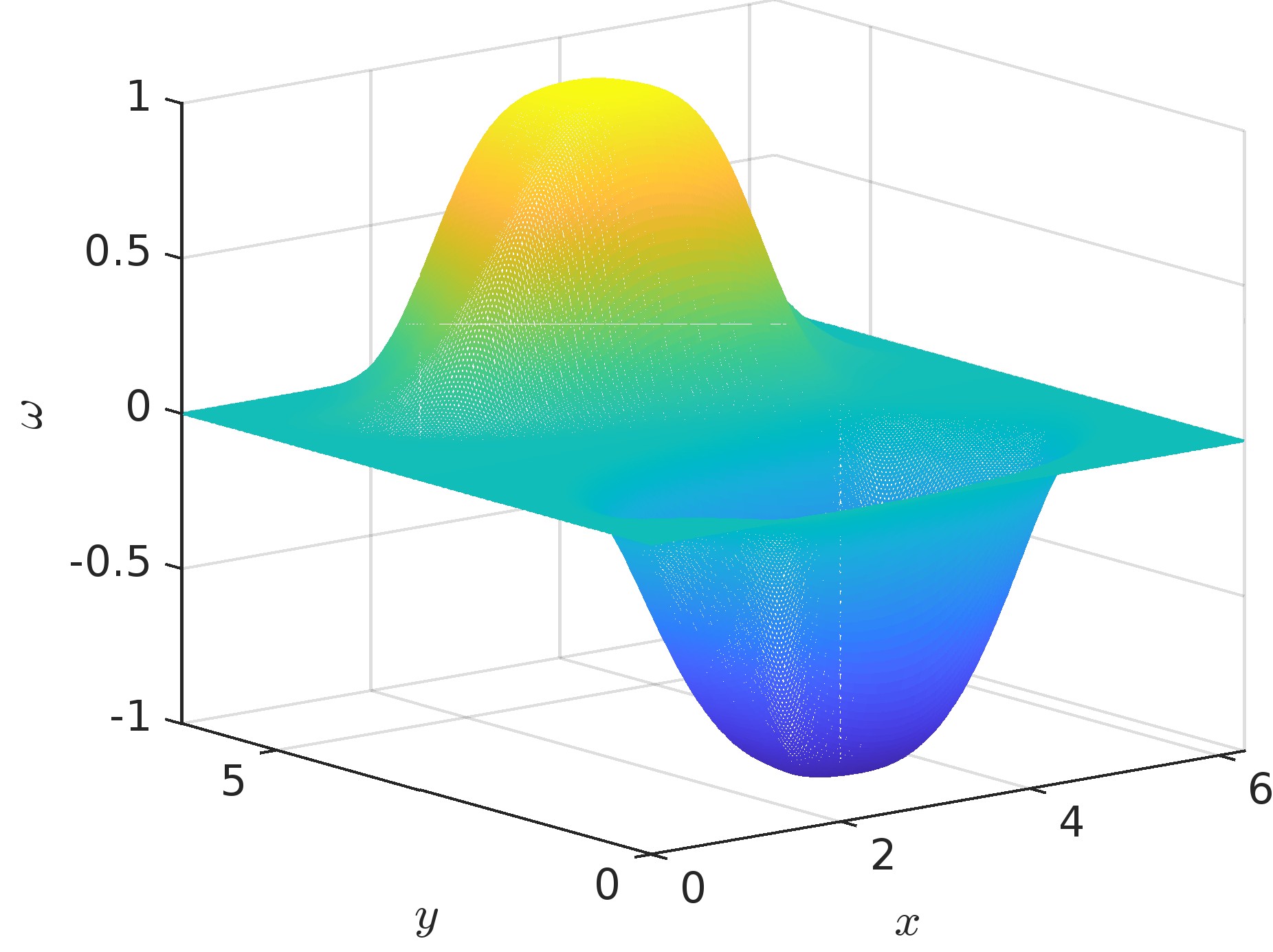}
	}
 	\subfloat[Contour plot]{
		\includegraphics[width=0.45\textwidth]{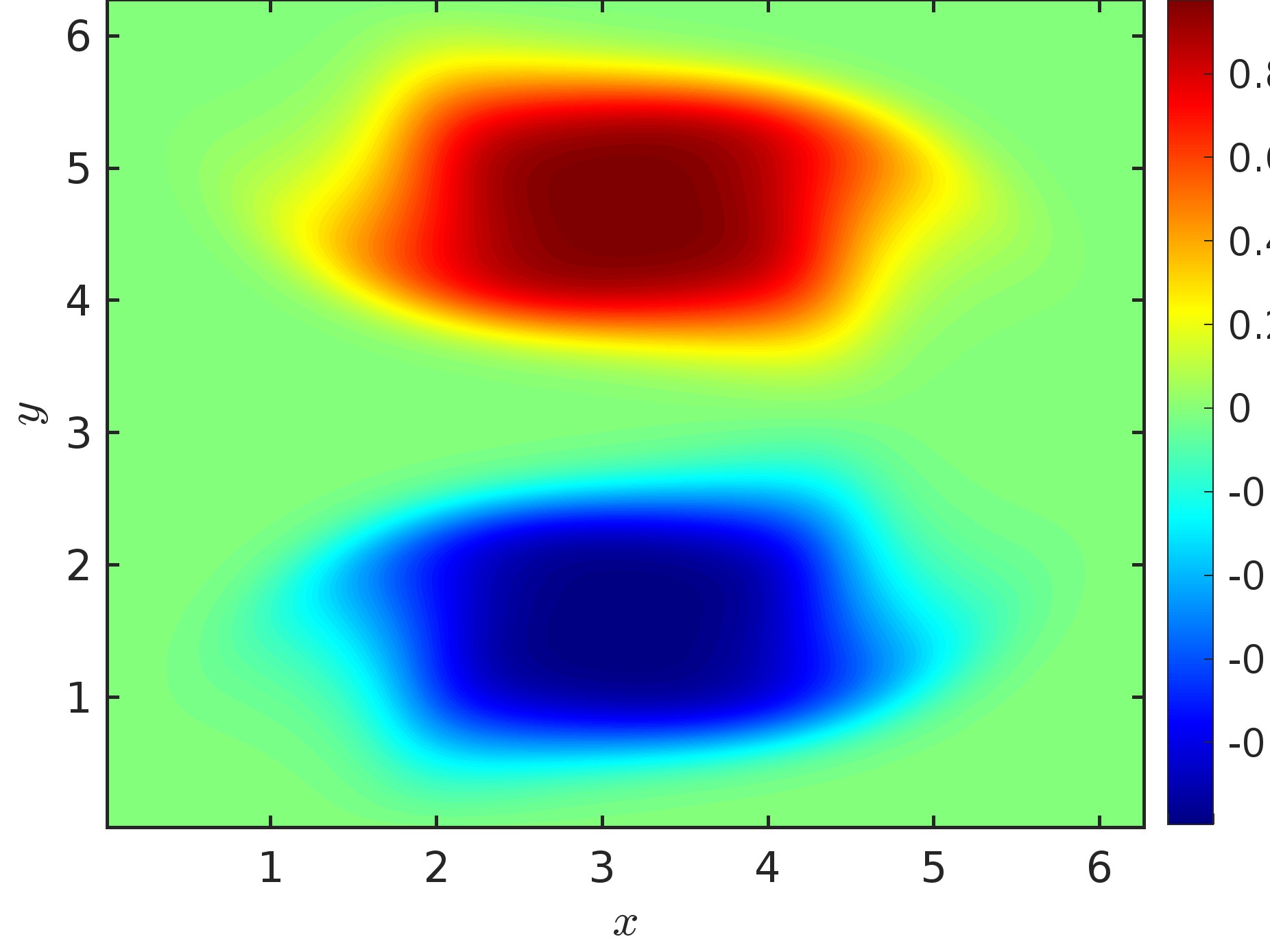}
	}
	\caption{(Vortex patch problem) Mesh plot and contour plot of the numerical solution of the EL-RK-FV-WENO scheme with CFL = 10.2 and with mesh $256\times256$ at $t=5$.}\label{fig:VPP_meshplot}
\end{figure}	

Reflecting on this example, we note that the proposed EL-RK-FV-WENO scheme is able
to simulate a nonlinear convection-diffusion equation with all the designed good properties. This
is one of the major reasons why this EL-RK-FV-WENO scheme is attractive compared with our
previous SL-FV-WENO scheme \cite{zheng_fourth-order_2022}.
\end{example}

\section{Conclusion}\label{sec:conclusion}
In this paper, we introduce a third-order EL-RK-FV-WENO scheme for convection-diffusion equations. By defining a modified velocity field and corresponding flux-form semi-discretization, we relax the time-step constraint. Spatial discretization is carefully designed to fit the EL formulation and overcome the challenges brought by the modified velocity field. Compared with the SL-FV scheme in \cite{zheng_fourth-order_2022}, the proposed EL-RK-FV-WENO scheme is capable of simulating nonlinear convection-diffusion equations while inheriting the ability to apply large time-steps. Extensive numerical tests are conducted, verifying the effectiveness of the proposed scheme.

\appendix

\section{Third-order WENO-ZQ reconstruction method for Eulerian mesh}\label{sec:WENO_ZQ_Eulerian}
In this appendix, we introduce the 3rd-order WENO-ZQ reconstruction for the Eulerian mesh. For convenience, we assume $\Delta x_i \equiv \Delta x$ and $\Delta y_j \equiv \Delta y$ for all $i,~j$. We define $\mu_i(x):=\frac{x-x_i}{\Delta x}$, $\nu_j(y):=\frac{y-y_j}{\Delta y}$ and introduce a set of local orthogonal polynomials as $\{P^{(i,j)}_l(x,y)\}$ for a given cell $I_{i,j}$:

\begin{equation}
    \begin{aligned}
        &P^{(i,j)}_1 := 1, \quad P^{(i,j)}_2 := \mu_i(x), \quad P^{(i,j)}_3 := \nu_j(y), \\
        &P^{(i,j)}_4 := \mu_i^2(x) - \frac{1}{12}, \quad P^{(i,j)}_5 := \mu_i(x)\nu_j(y), \quad P^{(i,j)}_6 := \nu_j^2(y) - \frac{1}{12}.
    \end{aligned}
\end{equation}

We assume that $\overline{u}_5 := \overline{u}_{i,j}$ and $I_5 := I_{i,j}$, while $\{u_s\}$ and $\{I_s\}$ represent corresponding cell averages and Eulerian cells based on the serial number in Figure \ref{fig:stencil}. The reconstruction procedure is performed as follows:
\begin{figure}[htb]
	\centering
	\begin{tikzpicture}
		
		\draw[black,thin] (0.5,0.5) node[left] {} -- (5,0.5) node[right]{};
		\draw[black,thin] (0.5,2) node[left] {} -- (5,2) node[right]{};
		\draw[black,thin] (0.5,3.5) node[left] {} -- (5,3.5) node[right]{};
		\draw[black,thin] (0.5,5) node[left] {} -- (5,5) node[right]{};
		
		\draw[black,thin] (0.5,0.5) node[left] {} -- (0.5,5) node[right]{};
		\draw[black,thin] (2,0.5) node[left] {} -- (2,5) node[right]{};
		\draw[black,thin] (3.5,0.5) node[left] {} -- (3.5,5) node[right]{};
		\draw[black,thin] (5,0.5) node[left] {} -- (5,5) node[right]{};
		
		\draw(0.5,0.5) node[above right=14pt] {$1$} -- (2,0.5) node[above right=14pt] {$2$};
		\draw(3.5,0.5) node[above right=14pt] {$3$} -- (3.5,0.5) node[above right=14pt] {};
		
		\draw(0.5,2) node[above right=14pt] {} -- (0.5,2) node[above right=14pt] {$4$};
		\draw(2,2) node[above right=14pt] {$5$} -- (3.5,2) node[above right=14pt] {$6$};
		
		\draw(0.5,3.5) node[above right=14pt] {$7$} -- (2,3.5) node[above right=14pt] {$8$};
		\draw(3.5,3.5) node[above right=14pt] {$9$} -- (3.5,3.5) node[above right=14pt] {};
		
		\draw(1.25,0) node[below] {$i-1$} -- (1.25,0) node[above right=14pt] {};
		\draw(1.25+1.5,0) node[below] {$i$} -- (1.25+1.5,0) node[above right=14pt] {};
		\draw(1.25+3,0) node[below] {$i+1$} -- (1.25+3,0) node[above right=14pt] {};
		
		\draw(0,-0.25+1.5) node[left] {$j-1$} -- (0,-0.25+1.5) node[above right=14pt] {};
		\draw(0,-0.25+3) node[left] {$j$} -- (0,-0.25+3) node[above right=14pt] {};
		\draw(0,-0.25+4.5) node[left] {$j+1$} -- (0,-0.25+4.5) node[above right=14pt] {};
		
	\end{tikzpicture}
\caption{Stencil for the 3rd-order WENO-ZQ reconstruction.}
\label{fig:stencil}
\end{figure}

\begin{enumerate}
	\item[\textbf{Step 1}] Construct a quadratic polynomial $q_0(x,y) = \sum\limits_{l=1}^{6}a_l^{q_0}P_l^{(i,j)}(x,y)$ using a special least-squares procedure. We define $$V:=\{p(x,y)\in P^2(I_{i,j})|\frac{1}{\Delta x\Delta y}\int_{I_s}p(x,y)dxdy = \overline{u}_s,~s=2,4,5,6,8\},$$
$$E(p(x,y)):=\left[\sum\limits_{s=1,3,7,9}\left(\frac{1}{\Delta x\Delta y}\int_{I_s}p(x,y)dxdy - \overline{u}_s\right)^2\right]^{\frac{1}{2}}.$$
 Then, we determine that $q_0(x,y)$ is the unique polynomial satisfying:

	\begin{equation}
		E(q_0(x,y))=\min\limits_{p\in V}{E(p(x,y))}.
	\end{equation}

	\item[\textbf{Step 2}] Construct eight linear polynomials $\{q_k(x,y)\}_{k=1}^8=\{\sum\limits_{l=1}^{3}a_l^{q_k}P^{(i,j)}_l(x,y)\}$ satisfying:

	\begin{equation}
		\frac{1}{\Delta x\Delta y}\iint_{I_5}q_k(x,y)dxdy=\overline{u}_5\quad \text{for}~~k=1,2,\ldots,8,
	\end{equation}
	and
	\begin{equation}
		\frac{1}{\Delta x\Delta y}\iint_{I_s}q_k(x,y)dxdy=\overline{u}_s,
	\end{equation}
	where
	\begin{equation*}
		\begin{split}
			 &s=1,2\quad\text{for}~~k=1;~s=2,3\quad\text{for}~~k=2; \\ &s=3,6\quad\text{for}~~k=3;~s=6,9\quad\text{for}~~k=4;\\
			&s=8,9\quad\text{for}~~k=5;~s=7,8\quad\text{for}~~k=6;  \\ &s=4,7\quad\text{for}~~k=7;~s=1,4\quad\text{for}~~k=8.\\
		\end{split}
	\end{equation*}
	
    \item[\textbf{Step 3}] Rewrite $q_0(x,y)$ as in \cite{levy_central_1999,levy2000compact,zhu_new_2016}:
    \begin{equation}\label{eq:rewrite_q0}
        q_0(x,y) = \gamma_0\left(\frac{1}{\gamma_0} q_0(x,y) - \sum\limits_{k=1}^8\frac{\gamma_k{\gamma_0}}q_k(x,y)\right)+\sum\limits_{k=1}^8\gamma_k q_k(x,y),
    \end{equation}
    where $\{\gamma_k\}_{k=0}^8$ is a set of positive linear weights with their sum being $1$. The linear weights control the balance between  optimal reconstruction accuracy and avoiding numerical oscillation. In our numerical tests, we set $\gamma_0=0.6$ and $\gamma_1=\ldots=\gamma_8=0.05$ for such balance.

    \item[\textbf{Step 4}] Compute the smoothness indicators of $\{q_k(x,y)\}_{k=0}^8$ \cite{jiang_efficient_1996}:
    \begin{equation*}\label{eq:smoothness_indicator_def}
        \begin{split}
            &\beta_0 = \frac{1}{\Delta x\Delta y}\sum\limits_{l_1+l_2\leq 2}\iint_{I_5}\left(\Delta x^{l_1}\Delta y^{l_2}\frac{\partial^{|l_1+l_2|}}{\partial_{l_1}\partial_{l_2}}q_0(x,y)\right)^2dxdy,\\
            &\beta_k = \frac{1}{\Delta x\Delta y}\sum\limits_{l_1+l_2\leq 1}\iint_{I_5}\left(\Delta x^{l_1}\Delta y^{l_2}\frac{\partial^{|l_1+l_2|}}{\partial_{l_1}\partial_{l_2}}q_k(x,y)\right)^2dxdy,   \text{for}~~k=1,\ldots,8.
        \end{split}
    \end{equation*}
    The explicit expressions of $\{\beta_k\}_{k=0}^8$ are given by
    \begin{equation}\label{eq:smoothness_indicator_expre}
        \begin{split}
            &\beta_0 = \left(a_2^{q_0}\right)^2+\left(a_3^{q_0}\right)^2+\frac{13}{3}\left(a_4^{q_0}\right)^2+\frac{7}{6}\left(a_5^{q_0}\right)^2+\frac{13}{3}\left(a_6^{q_0}\right)^2,\\
            &\beta_k = \left(a_2^{q_k}\right)^2+\left(a_3^{q_k}\right)^2\quad \text{for}~~k=1,2,\ldots,8.
        \end{split}
    \end{equation}

    \item[\textbf{Step 5}] Compute the nonlinear weights $\{\omega_{k}\}_{k=0}^8$ \cite{borges_improved_2008,zhu_new_2016}:
    \begin{equation}\label{eq:nonlinear_weights}
        \omega_k=\frac{\widetilde{\omega}_k}{\sum\limits_{l=0}^8\widetilde{\omega}_l},
    \end{equation}
    where
    \begin{equation}\label{eq:nonlinear_weights_tilde}
        \widetilde{\omega}_k=\gamma_k\left(1+\frac{\tau^{\frac{5}{4}}}{\beta_k+\epsilon}\right)\quad \text{for}~~k=0,1,\ldots,8
    \end{equation}
    with
    \begin{equation}\label{eq:tau}
        \tau=\frac{\sum\limits_{k=1}^8|\beta_0-\beta_k|}{8}.
    \end{equation}
    When the exact solution is smooth over the entire large stencil $\bigcup\limits_{s=1}^{9}I_s$, we can prove
    \begin{equation*}\label{eq:estimate_omega}
        \omega_k=\begin{cases}
            \gamma_k\left(1+O\left(\Delta^{\frac{3}{2}}\right)\right),\quad \text{if}~~Du|_{(x_i,y_j)}\neq 0~~ \text{and} ~~D^2u|_{(x_i,y_j)}\neq 0,\\
            \gamma_k\left(1+O\left(\Delta x\right)\right),\quad~~ \text{if}~~Du|_{(x_i,y_j)}= 0~~ \text{and} ~~D^2u|_{(x_i,y_j)}\neq 0,
        \end{cases}
    \end{equation*}
    by Taylor expansion.

    \item[\textbf{Step 6}] Construct the final reconstruction polynomial as follows:
    \begin{equation}\label{eq:final_construction_poly}
        u^{\text{WENO}}_{i,j}(x,y) = \omega_0\left(\frac{1}{\gamma_0} q_0(x,y)-\sum\limits_{k=1}^8\frac{\gamma_k}{\gamma_0}q_k(x,y)\right) + \sum\limits_{k=1}^8\omega_k q_k(x,y).
    \end{equation}
    \end{enumerate}

    Eventually, we define that $u^{\text{WENO}}(x,y)$ is the piecewise polynomial satisfying:
    \begin{equation}
        u^{\text{WENO}}(x,y) = u^{\text{WENO}}_{i,j}(x,y)\quad (x,y)\in I_{i,j}\quad\text{for all}~i,j.
    \end{equation}

\begin{remark}\label{remark:WENO_ZQ_accuracy}
    $u^{\text{WENO}}(x, y)$ offers a 3rd-order approximation to $u(x, y, t)$, provided $\{\overline{u}_{i,j}\}$ is sufficiently accurate. Assuming $\Delta x \sim \Delta y$, we can prove this as follows:
    \begin{align*}\label{eq:WENO_ZQ_accuracy}
        &u^{\text{WENO}}_{i,j}(x, y) - u(x, y, t) \nonumber\\
        &= \left(\gamma_0 + \omega_0 - \gamma_0\right) \left(\frac{1}{\gamma_0} q_0(x, y) - \sum_{k=1}^8 \frac{\gamma_k}{\gamma_0} q_k(x, y)\right) + \sum_{k=1}^8 \left(\gamma_k + \omega_k - \gamma_k\right) q_k(x, y) \nonumber\\
        &\quad - \left(\sum_{k=0}^{8} \gamma_k + \sum_{k=0}^{8} \left(\omega_k - \gamma_k\right)\right) u(x, y, t) \nonumber\\
        &=   \left(\omega_0 - \gamma_0\right) \Bigg(\frac{1}{\gamma_0} \left(q_0(x, y) - u(x, y, t)\right) - \sum_{k=1}^8 \frac{\gamma_k}{\gamma_0} \left(q_k(x, y) - u(x, y, t)\right)\Bigg) \nonumber\\
        &\quad +q_0(x, y) - u(x, y, t) + \sum_{k=1}^8 \left(\omega_k - \gamma_k\right) \left(q_k(x, y) - u(x, y, t)\right) \nonumber\\
        &= O(\Delta x^3) + O(\Delta x)\left(O(\Delta x^3) + O(\Delta x^2)\right) + O(\Delta x)O(\Delta x^2) \nonumber\\
        &= O(\Delta x^3), \quad \quad \text{for } (x,y)\in I_{i,j}.
    \end{align*}
\end{remark}

\bibliographystyle{siamplain}
\bibliography{Reference}

\begin{thebibliography}{10}

\bibitem{arbogast2010fully}
{\sc T.~Arbogast and C.-S. Huang}, {\em A fully conservative
  {Eulerian--Lagrangian} method for a convection--diffusion problem in a
  solenoidal field}, Journal of Computational Physics, 229 (2010),
  pp.~3415--3427.

\bibitem{ascher_implicit_explicit_1997}
{\sc U.~M. Ascher, S.~J. Ruuth, and R.~J. Spiteri}, {\em Implicit-explicit
  {Runge-Kutta} methods for time-dependent partial differential equations},
  Applied Numerical Mathematics, 25 (1997), pp.~151--167.

\bibitem{benitez2012numericalI}
{\sc M.~Ben{\'\i}tez and A.~Berm{\'u}dez}, {\em Numerical analysis of a second
  order pure {Lagrange--Galerkin} method for convection-diffusion problems.
  {Part} {I}: {Time} discretization}, SIAM Journal on Numerical Analysis, 50
  (2012), pp.~858--882.

\bibitem{benitez2012numericalII}
{\sc M.~Benitez and A.~Bermudez}, {\em Numerical analysis of a second order
  pure {Lagrange--Galerkin} method for convection-diffusion problems. {Part}
  {II}: {Fully} discretized scheme and numerical results}, SIAM Journal on
  Numerical Analysis, 50 (2012), pp.~2824--2844.

\bibitem{benitez2014pure}
{\sc M.~Benitez and A.~Bermudez}, {\em Pure {Lagrangian} and semi-{Lagrangian}
  finite element methods for the numerical solution of convection-diffusion
  problems.}, International Journal of Numerical Analysis \& Modeling, 11
  (2014).

\bibitem{borges_improved_2008}
{\sc R.~Borges, M.~Carmona, B.~Costa, and W.~S. Don}, {\em An improved weighted
  essentially non-oscillatory scheme for hyperbolic conservation laws}, Journal
  of Computational Physics, 227 (2008), pp.~3191--3211.

\bibitem{boulais_two-dimensional_2020}
{\sc E.~Boulais and T.~Gervais}, {\em {Two-dimensional convection-diffusion in
  multipolar flows with applications in microfluidics and groundwater flow}},
  Physics of Fluids, 32 (2020), p.~122001.

\bibitem{braescu2007arbitrary}
{\sc L.~Braescu and T.~F. George}, {\em Arbitrary {Lagrangian-Eulerian} method
  for coupled {Navier-Stokes} and convection-diffusion equations with moving
  boundaries}, Applied Mathematics for Science and Engineering,  (2007),
  pp.~31--36.

\bibitem{cai_eulerian-lagrangian_2021}
{\sc X.~Cai, J.-M. Qiu, and Y.~Yang}, {\em An {Eulerian-Lagrangian}
  discontinuous {Galerkin} method for transport problems and its application to
  nonlinear dynamics}, Journal of Computational Physics, 439 (2021), p.~110392.

\bibitem{chai_multiple-distribution-function_2022}
{\sc Z.~Chai, B.~Shi, and C.~Zhan}, {\em Multiple-distribution-function lattice
  {Boltzmann} method for convection-diffusion-system-based incompressible
  {Navier-Stokes} equations}, Physical Review E, 106 (2022), p.~055305.

\bibitem{cheng2007high}
{\sc J.~Cheng and C.-W. Shu}, {\em A high order {ENO} conservative {Lagrangian}
  type scheme for the compressible {Euler} equations}, Journal of Computational
  Physics, 227 (2007), pp.~1567--1596.

\bibitem{cheng_high_2008}
{\sc J.~Cheng and C.-W. Shu}, {\em A high order accurate conservative remapping
  method on staggered meshes}, Applied Numerical Mathematics, 58 (2008),
  pp.~1042--1060.

\bibitem{cockburn1998local}
{\sc B.~Cockburn and C.-W. Shu}, {\em The local discontinuous {Galerkin} method
  for time-dependent convection-diffusion systems}, SIAM journal on numerical
  analysis, 35 (1998), pp.~2440--2463.

\bibitem{cravero2019optimal}
{\sc I.~Cravero, M.~Semplice, and G.~Visconti}, {\em {Optimal definition of the
  nonlinear weights in multidimensional Central WENOZ reconstructions}}, SIAM
  Journal on Numerical Analysis, 57 (2019), pp.~2328--2358.

\bibitem{crouseilles_conservative_2010}
{\sc N.~Crouseilles, M.~Mehrenberger, and E.~Sonnendrücker}, {\em Conservative
  semi-{Lagrangian} schemes for {Vlasov} equations}, Journal of Computational
  Physics, 229 (2010), pp.~1927--1953.

\bibitem{ding_semi-lagrangian_2020}
{\sc M.~Ding, X.~Cai, W.~Guo, and J.-M. Qiu}, {\em A {semi-Lagrangian}
  discontinuous {Galerkin} ({DG}) – local {DG} method for solving
  convection-diffusion equations}, Journal of Computational Physics, 409
  (2020), p.~109295.

\bibitem{dumbser_arbitrary_2007}
{\sc M.~Dumbser and M.~Käser}, {\em Arbitrary high order non-oscillatory
  finite volume schemes on unstructured meshes for linear hyperbolic systems},
  Journal of Computational Physics, 221 (2007), pp.~693--723.

\bibitem{hidalgo2011ader}
{\sc A.~Hidalgo and M.~Dumbser}, {\em Ader schemes for nonlinear systems of
  stiff advection--diffusion--reaction equations}, Journal of Scientific
  Computing, 48 (2011), pp.~173--189.

\bibitem{jiang_efficient_1996}
{\sc G.-S. Jiang and C.-W. Shu}, {\em Efficient implementation of weighted
  {ENO} schemes}, Journal of Computational Physics, 126 (1996), pp.~202--228.

\bibitem{lauritzen_conservative_2010}
{\sc P.~H. Lauritzen, R.~D. Nair, and P.~A. Ullrich}, {\em A conservative
  semi-{Lagrangian} multi-tracer transport scheme ({CSLAM}) on the cubed-sphere
  grid}, Journal of Computational Physics, 229 (2010), pp.~1401--1424.

\bibitem{lee_multiscale_2016}
{\sc Y.~Lee and B.~Engquist}, {\em Multiscale numerical methods for passive
  advection–diffusion in incompressible turbulent flow fields}, Journal of
  Computational Physics, 317 (2016), pp.~33--46.

\bibitem{levy_central_1999}
{\sc D.~Levy, G.~Puppo, and G.~Russo}, {\em Central {WENO} schemes for
  hyperbolic systems of conservation laws}, {ESAIM}: Mathematical Modelling and
  Numerical Analysis, 33 (1999), pp.~547--571.

\bibitem{levy2000compact}
{\sc D.~Levy, G.~Puppo, and G.~Russo}, {\em {Compact central WENO schemes for
  multidimensional conservation laws}}, SIAM Journal on Scientific Computing,
  22 (2000), pp.~656--672.

\bibitem{mackenzie2012unconditionally}
{\sc J.~Mackenzie and W.~Mekwi}, {\em An unconditionally stable second-order
  accurate {ALE--FEM} scheme for two-dimensional convection--diffusion
  problems}, IMA Journal of Numerical Analysis, 32 (2012), pp.~888--905.

\bibitem{nakao2022eulerian}
{\sc J.~Nakao, J.~Chen, and J.-M. Qiu}, {\em An {Eulerian-Lagrangian}
  {Runge-Kutta} finite volume {(EL-RK-FV)} method for solving convection and
  convection-diffusion equations}, Journal of Computational Physics, 470
  (2022), p.~111589.

\bibitem{shoucri_two-level_1981}
{\sc M.~M. Shoucri}, {\em A two-level implicit scheme for the numerical
  solution of the linearized vorticity equation}, International Journal for
  Numerical Methods in Engineering, 17 (1981), pp.~1525--1538.

\bibitem{shu_essentially_1998}
{\sc C.-W. Shu}, {\em Essentially non-oscillatory and weighted essentially
  non-oscillatory schemes for hyperbolic conservation laws}, in Advanced
  Numerical Approximation of Nonlinear Hyperbolic Equations, Springer, 1998,
  pp.~325--432.

\bibitem{singh_advection_2013}
{\sc R.~N. Singh}, {\em Advection diffusion equation models in near-surface
  geophysical and environmental sciences}, Journal of Indian Geophysical Union,
  17 (2013), pp.~117--127.

\bibitem{smolarkiewicz_mpdata_1998}
{\sc P.~K. Smolarkiewicz and L.~G. Margolin}, {\em {MPDATA}: A
  finite-difference solver for geophysical flows}, Journal of Computational
  Physics, 140 (1998), pp.~459--480.

\bibitem{sorensen_computational_1999}
{\sc E.~N. Sorensen, G.~W. Burgreen, W.~R. Wagner, and J.~F. Antaki}, {\em
  Computational simulation of platelet deposition and activation: I. model
  development and properties}, Annals of Biomedical Engineering, 27 (1999),
  pp.~436--448.

\bibitem{spiegelman2006semi}
{\sc M.~Spiegelman and R.~F. Katz}, {\em A semi-{Lagrangian} {Crank-Nicolson}
  algorithm for the numerical solution of advection-diffusion problems},
  Geochemistry, Geophysics, Geosystems, 7 (2006).

\bibitem{wang1999ellam}
{\sc H.~Wang, H.~K. Dahle, R.~E. Ewing, M.~S. Espedal, R.~C. Sharpley, and
  S.~Man}, {\em An {ELLAM} scheme for advection-diffusion equations in two
  dimensions}, SIAM Journal on Scientific Computing, 20 (1999), pp.~2160--2194.

\bibitem{wang_impact_2019}
{\sc Z.~Wang and Y.~Wang}, {\em Impact of convection-diffusion and flow-path
  interactions on the dynamic evolution of microstructure: Arc erosion behavior
  of ag-sno2 contact materials}, Journal of Alloys and Compounds, 774 (2019),
  pp.~1046--1058.

\bibitem{xiong2015high}
{\sc T.~Xiong, J.-M. Qiu, and Z.~Xu}, {\em High order
  maximum-principle-preserving discontinuous {Galerkin} method for
  convection-diffusion equations}, SIAM Journal on Scientific Computing, 37
  (2015), pp.~A583--A608.

\bibitem{xiu2001semi}
{\sc D.~Xiu and G.~E. Karniadakis}, {\em A semi-{Lagrangian} high-order method
  for {Navier--Stokes} equations}, Journal of computational physics, 172
  (2001), pp.~658--684.

\bibitem{yang_generalized_2014}
{\sc X.~Yang, B.~Shi, and Z.~Chai}, {\em Generalized modification in the
  lattice {Bhatnagar-Gross-Krook} model for incompressible {Navier-Stokes}
  equations and convection-diffusion equations}, Physical Review E, 90 (2014),
  p.~013309.

\bibitem{zheng_fourth-order_2022}
{\sc N.~Zheng, X.~Cai, J.-M. Qiu, and J.~Qiu}, {\em A fourth-order conservative
  {semi-Lagrangian} finite volume {WENO} scheme without operator splitting for
  kinetic and fluid simulations}, Computer Methods in Applied Mechanics and
  Engineering, 395 (2022), p.~114973.

\bibitem{zhou2022arbitrary}
{\sc L.~Zhou and Y.~Xia}, {\em Arbitrary {Lagrangian--Eulerian} local
  discontinuous {Galerkin} method for linear convection--diffusion equations},
  Journal of Scientific Computing, 90 (2022), p.~21.

\bibitem{zhu_new_2016}
{\sc J.~Zhu and J.~Qiu}, {\em A new fifth order finite difference {WENO} scheme
  for solving hyperbolic conservation laws}, Journal of Computational Physics,
  318 (2016), pp.~110--121.

\end{thebibliography}
\end{document}